\def\M{{\mathcal M}}
\def\ds{\displaystyle}
\def\D{{\cal D}}
\def\N{{\cal M}}
\def\V{{\mathcal V}}
\def\W{{\mathcal W}}
\def\w{{\mathfrak{w}}}
\def\FW{{\mathfrak{W}}}
\def\M{{\mathcal M}}
\def\R{{\mathbb R}}
\def\implies{\Longrightarrow}
\def\T{{\cruk}}
\def\rr{\mathbb{R}}
\def\bel{\begin{equation}\label}
\def\eeq{\end{equation}}
\def\sqr#1#2{\vbox{\hrule height .#2pt
\hbox{\vrule width .#2pt height #1pt \kern #1pt
\vrule width .#2pt}\hrule height .#2pt }}
\def\square{\sqr74}
\def\bega{\begin{array}}
\def\enda{\end{array}}
\def\begi{\begin{itemize}}
\def\endi{\end{itemize}}
\def\cruk{{\mathfrak T}}
\newtheorem{thm}{Theorem}[section]
\newtheorem{cor}{Corollary}[section]
\newtheorem{lma}{Lemma}[section]
\newtheorem{prop}{Proposition}[section]
\newtheorem{remark}{Remark}[section]
\newtheorem{definition}{Definition}[section]
\title{
A geometrically based criterion\\ to avoid infimum-gaps in Optimal Control
 }
\author{Michele Palladino and Franco Rampazzo}
\begin{document}

\maketitle


\abstract{
 In optimal control theory the expression {\it infimum gap} means  a stricly negative  difference between  the infimum value of a given minimum problem  and  the infimum value of a new problem obtained by the former by {\it extending} the original family $\V$ of controls to a larger family  $\W$.   Now, for some classes  of domain-extensions 
 --like  convex relaxation or impulsive embedding of unbounded control problems-- the  {\it normality} of an extended minimizer  has been shown to be  sufficient for the avoidance of an {infimum gaps}. A natural issue is then the search of a general hypothesis under which the  criterium {\it `normality implies no gap'} holds true.  We prove   that, far from being a peculiarity of  those specific   extensions and from requiring the convexity of the extended dynamics,   this criterium  is valid provided the original family $\V$ of controls  is {\it abundant} in the extended family $\W$. %
 {\it Abundance}, which is stronger than  the mere $C^0$-density of the original trajectories  in the set of extended trajectories, is  a dynamical-topological  notion introduced  by J. Warga, and is  here utilized in a `non-convex'  version which, moreover, is adapted to differential manifolds. 
 To get  the main result, which is based on set separation arguments, we prove  an  open mapping result valid for Quasi-Differential-Quotient  (QDQ) approximating cones, a notion of `tangent cone' resulted as a peculiar specification of H. Sussmann's Approximate-Generalized-Differential-Quotients (AGDQ) approximating cone.

\section{Introduction}
 One of the main reason for enlarging the domain of a minimum problem relies on the  
 aim of establishing the existence of at least one solution.
Actually, domain extension is a quite common and variously motivated  practice,  in particular in the Calculus of Variations and in  Optimal Control. Of course, a  crucial requisite of such a domain enlargement consists in the {\it density } of the original problem in the new one: the extended minimum  should  be  approximable by processes of the original problem.
However, because of the presence of a final target,  even  a  dense extension of the  domain may result in  the occurrence of an  {\it infimum gap}: namely, it can happen that  the infimum value of the original problem is  strictly greater than the infimum value of the extended problem. This might be undesirable in many respects, for instance in the convergence  of numerical schemes  as well as in the identification of the value function via Hamilton-Jacobi equations.
This raises a natural question: how can one avoid this gap phenomenon?
 A sufficient condition for gap avoidance seems  to emerge from   investigations by J. Warga \cite{warga2, warga, warga1, warga3} and from some other more recent papers  \cite{MRV, PRCDC, PV1, PV2, PV3}, dealing with some particular cases: this criterion is the so-called  {\it normality } of minimizers. Therefore, the mentioned question can be turned into the following one:
 
 {\bf Q.}{\it  Under which hypotheses on a general optimal control problem  normality
is sufficient for gap-avoidance?}

In order be more precise,   let us briefly sketch the abstract setting  of  our optimal control problem. The {\it state} variable $y$ will  range  on a Riemannian manifold ${\mathcal M}$, while
 the control maps $v(\cdot)$ will   belong to  an {\it original} family $\V\subset\W:= L^1([0,S],\FW)$ ---where   $\FW$ is a subset of a metric space-- or to a   larger set $\W$, which  will be called the {\it extended} family of controls. 
Given an initial state $\bar y\in {\mathcal M}$ and a time interval $[0,S]$, we will consider the control system
\[
(E)\,\qquad\qquad
\left\{\begin{array}{l}\displaystyle \ds\frac{dy}{ds}(s)={f}(s, y(s),w(s))
\\y(0)=\bar y,
\end{array}\right. 
\]
and, for every $w\in\W$, we will use  $y[w]:[0,S]\to {\mathcal M}$ the corresponding (supposedly unique)   solution.
 The {\it original optimal control problem}  is defined as 
 \[
(P)_{\mathcal{V}}\,\qquad
\hbox{\it Minimize}\Big\{ \, h\big(y[v](S)\big) \quad |\quad
v\in\V,\quad y[v](S)\in \T \Big\},\quad 
\]
where  the  {\it cost function} $h:{\mathcal M}\to\rr$ is continuous, and
 $\T\subset{\mathcal M}$ is a closed set called  {\it target.} 

Replacing the family of controls $\V$  by the larger set  $\W$, one  obtains the  {\it  extended   optimal control problem:} 
  \[
(P)_{\mathcal{W}}\,\qquad
\hbox{\it Minimize}\Big\{ \, h\big(y[w](S)\big) \quad |\quad
w\in\W,\quad y[w](S)\in \T \Big\}.\quad 
\]

We will assume the existence of a local minimum   for the extended problem, namely  a control   $\hat w\in\W$ such that, for some $C^0$ neighbourhood $\mathcal{O}$ of $y[\hat w]$   , 
 $
 h(y[\hat w](S))\leq h(y[ w](S))
 $
 for all  $w\in\W$ such that  $y[w](S)\in\T$ and $y[w]\in \mathcal{O}$. The non-occurrence of infimum gaps means that  the original infimum value is unaffected by the introduction of the extended controls, namely 
 $$
h(y[\hat w](S))= \inf\Big\{  h(y[w](S)) \quad |\quad v\in \V,\quad y[v](S)\in\T,\ y[v]\in \mathcal{O} \Big\}
  $$
    for all sufficiently small neighbourhoods  $\mathcal{O}$ of $y[\hat w]$
  
 If, on the contrary,  there exists a  neighbourhood $\mathcal{O}$ such that  
$$
h(y[\hat w](S)) < \inf \Big\{  h(y[w](S)) \quad |\quad v\in \V,\quad y[v](S)\in\T,\  y[v]\in  \mathcal{O}\Big\},
$$ one says that  the optimal control  problem {\it  has an infimum gap at $y[\hat w]$.} Obviously, via the usual reductions, one can formulate a  notion of infimum gap for a general  Bolza problem --where an integral cost is involved as well-- .

For problems defined on Euclidean spaces and such that  the extended dynamics is convex, an insightfull investigation of the gap question and its relation with normality  was carried out by J.Warga (see e.g. \cite{warga}). More recently  two  specific classes of domain extentions  --still assuming the convexity of the extended dynamics-- have been studied in  \cite{MRV, PV1, PV2, PV3}. As mentioned above, these investigations share the fact that a certain condition turns out to be  necessary for  the gap occurrence:
 \vskip0.3truecm
 {\bf(A)} {\it There is an infimum-gap only  if the minimum of the extended problem is an abnormal extremal}.  \ \ \ \footnote{Equivalently: if the minimum is normal (=not abnormal) there is no gap.}
   \vskip0.3truecm  
   
Since any version of the PMP states that `an optimal process $(\hat y, \hat w):=(y[\hat w],\hat w)$ for the extended problem  $(P)_\W$ is an  extremal', in order for {\bf(A)} to have a precise meaning one has to specify  which version   of the   Pontryagin Maximum Principle (PMP) one  is considering. In turn, this is equivalent to specify which kind of {\it approximating cones} we are going to utilize for both the{ \it reachable set} and
the target $\cruk$. Actually, for this goal we shall introduce a generalized differential called {\it Quasi Differential Quotient} (QDQ)   (Def. \ref{qdq})  \footnote{A QDQ is a special case of Sussmann's Approximate Generalized Differential Quotient \cite{sus}.} and the associated notion of  QDQ {\it  approximating cone} (Def. \ref{ApprCone}). Let us say immediately that this choice is perhaps the most important step for the validity of the main result. And while it is impossible at this stage to give an exhaustive  description of what   QDQ  approximating cones are, let us 
point out that, on the one hand, they are {\it sufficiently small} for a   fixed point theorem to hold true and, on the other hand, they are {\it enough large} to allow the utilization of the notion of {\it abundance}, which, as we shall see, proves crucial for normality to imply no gap.

 This said, let us give the notion of {\it extremal}. For simplicity, we consider here only the case when the state ranges on a Euclidean space. Moreover, if $C\subset\R^n$ is a cone, we use $C^\perp$ to denote the {\it polar cone of $C$}, namely the set of linear  forms
 $\lambda\in({\R^n})
 ^*$ such that $\lambda\cdot c\leq 0$ for all $c\in C$.

  {\begin{definition}[Extremal]\label{extremalityn} Consider a control $\hat w\in \W$  and the corresponding trajectory $\hat y:=y[\hat w]$. Assume  that  $\hat y(S)\in\cruk$,
   and  let   ${C}$ be  a QDQ approximating cone    of the target $\cruk$ at $\hat{y}(S)$.
We say that the process $(\hat{y},\hat{w})$ is  an {\rm  extremal} (with respect to $h$ and  $C$)   if  there exist an absolutely continuous (adjoint) path
${\lambda}\in  W^{1,1}([0,S];\,(\R^n)^*)$
  and  a {\rm cost multiplier}  $\lambda_c\in\{0,1\}$ such that $ (\lambda
  ,\lambda_c)\neq 0$ and 
\begin{itemize}
\item[$(i)\;$]$\ds \frac{d{\lambda}}{ds} = -\lambda\cdot \frac{\partial f}{\partial x}(s,{\lambda}(s),\hat w(s))$
 \item[$(ii)\;$]  
 $\ds \max_{\w\in\FW}  {\lambda}(s)\cdot{f}(s,\hat{y}(s),\w)={\lambda}(s)\cdot{f}(s,\hat{y}(s),\hat{w}(s))$ \  \ 
 a.e. $s\in [0,S]$;
\item[$(iii)\;$]  $ {{\lambda}}(S)\in -\lambda_c \nabla h(\hat{y}(S))- {C}^{\perp}$.
 \end{itemize}
Furthermore, we say that  an extremal $(\hat y, \hat w)$ is {\rm normal} if for every  choice of the pair $({\lambda},\lambda_c)$ one has  $\lambda_c=1$.  We  say that  an extremal $(\hat y, \hat w)$ is {\rm abnormal} if it is not normal, namely, if exists  a choice of $(\lambda,\lambda_c)$ with $\lambda_c=0$.
\end{definition} 
   
 As mentioned above, in \cite{PV1, PV2, PV3}, where  the original set of controls  $\V$ was embedded in the   set $\W$ of {\it relaxed} controls, it has been shown the validity of criterion {\bf (A)}, that is: {\it if the optimal process $(\hat y, \hat w)$ is a {\it normal extremal}, then an infimum gap cannot occur at $\hat y$.} An akin  result has been achieved in \cite{MRV}, where the system is control-affine and the  original set $\V$ comprises {\it unbounded }controls  ranging in a convex cone.  In that case  a space-time, impulsive, extension is considered, namely the larger set of trajectories corresponding to $\W$ comprises  space-time paths  which are allowed to evolve along fixed time  directions.\footnote{It is well-known that under  commutativity hypotheses  these paths could be regarded as measure, while the measure-theoretical approach is unfit for  non-commutative  problems,
 see e.g. \cite{BR},\cite{MiRu}.}
 
It is worth noticing that   in both the investigated  cases the original set of trajectories  is {\it dense} in the set of  extended trajectories, when the latter is endowed with  $C^0$ topology. So, one might conjecture that criterion {\bf (A)} is still valid as soon as the trajectories corresponding to  $\V$ are  dense in the set of trajectories corresponding to $\W$. 
 In fact, {\it this is not the case} , as shown by the simple example  in  Appendix \ref{ex_suss}.

Hence, a condition stronger than density is needed. For this goal we introduce Kaskoz' version of J. Warga's notion of $\V$ being  {\it abundant}  in $\W$ (Def. \ref{abundant}).  This condition strengthens  density  by requiring  that the trajectories of the  extended system's convexification  are uniformly approached by trajectories of the original system.

We will further  extend  the notion of abundant subfamily $\V\subset\W$ to systems defined on manifolds and to  fairly general classes of controls (which are merely required to belong to a metric space).  Then, aiming to 
express normality of extended trajectories in geometric terms,  
 we invoke   local set separation of the target from the original reachable set. 
 
 A crucial result for the achievement of the main theorem consists in showing that,  with this notion of abundance, {\it  every needle-variational cone  $ \bf C$ at  $\hat y$ corresponding to the enlarged domain $\W$ is also a QDQ approximating cone to the original reachable set} (Theorem  \ref{apprpicc}). 
 
The next step consists in showing that the  local set separation of the target from the original reachable set implies the linear separability between a  QDQ approximating cone to the target and the above mentioned needle-variational cone $\bf C$ (Theorem \ref{GEOM}).  This is exactly the point where the choice of QDQ approximating  cones --rather then other more classical cones, e.g. Boltiansky cones-- plays essential. By expressing  this linear separation in terms of adjoint paths,  one finally gets the  main result of the paper (Corollary \ref{MP_INF_GAP}), where, under the abundance hypothesis, statement {\bf(A)} is turned into an actual theorem.
 
 In Section \ref{Ex_Imp} we apply the main theorem to nonlinear  systems  whose dynamics are neither bounded nor convex.
  Finally, since normality cannot be verified {\it a priori},  it is important to find sufficient conditions on the data guaranteeing that all minimizers are normal. This is what is provided by Theorem \ref{normsuff},  where a  directly  verifiable criterion for normality is proved to hold true in the general setting. }}

\subsection{Basic notions and notation}\label{basicss}

\subsubsection{Linear spaces, manifolds}

Let $E$ be a real linear space, and let us use $E^*$ to denote the algebraic dual of $E$.  If $\langle\cdot,\cdot\rangle$ is a given scalar product on $E$,\footnote{By {\it scalar product} we mean a  positive definite, symmetric,  bilinear form.} we will use $|\cdot|$
 to denote the norm associated with $\langle\cdot,\cdot\rangle$, namely, for every $e\in E$ we set   $
 |e|=\sqrt{{\langle e,e \rangle}}. 
 $
For every $e\in E $ and every real number $r\geq 0$ let us use $e+B_r$ to denote the closed ball of center $e$ and radius $r$, namely $e+B_r=\{e+f\ | \ |f|\leq r\}$.  When $e=0$ we will  write $B_r$ instead of $0+B_r$

  If $E_1,E_2$ are real  linear  spaces  and $L\in Lin(E_1,E_2)$,  we shall use 
$L\cdot e $ to denote the element of $E_2$ coinciding with the image of $e\in E_1$. 
We will use the symbol $\cdot$ also to mean  duality. Furthermore, if $\lambda\in E_2^*$ and and $L\in Lin(E_1,E_2)$, sometimes we will use the notation $\lambda \cdot L$
to mean {``\it the element of $E_1^*$ coinciding the image of $\lambda$ through the dual map of $L$.''}
While it doesn't generate any confusion, this promiscuous use of the  notation ``$\cdot$" makes the writing 
$\lambda\cdot L \cdot e$ unambiguous, for one has
$ (\lambda\cdot L) \cdot e = \lambda\cdot (L \cdot e)$ for all 
$(e,\lambda)\in E_1\times E_2^*$.


By saying that  $\Big({\mathcal M},{\langle\cdot,\cdot\rangle}\Big)$   is a  Riemannian   differentiable manifold we will mean that 
${\mathcal M}$ is a differential manifold and  $  \langle\cdot,\cdot\rangle$ is a Riemannan metric.  For every $x\in\N$ and $e,f\in T_x\N$, 
$ \langle e, f \rangle_x  $ will denote the corresponding  scalar product of $e,f$, and $| e|_x:=\sqrt{{\langle e,e \rangle}_x}$  will be called the  {\it norm} of $e$.
  We will often omit the subscript and we will  write $\langle e, f\rangle$ and $|e|$ instead of   $\langle e, f\rangle_x$ and $|e|_x$.
 
We will use $d$ to denote the distance induced on $\mathcal{M}$ by  $\langle\cdot,\cdot\rangle$. We recall that, if $x_1,x_2\in\mathcal{M}$,  the {\it distance}  $d(x_1,x_2)$ is  defined as the minimum among the $\langle\cdot,\cdot\rangle$-lenghts of the absolutely continuous  curves having $x_1,x_2$ as end-points.
For any $x\in\N$ and any $r\geq 0$, we will use $\mathcal{B}[x,r]$ to denote the closed  ball of radius $r$ and center $x$, i.e. $\mathcal{B}[x,r]:= \left\{ y\in\N \ |  \ d(x,y)\leq r\right\}$.


 \

\subsubsection{Cones}
Let $E$   be  a real  linear space. A subset $K\subset E$   is a {\it cone} if $\alpha k\in K$  for all $(\alpha,k)\in [0,+\infty[\times K$. If $A\subset E$ is any subset, we use $span^+A$ to denote the smallest convex cone containing $A$.  Let us introduce a notion of  {\it transversality} for cones.

The idea of a non-trivial intersection between cones, which plays essential in  set-separation results like Theorem \ref{teoteo} below,  is made  formal is made formal by  the following  notion of {\it tranversality}:

\begin{definition}\label{trancon}
Let $E$ be a linear space and let   $ K_1, K_2\subseteq E $ be convex cones. We say that\begin{enumerate}
\item $ K_1$ and $ K_2 $ are {\em transverse}, if $
K_1-K_2:=\big\{k_1-k_2 ,\ (k_1,k_2)\in  K_1\times K_2\big\} = E$ ;\\
\item  $ K_1$ and $ K_2$  are {\em strongly transverse},
if they are transverse  and $ K_{1}\cap K_{2} \supsetneq\{0\}$.
\end{enumerate}
\end{definition}

Transversality differs from strong transversality only when $ K_1$ and $K_2 $ are  complementary subspaces:
\begin{prop}\label{teo2} Let $E$ be a linear space, and let $ K_1 , K_2\subseteq E $ be convex cones. Then  $ K_1 , K_2 $ are transverse if and only if 
either  $ K_1 , K_2 $ are strongly transverse  or $ K_1 , K_2 $
are linear subspaces  such that $ K_1\oplus K_2=E$.
\end{prop}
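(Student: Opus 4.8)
The plan is to treat the two implications separately, the backward one being essentially a tautology and the forward one carrying all the content.

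For the ``if'' direction: if $K_1,K_2$ are strongly transverse then, by Definition \ref{trancon}, they are in particular transverse, and there is nothing to prove; if instead $K_1,K_2$ are linear subspaces with $K_1\oplus K_2=E$, then $-K_2=K_2$ gives $K_1-K_2=K_1+K_2=E$, so again they are transverse.

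For the ``only if'' direction, assume $K_1-K_2=E$. Since $0\in K_i$ always, transversality fails to be \emph{strong} precisely when $K_1\cap K_2=\{0\}$, so it suffices to prove that in this latter case $K_1$ and $K_2$ are complementary subspaces. First I would record two elementary facts about a convex cone $K$: it is closed under addition (for $x,y\in K$ one has $x+y=2\bigl(\frac{1}{2}x+\frac{1}{2}y\bigr)\in K$ by convexity and the cone property), and if moreover $K=-K$ then $K$ is a linear subspace. The core step is then the following: for an arbitrary $k_1\in K_1$, use $-k_1\in E=K_1-K_2$ to write $-k_1=a-b$ with $a\in K_1$ and $b\in K_2$; then $b=a+k_1\in K_1$ by closure under addition, hence $b\in K_1\cap K_2=\{0\}$, so $b=0$ and $-k_1=a\in K_1$. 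Thus $K_1=-K_1$ and $K_1$ is a subspace; negating $K_1-K_2=E$ yields $K_2-K_1=E$, and the symmetric argument shows $K_2$ is a subspace as well.

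Once both $K_i$ are subspaces, $-K_2=K_2$ gives $E=K_1-K_2=K_1+K_2$, and together with $K_1\cap K_2=\{0\}$ this is exactly $K_1\oplus K_2=E$, the second alternative. I do not expect a real obstacle here: the only points requiring care are the bookkeeping of which summand lies in which cone and the two elementary cone facts above, after which the argument is a short chain of inclusions. As a sanity check, the two alternatives in the conclusion are mutually exclusive — $K_1\oplus K_2=E$ forces $K_1\cap K_2=\{0\}$ and hence excludes strong transversality — which is consistent with the remark preceding the statement that transversality and strong transversality differ only for complementary subspaces.
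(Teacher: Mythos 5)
Your proof is correct and complete. The paper states Proposition \ref{teo2} without supplying a proof, evidently treating it as elementary, so there is no paper argument to compare against; your argument fills that gap cleanly. The decisive step — given $K_1-K_2=E$ and $K_1\cap K_2=\{0\}$, writing $-k_1=a-b$ with $a\in K_1$, $b\in K_2$, observing $b=a+k_1\in K_1$ by additive closure of the convex cone $K_1$, and hence $b=0$ — is exactly the right move, and the symmetry via $K_2-K_1=-(K_1-K_2)=E$ correctly transfers the conclusion to $K_2$. The two preliminary cone facts (additive closure from convexity plus positive homogeneity, and ``$K=-K$ implies subspace'') are accurately stated and genuinely needed, and the final assembly $E=K_1-K_2=K_1+K_2$ with trivial intersection gives the direct sum as claimed.
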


\begin{definition}\label{polar} Let $E$ be a finite-dimensional   linear space, and let $E^*$ be  its dual space.  For any subset  $A\subset E$, the (convex) cone 
$$
A^{\bot} \doteq \{ p\in E^* :\quad  p\cdot w \leq 0 \quad \forall \;
w\in A \}\subseteq E^*
$$
(where the symbol $\cdot$ denotes duality) will be called  the {\rm polar cone} of $A$. 
\end{definition}

The transversality of two cones  is  equivalent to their linearly separability. More precisely:
\begin{prop}\label{teo3}Two  convex cones $K_1$ and $K_2$ are not transverse  if and only if \linebreak $K_1^\bot\cap K_2^\bot \backslash \{0\} \neq \emptyset$ , namely there exists a  linear form $\lambda\neq 0$
 such that
  $$ \lambda \cdot k_1\geq 0 \  \forall k_1\in K_1
\quad \hbox{ and}\quad  \lambda \cdot k_2\leq 0 \  \forall k_2\in K_2 .$$
In this case one also says that $K_1$ and $K_2$ are {\rm linearly separable}.
\end{prop}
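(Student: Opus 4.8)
The plan is to prove the two implications of Proposition~\ref{teo3} separately, reducing everything to the elementary separation theory of convex cones in a finite–dimensional space. Throughout, set $C:=K_1-K_2=K_1+(-K_2)$. Since $K_1,K_2$ are (nonempty) convex cones, they contain $0$ (take $\alpha=0$ in the definition of cone), hence $C$ is again a convex cone containing $0$, with $K_1\subseteq C$ and $-K_2\subseteq C$; and by Definition~\ref{trancon} the pair $K_1,K_2$ fails to be transverse precisely when $C\subsetneq E$. I will also use Definition~\ref{polar} to rephrase the sign conditions: $\lambda\cdot k_1\ge 0$ for all $k_1\in K_1$ means $\lambda\in(-K_1)^\perp$, and $\lambda\cdot k_2\le 0$ for all $k_2\in K_2$ means $\lambda\in K_2^\perp$.

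For the easy implication, assume there is $\lambda\in E^*\setminus\{0\}$ with $\lambda\cdot k_1\ge 0$ on $K_1$ and $\lambda\cdot k_2\le 0$ on $K_2$. Then every $c=k_1-k_2\in C$ satisfies $\lambda\cdot c=\lambda\cdot k_1-\lambda\cdot k_2\ge 0$, so $C$ is contained in the proper closed half–space $\{e\in E:\lambda\cdot e\ge 0\}$; in particular $C\neq E$, i.e.\ $K_1,K_2$ are not transverse.

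For the converse, assume $C\subsetneq E$. The first and only delicate point is that passing to the closure does not destroy properness: since $E$ is finite–dimensional, a convex set whose closure is all of $E$ must coincide with $E$ (the relative interior of a nonempty convex set is nonempty and satisfies $\mathrm{ri}\,\overline C=\mathrm{ri}\,C$, so $\overline C=E$ would force $E=\mathrm{ri}\,\overline C=\mathrm{ri}\,C\subseteq C$). Hence $\overline C$ is a \emph{closed} convex cone with $\overline C\subsetneq E$. Pick $p_0\in E\setminus\overline C$ and apply the strong separation theorem for a point and a disjoint nonempty closed convex set: there exist $\lambda\in E^*\setminus\{0\}$ and $\alpha\in\R$ with $\lambda\cdot p_0<\alpha\le\lambda\cdot c$ for all $c\in\overline C$ (replace $\lambda$ by $-\lambda$ if necessary). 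Now exploit the cone structure: $0\in\overline C$ gives $\alpha\le 0$, while for any $c\in\overline C$ and $t>0$ one has $tc\in\overline C$, hence $\alpha\le t\,(\lambda\cdot c)$; letting $t\to+\infty$ forces $\lambda\cdot c\ge 0$ for every $c\in\overline C\supseteq C$. Specializing to $c=k_1=k_1-0\in C$ yields $\lambda\cdot k_1\ge 0$ on $K_1$, and to $c=-k_2=0-k_2\in C$ yields $\lambda\cdot k_2\le 0$ on $K_2$; thus $\lambda\in(-K_1)^\perp\cap K_2^\perp\setminus\{0\}$, which is exactly the asserted linear separability.

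The only genuinely substantive step, as opposed to routine bookkeeping, is the reduction to the closed cone $\overline C$ — the finite–dimensional fact that a proper convex cone has proper closure; once this is secured, the conclusion follows from the classical separation of a point from a closed convex set combined with the scaling invariance of cones. (An equivalent alternative is to split cases according to whether $\mathrm{span}\,C=E$: if not, any nonzero $\lambda$ annihilating $\mathrm{span}\,C$ works at once; if so, $C$ has nonempty interior and the same closure argument applies to $\mathrm{int}\,C$.) Proposition~\ref{teo2} is not needed for this argument.
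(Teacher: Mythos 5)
The paper states Proposition~\ref{teo3} without proof, so there is nothing in the source to compare your argument against; I am therefore judging it on its own. Your proof is correct and is essentially the standard separation argument. The easy direction (a separating $\lambda$ places $C=K_1-K_2$ in a proper half-space) is immediate, and your converse correctly handles the one non-trivial point: you justify that a proper convex set in finite dimensions has proper closure via $\mathrm{ri}\,\overline C=\mathrm{ri}\,C$, then strongly separate a point from $\overline C$ and use the scaling property of cones together with $0\in\overline C$ to upgrade the affine separation to the homogeneous one $\lambda\cdot c\ge 0$ on $C$. Restricting to $k_1=k_1-0$ and $-k_2=0-k_2$ then gives the claimed sign conditions.

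One small remark worth making explicit, which you handle correctly but silently: with the paper's Definition~\ref{polar} (polar defined by $p\cdot w\le 0$), the condition $\lambda\cdot k_1\ge 0$ on $K_1$ and $\lambda\cdot k_2\le 0$ on $K_2$ is $\lambda\in(-K_1)^\perp\cap K_2^\perp$, not $\lambda\in K_1^\perp\cap K_2^\perp$ as written in the proposition. The displayed formula (the \emph{namely} clause) is the correct characterization of non-transversality; the set-theoretic rewriting in the statement carries a sign slip. (Indeed $K_1^\perp\cap K_2^\perp\setminus\{0\}\ne\emptyset$ is equivalent to $K_1+K_2\ne E$, which is a different condition; e.g.\ for $K_1=K_2$ the first orthant in $\mathbb{R}^2$, $K_1-K_2=\mathbb{R}^2$ but $K_1+K_2\ne\mathbb{R}^2$.) You prove the substantively correct equivalence, which is what the paper uses downstream in Theorem~\ref{GEOM}.
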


\subsubsection{Scorza-Dragoni points}
\begin{definition}[Scorza-Dragoni point] Given a compact  set $X\subset \mathcal{M}$ and an interval $[a,b]\subseteq \rr$,  $a<b$, let us consider  a function $\varphi : [a,b]\times X\rightarrow \rr^{n}$ verifying 
\begin{itemize}
\item[i)] $[a,b]\ni s\mapsto \varphi(s,y)\in \rr^n$ is measurable for each $y\in X$;
\item[ii)] $ X \ni y\mapsto \varphi(s,y)$ is continuous for each $s\in [a,b]$,
\end{itemize}
 We say that $\bar{s}\in [a,b]$ is a Scorza-Dragoni point for $\varphi$ if, for all $y\in X$,
 \begin{equation}
 \lim_{r\rightarrow 0}\, \lim_{\delta\rightarrow 0}\frac{1}{\delta}\int_{\bar{s}}^{\bar{s}+\delta} \Lambda_{r}(s,y)\,ds =0
 \end{equation}
 where
 \begin{equation}
 \Lambda_{r}(s,y):=\sup_{x\in X, \; |x-y|\leq r}d\left( \varphi(s,x),\varphi(\bar{s},y) \right)
 \end{equation}

We shall use $\mathrm{SD}\left\{\varphi \right\}$ to denote the {\rm set of all the Scorza-Dragoni points for the function $\varphi$}. 
\end{definition}
Notice in particular  that, if  $s\in \mathrm{SD}\left\{\varphi \right\}$,  one has \begin{equation}\label{SD_4}
\lim_{x\rightarrow y,\, \delta\searrow 0} \varphi(s+\delta,x)=\varphi(s, y),
\end{equation}
for any  $y\in X$.
The importance of Scorza-Dragoni points relies on the fact they they form a full measure set \cite{S}:
 
 \begin{thm}[Scorza-Dragoni] The set of all the Scorza-Dragoni points { of a Caratheodory  function  $\varphi : [a,b]\times X\rightarrow \rr^{n}$} has  measure equal to  $b-a$.\end{thm}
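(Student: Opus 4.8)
The plan is to deduce this pointwise ``Scorza--Dragoni point'' statement from the classical Scorza--Dragoni (Lusin-type) theorem \cite{S} together with Lebesgue's differentiation theorem; the only extra ingredient is the standard integrable-domination part of the Carath\'eodory hypothesis, i.e. that $\mu(s):=\sup_{x\in X}|\varphi(s,x)|$ is integrable on $[a,b]$ --- without it the integrals defining $\Lambda_r(\cdot,y)$ need not be finite.

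\emph{Step 1: a regular skeleton.} Regard $s\mapsto\varphi(s,\cdot)$ as a map from $[a,b]$ into the separable metric space $C(X;\rr^n)$ ($X$ being compact metric). It is strongly measurable, since for $g\in C(X;\rr^n)$ the function $\|\varphi(s,\cdot)-g\|_\infty=\sup_{x\in D}|\varphi(s,x)-g(x)|$ is measurable, $D\subset X$ countable dense and $\varphi(s,\cdot)$ continuous. By Lusin's theorem --- equivalently the classical Scorza--Dragoni theorem --- there is an increasing sequence of compact sets $K_j\subseteq[a,b]$ with $\meas([a,b]\setminus K_j)\to 0$ such that $\varphi$ restricted to $K_j\times X$ is continuous, hence (compactness of $K_j\times X$) uniformly continuous with some modulus $\omega_j$. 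Set $D_\infty:=\bigcup_j K_j$, a set of full measure in $[a,b]$.

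\emph{Step 2: the exceptional null set.} Let $N\subset[a,b]$ be the union of: $[a,b]\setminus D_\infty$; the null set $\{\mu=+\infty\}$; for every $j$, the non-density points of $K_j$; and, for every $j$, the non-Lebesgue points of the integrable functions $\mu$ and $\mu\,\mathbf 1_{K_j}$. Being a countable union of null sets, $N$ is null, so $\meas([a,b]\setminus N)=b-a$; we claim every $\bar s\in D_\infty\setminus N$ is a Scorza--Dragoni point of $\varphi$. Fix such an $\bar s$, choose $j$ with $\bar s\in K_j$, and for $\delta,r>0$ and $y\in X$ split
\[
\frac1\delta\int_{\bar s}^{\bar s+\delta}\Lambda_r(s,y)\,ds=\frac1\delta\int_{[\bar s,\bar s+\delta]\cap K_j}\Lambda_r(s,y)\,ds+\frac1\delta\int_{[\bar s,\bar s+\delta]\setminus K_j}\Lambda_r(s,y)\,ds .
\]
On $K_j$, uniform continuity (and $\bar s\in K_j$) gives $\Lambda_r(s,y)\le\omega_j(\delta+r)$, so the first term is $\le\omega_j(\delta+r)$. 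For the second term use $\Lambda_r(s,y)\le\mu(s)+|\varphi(\bar s,y)|\le\mu(s)+\mu(\bar s)$, so it is bounded by $\frac1\delta\int_{[\bar s,\bar s+\delta]\setminus K_j}\mu(s)\,ds+\mu(\bar s)\,\delta^{-1}\meas([\bar s,\bar s+\delta]\setminus K_j)$: the last summand tends to $0$ because $\bar s$ is a density point of $K_j$, while the first equals $\frac1\delta\int_{\bar s}^{\bar s+\delta}\mu-\frac1\delta\int_{\bar s}^{\bar s+\delta}\mu\,\mathbf 1_{K_j}\to\mu(\bar s)-\mu(\bar s)=0$, since $\bar s$ is a Lebesgue point of both $\mu$ and $\mu\,\mathbf 1_{K_j}$. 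Hence $\limsup_{\delta\to0}\frac1\delta\int_{\bar s}^{\bar s+\delta}\Lambda_r(s,y)\,ds\le\omega_j(r)$ for every $r>0$, and $\omega_j(r)\to0$ as $r\to0$ yields the required double limit; every bound is independent of $y$, so the convergence is in fact uniform in $y\in X$.

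\emph{Main obstacle.} Everything is routine except the ``singular'' term $\frac1\delta\int_{[\bar s,\bar s+\delta]\setminus K_j}\Lambda_r$: off $K_j$ the function $\varphi$ is genuinely discontinuous and a priori unbounded, so to force this term to vanish one needs simultaneously the integrable domination $\mu\in L^1$ and the fact that the Lebesgue averages of $\mu$ over $[\bar s,\bar s+\delta]$ and over $[\bar s,\bar s+\delta]\cap K_j$ have the same limit $\mu(\bar s)$. Once this is handled, collapsing all the $K_j$'s into a single null exceptional set and checking uniformity in $y$ are bookkeeping.
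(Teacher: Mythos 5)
The paper does not actually prove this statement: it merely cites Scorza Dragoni's 1948 paper \cite{S}, which establishes the Lusin-type uniform-continuity-on-compacta theorem, and treats the pointwise ``full-measure set of Scorza--Dragoni points'' assertion as a known consequence. So there is no in-paper proof to compare against; what you have produced is the standard deduction of the pointwise statement from the classical one, and it is correct. The decomposition over $[\bar s,\bar s+\delta]\cap K_j$ and its complement, the control of the $K_j$-part by the modulus of uniform continuity, and the control of the complement by density plus Lebesgue points of $\mu$ and $\mu\mathbf 1_{K_j}$ are exactly the right tools, and the bookkeeping (countable union of null sets) is fine; the fact that your bound is uniform in $y\in X$ is a small bonus beyond what the definition requires.

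One substantive remark, which you flag correctly but which deserves emphasis: as literally stated in the paper, the hypotheses on $\varphi$ are only joint Carath\'eodory measurability/continuity, with no integrable bound $\mu(s)=\sup_{x\in X}|\varphi(s,x)|\in L^1$. Without such a bound the integrals $\int_{\bar s}^{\bar s+\delta}\Lambda_r(s,y)\,ds$ need not even be finite for any $\bar s$ (take $\varphi(s,y)=g(s)$ with $g$ measurable, finite a.e., but nowhere locally integrable), so the theorem is false as stated. In the paper's applications the bound is always supplied by hypothesis (SH)(ii) ($|f(s,y,\mathfrak w)|\le c(s)$ with $c\in L^1$), so the omission is harmless in context, but your proof is right to treat $\mu\in L^1$ as a genuine additional hypothesis rather than a formal reformulation. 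It might be worth stating this cleanly at the outset rather than parenthetically.
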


 \section{Set separation and open mappings}
\subsection{Quasi Differential Quotients}

 In order to state the  set-separation theorem (Th. \ref{teoteo}) we need the  notion of {\it Quasi Differential Quotients approximating cone} to a set $\mathcal{E}$ at a point of its boundary. For this purpose let us introduce the notion of Quasi Differential Quotient, which in turn is a particular case of Sussmann's  Approximate Generalized Differential Quotient \cite{sus}. The corresponding set-separation theorem is based on an Open Mapping  we prove below.

Let us  recall the notion of  Cellina continuously approximable set-valued function, which  is the  building block in the definition of Approximate Generalized Differential Quotient.
\begin{definition}[CCA]\label{cca}{Let  $F : \R^N \rightsquigarrow \R^n$   be a 
set-valued map. We say that $F$ is a { \rm  Cellina continuously approximable (CCA) set-valued map} if,  for any compact set $K\subset \R^N$,
\begin{itemize}
\item the restriction of $F$ on $K$ has compact graph, that is, the set $\mathrm{Gr}(F_{|_K}):=\{ (x,y)\in K\times \R^n :\; y\in F(x)\}$ is compact, and 
\item there exists a  sequence of single-valued, continuous maps $f_{k}:K\rightarrow \R^n$, $k\in\mathbb{N}$, such that the following condition holds:  for every open set $\Omega$ satisfying $\mathrm{Gr}(F_{|_K})\subset \Omega$, there exists $k_{\Omega}$ such that $\mathrm{Gr}\, (f_{k}):=\{ (x,y)\in K\times \R^n :\; y\in f_k(x)\} \subset \Omega$ for every $k\geq k_{\Omega}$.
\end{itemize}
}
\end{definition}

We will say that a function  $\rho : [0;+\infty[\to[0;+\infty] $  is a {\it a pseudo-modulus}  if it  is monotonically nondecreasing and
$\lim_{s\to 0^+}\rho(s) =\rho(0)= 0$. 
 We call {\it modulus} a  pseudo-modulus taking values in $[0,+\infty[$.

\begin{definition}[AGDQ]\label{agdq}
Assume that $F : \R^N \rightsquigarrow \R^n$       is a 
set-valued map, $(\bar \gamma,\bar y) \in \R^N\times\R^n  $,   $\Lambda\subset Lin\{ \R^N, \R^n\} $   is a compact set,  and $\Gamma\subset\R^N$  is any  subset.
We
say that $\Lambda$ is an {\em Approximate Generalized Differential Quotient  (AGDQ) of F at  $(\bar \gamma,\bar y)$ } in the direction of $\Gamma$  if there
exists a pseudo-modulus  $\rho$
having the property that
\begin{itemize}
\item[(*)]
 for every $\delta>0$ such that $\rho(\delta)<+\infty$, there exists  CCA  set-valued map \linebreak $A^\delta:{\left(\bar{\gamma}+B_{\delta}\cap\Gamma\right)}\to Lin\{ \R^N, \R^n\} \times \R^n$ 
  such that
 $$
 \inf_{L'\in\Lambda}|L - L'|\leq \rho(\delta), 
 \quad |h(\gamma)|\leq \delta \rho(\delta), \quad\hbox{and}\
  \bar y +  L\cdot(\gamma-\bar \gamma)  + h\in F(\gamma) \ \ \footnotemark
 $$\footnotetext{  Here $|\cdot|$ denotes the operator norm, namely $\displaystyle |M| = \sup_{|v|=1} |M\cdot v|$, for every linear operator $M\in Lin(\rr^N,\rr^n)$. }
whenever $\gamma
\in \bar\gamma+B(\delta)\cap\Gamma$ and $(L,h)\in A^\delta(\gamma)$.
 \end{itemize}
\end{definition}



{

We will use a subclass of AGDQs, which we call the  {\it Quasi Differential Quotients}. Their main property consists in the validity  of  an actual, {\it not punctured},  open mapping theorem (see Theorem \ref{om}below).

\begin{definition}[QDQ]\label{qdq}
Assume that $F : \R^N \rightsquigarrow \R^n$       is a 
set-valued map, $(\bar \gamma,\bar y) \in \R^N\times\R^n  $,   $\Lambda\subset Lin\{ \R^N, \R^n\} $   is a compact set,  and $\Gamma\subset\R^N$  is any  subset.
We
say that $\Lambda$ is a {\em Quasi Differential Quotient  (QDQ) of F at  $(\bar \gamma,\bar y)$ } in the direction of $\Gamma$  if there
exists modulus $\rho:[0,+\infty[\to [0,+\infty[$
having the property that
\begin{itemize}
\item[(*)]
 for every $\delta>0$  there exists a continuous map  
$(L_\delta,h_\delta):{\left(\bar{\gamma}+B_{\delta}\cap\Gamma\right)}\to Lin\{ \R^N, \R^n\} \times \R^n$ 
  such that
$$\min_{L'\in\Lambda}|L_\delta(\gamma) - L'|\leq \rho(\delta), 
 \quad |h_\delta(\gamma)|\leq \delta \rho(\delta), \quad\hbox{and}\
  \bar y +  L_\delta\cdot(\gamma-\bar \gamma)  + h_\delta(\gamma)\in F(\gamma),$$
whenever $\gamma
\in \bar\gamma+B_\delta\cap\Gamma$ .
 \end{itemize}
\end{definition}


\begin{definition}[AGDQ and QDQ on manifolds ]\label{agdqM}
Let $\mathcal{N}, \mathcal{M}$ be  $C^1$ Riemannian manifolds.  Assume that $\tilde F : \mathcal{N} \rightsquigarrow \M$       is a 
set-valued map, $(\bar \gamma,\bar y) \in \mathcal{N}\times\M  $,   $\tilde\Lambda\subset Lin\{ T_\gamma\mathcal{N}, T_y\M\} $   is a compact set,  and $\Gamma\subset\mathcal{N}$  is any  subset.
Moreover, let $\phi:U\to \R^N$ and $\psi:V\to\R^n$ be charts defined on neighbourhoods $U$ and $V$ of $\bar\gamma$ and $\bar{y}$, respectively, and assume that $\phi(\bar\gamma) = 0$, $\psi(\bar y)=0$. Consider the map 
$\psi\circ \tilde F\circ\phi^{-1}: \psi(U)\to\R^n$ and extend it arbitrarily to a map $F:\R^N\to\R^n$.
We
say that $\tilde\Lambda$ is an {\em Approximate Generalized Differential Quotient  (AGDQ) [resp. a Quasi Differential Quotient (QDQ)] of $\tilde F$ at  $(\bar \gamma,\bar y)$ } in the direction of $\tilde\Gamma$  if $\Lambda:= D\psi(\bar y) \circ \tilde \Lambda\circ D\phi^{-1}(0)$ is an  Approximate Generalized Differential Quotient  [resp. a Quasi Differential Quotient]  of $F$ at  $(0,0)$ in the direction of $\Gamma:=\phi(\tilde\Gamma\cap U)$. 
\end{definition}

As pointed out in \cite{sus},  this definition is intrinsic, that is, it is independent of the choice of the charts $\phi$ and $\psi$. 

\subsection{Open Mapping results}
\begin{thm}[Directional Open Mapping]\label{openth}
 Let $N,n$ be positive integers, and let $\Gamma$ be a convex cone in $\R^N$. Let
$F : \R^N \rightsquigarrow  \R^n$ be a set-valued map, and let $\Lambda$ be a AGDQ of $F$  at $(\bar\gamma,\bar y)$ in the direction of $\Gamma$. Let us assume that there is an element  $\bar w\in\R^n$ 
such that $\bar w\in Int(L\cdot\Gamma)$ for every $L\in\Lambda$. Then
 there exist a closed convex cone $D\subseteq\R^n$  and
positive constants $\alpha,\beta$ verifying $\bar w\in Int(D)$ and
  \bel{fixed} \bar y+\big(B_a\backslash \{0\}\cap D\big)\subset  F(\bar\gamma + B_{a\beta}\cap \Gamma) \quad \hbox{for }\; \hbox{all} \quad a\in ]0,\alpha].  \eeq
 
 \end{thm}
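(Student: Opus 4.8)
The plan is to reduce the directional open mapping theorem for an AGDQ $\Lambda$ to a fixed-point argument built on a degree-theoretic (or Brouwer-type) lemma applied to the continuous selections that come from the CCA maps $A^\delta$ furnished by the AGDQ definition. First I would fix a small $\delta>0$ with $\rho(\delta)<+\infty$ and extract a sequence $f_k$ of single-valued continuous maps $f_k : \bar\gamma + B_\delta\cap\Gamma \to Lin\{\R^N,\R^n\}\times\R^n$ that $C^0$-approximate the graph of $A^\delta$ in the sense of Definition \ref{cca}; writing $f_k(\gamma)=(L_k(\gamma),h_k(\gamma))$, the AGDQ estimates give $\mathrm{dist}(L_k(\gamma),\Lambda)\le 2\rho(\delta)$ and $|h_k(\gamma)|\le 2\delta\rho(\delta)$ for $k$ large, together with $\bar y + L_k(\gamma)\cdot(\gamma-\bar\gamma) + h_k(\gamma)$ lying in (a neighborhood of) $F(\gamma)$ — more precisely, by the CCA approximation, $\bar y + L_k(\gamma)\cdot(\gamma-\bar\gamma)+h_k(\gamma)$ is within a small error of a genuine point of $F(\gamma)$. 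The key structural input is the hypothesis $\bar w\in Int(L\cdot\Gamma)$ for \emph{every} $L\in\Lambda$: since $\Lambda$ is compact and $\Gamma$ is a convex cone, a compactness argument produces a single closed convex cone $D$ with $\bar w\in Int(D)$ and a uniform constant $\beta$ such that $D\cap B_a \subseteq L\cdot(\Gamma\cap B_{a\beta})$ for all $L\in\Lambda$ and all small $a$; this is the cone $D$ appearing in \eqref{fixed}.

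Next I would set up the fixed-point map. Given a target point $\bar y + z$ with $z\in D\cap B_a\setminus\{0\}$, I want to find $\gamma\in\bar\gamma+B_{a\beta}\cap\Gamma$ with $\bar y + L_k(\gamma)\cdot(\gamma-\bar\gamma)+h_k(\gamma)$ equal to (a point near) $\bar y+z$; solving the linearized equation $L\cdot(\gamma-\bar\gamma)=z$ for $L$ near $\Lambda$ is possible by the uniform surjectivity just established, and I would phrase the correction for the nonlinear error $h_k$ and the $L$-dependence as a self-map of a suitable compact convex subset of $\Gamma\cap B_{a\beta}$. Because $\rho(\delta)$ can be made as small as we like by shrinking $\delta$, and because $h_k$ contributes only $O(\delta\rho(\delta))$ while the scale of the problem is $a$, choosing $\delta$ small relative to $a$ (this is why $\beta$ and the ratio matter, and why one proves the inclusion for all $a\in\,]0,\alpha]$ with $\delta$ chosen accordingly) makes the map a contraction-like or at least a continuous self-map of a convex compact set, so Brouwer's fixed point theorem — or Sussmann's own degree argument for CCA maps, which is the right tool since $L_k(\gamma)$ need not be invertible and one cannot literally invert — yields a fixed point $\gamma_k$. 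Passing $k\to\infty$ and using that $\mathrm{Gr}(F_{|K})$ is compact (so the approximating graph points converge to a genuine point of the graph of $F$), one recovers $\gamma\in\bar\gamma+B_{a\beta}\cap\Gamma$ with $\bar y+z\in F(\gamma)$, which is \eqref{fixed}.

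The main obstacle is precisely the non-invertibility of the candidate derivatives $L\in\Lambda$: one cannot run a naive Newton/contraction scheme, so the heart of the argument is the topological fixed-point step for the set-valued, continuously-approximable perturbation — i.e. establishing that the map $z\mapsto\{\gamma : \bar y + A^\delta(\gamma)\text{-image hits }\bar y+z\}$ has the right covering/degree property. Concretely, I would isolate a lemma stating: if $g:\overline{B}\cap\Gamma\rightsquigarrow\R^n$ is CCA, $g(\gamma)\ni L(\gamma)\cdot(\gamma-\bar\gamma)+h(\gamma)$ with $L(\gamma)$ uniformly close to a compact set of operators each of which maps $\Gamma$ onto a cone containing $\bar w$ in its interior, and $|h|$ is small, then $g(\overline B\cap\Gamma)$ covers a fixed neighborhood of $\bar w\cdot(\text{scale})$ in $D$. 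Proving this lemma via topological degree for CCA maps (as in \cite{sus}) is the crux; everything else — the compactness extraction of $D$ and $\beta$, the bookkeeping of the moduli, the limit in $k$ — is routine. A secondary technical point worth care is that $\Gamma$ being a cone (not open) means all statements must be relative to $\Gamma$, and the interior $Int(L\cdot\Gamma)$ is taken in $\R^n$; the convexity of $\Gamma$ is what guarantees $L\cdot\Gamma$ is convex so that $Int$ behaves well and the covering is stable under the small perturbations.
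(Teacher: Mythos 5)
The paper does not actually prove Theorem \ref{openth}: it is stated, Corollary \ref{bucoth} is deduced from it, and the proof of Theorem \ref{om} takes its part (i) as known by the remark that a QDQ is an AGDQ. The result is inherited from Sussmann's AGDQ framework \cite{sus}. Your outline is compatible with that source: extract Cellina approximants of the CCA map $A^\delta$; use compactness of $\Lambda$ together with the hypothesis $\bar w\in Int(L\cdot\Gamma)$ for every $L\in\Lambda$ to manufacture a single cone $D$ and constant $\beta$; run a degree/covering argument at the scale of the target $z$; and pass to the limit using compactness of the graph of $F$ on compacta. You also correctly identify the two structural obstacles (the $L\in\Lambda$ need not be invertible, and $A^\delta$ is set-valued, so neither a Newton scheme nor a bare Brouwer argument applies) and the source of the puncture (at $z=0$ there is no scale against which to absorb the error $h_k$). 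However, the step you single out as ``the crux'' --- the degree lemma for CCA perturbations of a surjective family of linear maps restricted to a convex cone --- \emph{is} the theorem, and you defer its proof to \cite{sus}. So you have reproduced the standard reduction rather than supplied an independent argument; to be fair, the paper itself does no more for this particular statement.

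Where a real comparison is possible is against the paper's proof of the non-punctured Theorem \ref{om}, and that route is quite different from yours: because a QDQ furnishes single-valued continuous maps $(L_\delta,h_\delta)$ rather than a CCA set-valued $A^\delta$, the paper bypasses degree theory and applies Kakutani's fixed-point theorem to the multimap sending $\gamma$ to $\{-M\cdot h_\delta(\gamma)\}\cap\Gamma$, with $M$ ranging over the right inverses of $L_\delta(\gamma)$ and the Moore--Penrose pseudo-inverse guaranteeing nonemptiness. That argument is elementary but hinges on the QDQ structure; for a general AGDQ it is unavailable, which is why the degree route (and the puncture it forces) is indeed the correct shape for Theorem \ref{openth}, as your sketch recognizes.
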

%
%

If one  takes $\bar w =0 $  in the statement of Theorem \ref{openth},  the cone $D$  necessarily coincides with the whole $\R^n$. As a consequence, one  obtains  the following `punctured' Open Mapping Theorem. 
\begin{cor}[`Punctured' Open Mapping]\label{bucoth}\footnote{ The adjective {\it punctured} here refers to the fact  that $\bar y$ does not belong the image $F(\bar\gamma + B_{a\beta}{\cap \Gamma})$.}
Let $N,n$ be positive integers, and let $\Gamma$ be a convex cone in $\R^N$. Let
$F : \R^N \rightsquigarrow  \R^n$ be a set-valued map, and let $\Lambda$ be an QDQ of $F$  at $(\bar\gamma,\bar y)$ in the direction of $\Gamma$. 
Let us assume that $\Lambda$ is {\rm surjective}, by which we mean that $L\cdot\Gamma = \R^n$  for every $L\in\Lambda$. Then
 there are
positive constants $\alpha,\beta$ verifying
 \bel{fixedvor} \bar y+\big(B_a\backslash \{0\}\big)\subset  F(\bar\gamma + B_{a\beta}\cap \Gamma) \quad \hbox{for }\; \hbox{all} \quad a\in ]0,\alpha].  \eeq
%
%
\end{cor}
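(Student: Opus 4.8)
The plan is to obtain the statement as a direct corollary of the Directional Open Mapping Theorem \ref{openth}, exactly along the lines of the remark preceding it. The first point to record is that every QDQ is in particular an AGDQ: a finite-valued modulus is a pseudo-modulus, and a continuous single-valued map $(L_\delta,h_\delta)$ --- regarded as the set-valued map $\gamma\mapsto\{(L_\delta(\gamma),h_\delta(\gamma))\}$ --- is Cellina continuously approximable, being approximated by the constant sequence equal to itself (its graph over any compact set is compact by continuity). Since $\Lambda$ is compact, the minimum in Definition \ref{qdq} coincides with the infimum in Definition \ref{agdq}, so any $\Lambda$ meeting condition (*) of Definition \ref{qdq} also meets condition (*) of Definition \ref{agdq}. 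Hence the surjective QDQ $\Lambda$ of the hypothesis is in particular an AGDQ of $F$ at $(\bar\gamma,\bar y)$ in the direction of $\Gamma$.

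Next I would check that the surjectivity hypothesis is precisely what makes Theorem \ref{openth} applicable with the choice $\bar w=0$. As $\Gamma$ is a convex cone and each $L\in\Lambda$ is linear, $L\cdot\Gamma$ is a convex cone in $\R^n$; a convex cone contains the origin in its interior if and only if it is the whole space. Consequently the condition ``$L\cdot\Gamma=\R^n$ for every $L\in\Lambda$'' is the same as ``$0\in Int(L\cdot\Gamma)$ for every $L\in\Lambda$'', which is exactly the hypothesis of Theorem \ref{openth} with $\bar w=0$. Invoking that theorem I would then obtain a closed convex cone $D\subseteq\R^n$ and positive constants $\alpha,\beta$ with $0=\bar w\in Int(D)$ satisfying \eqref{fixed}, i.e.
\[
\bar y+\big(B_a\backslash\{0\}\cap D\big)\subset F(\bar\gamma+B_{a\beta}\cap\Gamma)\qquad\text{for all }a\in\,]0,\alpha].
\]

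Finally I would observe that a cone $D$ with $0\in Int(D)$ must be all of $\R^n$: it contains some ball $B_r$, $r>0$, hence, being stable under multiplication by nonnegative scalars, it contains $\bigcup_{t\ge 0}tB_r=\R^n$. Substituting $D=\R^n$ into the displayed inclusion yields $\bar y+(B_a\backslash\{0\})\subset F(\bar\gamma+B_{a\beta}\cap\Gamma)$ for all $a\in\,]0,\alpha]$, which is \eqref{fixedvor}. There is no genuine obstacle here: all the analytic substance --- the fixed-point/degree argument producing the open cone $D$ --- resides in Theorem \ref{openth}, which we take as given, and the only points that require a moment's thought are the equivalence between surjectivity of $\Lambda$ and ``$0\in Int(L\cdot\Gamma)$ for all $L$'' and the collapse of $D$ onto the full space. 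The one place worth double-checking carefully is the compatibility of the single-valued Definition \ref{qdq} with the CCA-based Definition \ref{agdq} (a routine compact-graph verification).
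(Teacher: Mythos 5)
Your argument is correct and is exactly the paper's route: the authors derive Corollary \ref{bucoth} in the sentence preceding it by taking $\bar w=0$ in Theorem \ref{openth} and observing that the resulting closed convex cone $D$ with $0\in Int(D)$ must be all of $\R^n$. You have merely spelled out the (elementary but worth recording) auxiliary facts that the paper leaves implicit --- that a QDQ is an AGDQ, and that for a convex cone the conditions ``$0$ is an interior point'' and ``equals $\R^n$'' coincide.
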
}

 If we replace AGDQ's with QDQ we get a real, non-punctured, open mapping result:
\begin{thm}[Open Mapping]\label{om}
Let $N,n$ be positive integers, and let $\Gamma$ be a convex cone in $\R^N$. Let
$F : \R^N \rightsquigarrow  \R^n$ be a set-valued map, and let $\Lambda$ be a GDQ of $F$  at $(\bar\gamma,\bar y)$ in the direction of $\Gamma$. 
As above, let us assume that $\Lambda$ is {\rm surjective}, by which we mean that $L\cdot\Gamma =  \rr^n$  for every $L\in\Lambda$. 
Then the following statements $(i), (ii)$ hold true:
 \begin{itemize}
 \item[$(i)$] 
 there are
positive constants $\alpha,\beta$  having the property that   
 \bel{fixedvorbis} \bar y+\big(B_a\backslash \{0\}\big)\subset  F(\bar\gamma + B_{a\beta}\cap \Gamma) \quad \hbox{for }\; \hbox{all} \quad a\in ]0,\alpha]; \eeq
   \item[$(ii)$] there exists  $\check\delta>0$ such that, for every $\delta\leq\check\delta$  and every    $(L_\delta,h_\delta)$ as in Definition \ref{qdq},  there exists $\gamma_\delta$  $\in\bar\gamma+\Gamma\cap B_{\delta}$ such that 

    \bel{superfix}\bar y =   L_{\delta}(\gamma_{\delta} )\cdot(\gamma_{\delta}- \bar\gamma)  + h_{\delta}(\gamma_{\delta} )\  \ \ \Big[\in F(\gamma_\delta)\Big].\eeq 
%
%
 \end{itemize} 
 In particular, by possibly reducing the size of $\alpha$, 
 one gets the open-mapping inclusions
$$\bar y + B_a \subset F(\bar\gamma + B_{a\beta}\cap \Gamma) \quad \hbox{for }\; \hbox{all} \quad a\in ]0,\alpha].$$ 


\end{thm}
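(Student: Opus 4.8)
The plan is to dispose of item $(i)$ at once and then concentrate on item $(ii)$, which carries the genuine content. Indeed $(i)$ is precisely Corollary \ref{bucoth} (which holds for any AGDQ, in particular for the QDQ $\Lambda$): under the surjectivity hypothesis the `punctured' open mapping applies verbatim and yields positive constants $\alpha,\beta$ with $\bar y+(B_a\setminus\{0\})\subset F(\bar\gamma+B_{a\beta}\cap\Gamma)$ for all $a\in\,]0,\alpha]$. What remains is to show that the value $\bar y$ itself is attained, and attained through the \emph{prescribed} continuous selection $(L_\delta,h_\delta)$ of Definition \ref{qdq}. Set $G_\delta(v):=L_\delta(\bar\gamma+v)\cdot v+h_\delta(\bar\gamma+v)$ for $v\in\Gamma\cap B_\delta$; this is continuous by the very structure of a QDQ, and $\bar y+G_\delta(v)\in F(\bar\gamma+v)$. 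Hence it suffices to produce, for every small $\delta$, a zero $v_\delta\in\Gamma\cap B_\delta$ of $G_\delta$: then $\gamma_\delta:=\bar\gamma+v_\delta$ satisfies $\bar y\in F(\gamma_\delta)$, which is $(ii)$, and combining with $(i)$ (taking $\delta=a\beta$ and shrinking $\alpha$ so that $a\beta\le\check\delta$, with $\check\delta$ as below) gives the final inclusion $\bar y+B_a\subset F(\bar\gamma+B_{a\beta}\cap\Gamma)$.

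The first step is a \emph{uniform openness} lemma for the compact family $\Lambda$ acting on the cone $\Gamma$. For each $L\in\Lambda$, surjectivity $L\cdot\Gamma=\R^n$ lets one pick finitely many vectors of $\Gamma$ whose $L$-images have $0$ in the interior of their convex hull (e.g. preimages in $\Gamma$ of $\pm$ the basis vectors). Since ``$0\in Int$ of the convex hull of finitely many points'' is an open condition on those points, hence on $L$, a finite subcover of $\Lambda$ furnishes a \emph{single} finite set $E=\{e_1,\dots,e_m\}\subset\Gamma$ and a constant $c_0>0$ with $B_{c_0}\subset \mathrm{conv}\{L\cdot e_1,\dots,L\cdot e_m\}$ for every $L\in\Lambda$. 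A routine separating-hyperplane estimate upgrades this to $B_{c_0/2}\subset\mathrm{conv}\{L'\cdot e_1,\dots,L'\cdot e_m\}$ for every $L'$ with $\min_{L\in\Lambda}|L'-L|\le\rho(\delta)$, provided $\rho(\delta)\,R<c_0/2$, where $R:=\max_i|e_i|$. Since $\mathrm{conv}(E)\subset\Gamma$ ($\Gamma$ being a convex cone) and, with $\epsilon:=\delta/R$, the compact convex set $\Sigma_\epsilon:=\epsilon\,\mathrm{conv}(E)$ lies in $\Gamma\cap B_\delta$, linearity gives, for all $\delta\le\check\delta$ (where $\rho(\check\delta)R\le c_0/2$) and all $v\in\Sigma_\epsilon$,
$$L_\delta(\bar\gamma+v)\cdot\Sigma_\epsilon=\epsilon\,\mathrm{conv}\{L_\delta(\bar\gamma+v)\cdot e_1,\dots,L_\delta(\bar\gamma+v)\cdot e_m\}\supset \epsilon\,B_{c_0/2}.$$

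The second step is a Brouwer fixed-point argument on $\Sigma_\epsilon$. Since $|h_\delta(\gamma)|\le\delta\rho(\delta)=\epsilon\,\rho(\delta)\,R\le\epsilon\,c_0/2$, for every $v\in\Sigma_\epsilon$ the vector $b(v):=-h_\delta(\bar\gamma+v)$ lies in $L_\delta(\bar\gamma+v)\cdot\Sigma_\epsilon$; hence the set-valued map $v\rightsquigarrow\{w\in\Sigma_\epsilon:\ L_\delta(\bar\gamma+v)\cdot w=b(v)\}$ has nonempty, closed and convex values, and the uniform openness just proved makes it lower semicontinuous. Michael's continuous selection theorem then provides a continuous $T:\Sigma_\epsilon\to\Sigma_\epsilon$ with $L_\delta(\bar\gamma+v)\cdot T(v)=b(v)$, and Brouwer's theorem yields a fixed point $v_\delta=T(v_\delta)$, i.e. $L_\delta(\bar\gamma+v_\delta)\cdot v_\delta+h_\delta(\bar\gamma+v_\delta)=0$, which is $G_\delta(v_\delta)=0$ as required, with $\gamma_\delta=\bar\gamma+v_\delta\in\bar\gamma+\Gamma\cap B_\delta$. (Equivalently one may replace Michael's theorem by a topological-degree computation on $\Sigma_\epsilon$, comparing $G_\delta$ with its linear part.)

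The step I expect to be the main obstacle is the uniform openness lemma: one must pass from the merely pointwise surjectivity $L\cdot\Gamma=\R^n$ on a cone --- where surjectivity of linear maps is \emph{not} an open condition --- to a quantitative bound uniform over the compact $\Lambda$ and stable under $O(\rho(\delta))$ perturbations. Compactness of $\Lambda$ together with the finite-generator trick are what make this work, and it is also this uniform estimate that secures the lower semicontinuity needed for Michael's theorem (or, in the degree approach, keeps $G_\delta$ off $0$ on $\partial\Sigma_\epsilon$). The remaining bookkeeping --- the interplay of $\epsilon,\check\delta,\alpha,\beta$ and the final ``in particular'' inclusion --- is routine.
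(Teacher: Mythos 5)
Your proof is correct, and its route is genuinely different from the paper's. The paper works on the (implicitly compact) domain $B_\delta\cap\Gamma$, builds the set-valued map $\Psi_\delta(\gamma)=\{-M\cdot h_\delta(\gamma)\ :\ M\ \text{a right inverse of}\ L_\delta(\gamma)\}\cap\Gamma$, proves non-emptiness by an explicit rank-one correction of the Moore--Penrose pseudo-inverse, and then invokes Kakutani (nonempty convex values, closed graph). You instead isolate a finitely generated sub-simplex $\Sigma_\epsilon=\epsilon\,\mathrm{conv}(E)\subset\Gamma\cap B_\delta$, prove a \emph{uniform} openness estimate $\epsilon B_{c_0/2}\subset L_\delta(\bar\gamma+v)\cdot\Sigma_\epsilon$ by a finite subcover of the compact set $\Lambda$ together with a stability-under-perturbation bound, and then find a zero of $G_\delta$ on $\Sigma_\epsilon$ via Michael selection plus Brouwer. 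What your route buys: $\Sigma_\epsilon$ is \emph{automatically} compact, being the convex hull of finitely many vectors of $\Gamma$, so you sidestep the closedness of $\Gamma$ that the paper's application of Kakutani to $B_\delta\cap\Gamma$ tacitly needs; the quantitative openness estimate also makes the self-mapping $\Psi_\delta(\Sigma_\epsilon)\subset\Sigma_\epsilon$-type inclusion transparent. What the paper's route buys: the construction of $M^\flat$ is self-contained pointwise algebra and does not require the finite-generator lemma. The one place you gloss is the lower semicontinuity of $v\rightsquigarrow\Phi(v):=\{w\in\Sigma_\epsilon:\ L_\delta(\bar\gamma+v)\cdot w=-h_\delta(\bar\gamma+v)\}$, which Michael's theorem requires; the clean justification is that, once $\delta$ is small enough that $\rho(\delta)R<c_0/2$ strictly, the point $-h_\delta(\bar\gamma+v)$ lies in the \emph{interior} of $\epsilon B_{c_0/2}\subset L_\delta(\bar\gamma+v)\cdot\Sigma_\epsilon$, hence $\Phi(v)$ meets the relative interior of $\Sigma_\epsilon$, and an implicit-function-type argument on the affine hull of $\Sigma_\epsilon$ yields lsc. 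Simpler still: $\Phi:\Sigma_\epsilon\rightsquigarrow\Sigma_\epsilon$ already has nonempty compact convex values and \emph{closed graph} (the defining relation is closed), so Kakutani applies directly to $\Phi$ and a fixed point $v\in\Phi(v)$ is precisely a zero of $G_\delta$; that variant keeps your compact simplex but uses the same fixed-point tool as the paper and dispenses with Michael and Brouwer altogether.
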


\begin{proof}
 Without loss of generality, we can assume $(\bar\gamma,\bar y)=(0,0)$ and,
since a QDQ is an AGDQ, it is sufficient  to prove only statement $(ii)$.  Namely, for every ${\delta}>0$ sufficiently small, we have to establish the existence of a $\gamma_{\delta} \in  B_{\delta}\cap\Gamma$ such that 
\bel{goal}0  =   L_{\delta}(\gamma_{\delta} )\cdot\gamma_{\delta}  + h_{\delta}(\gamma_{\delta} ).\eeq


 For every $\delta>0$,  let us define the set-valued map  $L^{{-1}_r}_\delta:
 B_\delta\cap\Gamma \rightsquigarrow Lin(\R^n,\R^N)$ by setting, for every $\gamma\in  B_\delta\cap\Gamma $,
$$
L^{{-1}_r}_\delta(\gamma):= \Big\{M\in Lin(\R^n,\R^N) , \ L_\delta(\gamma)\circ M  =Id_{\R^n}  \Big\}.
 $$
 
 Namely, {\it$L^{{-1}_r}_\delta(\gamma)$ is the set of  right inverse  of $L_\delta(\gamma)$}. Let us first observe that, for every $\gamma\in  B_\delta\cap\Gamma $,   and  $\delta$ sufficiently small, $L^{{-1}_r}_\delta(\gamma)$ is non-empty. Indeed, it contains  the  {\it  Moore-Penrose pseudo-inverse} $$M_\delta^\sharp(\gamma):= L_\delta^{tr}(\gamma)\circ\left(L_\delta(\gamma)\circ L_\delta^{tr}(\gamma)\right)^{-1}, $$ where $^{tr}$ denotes transposition. 
 Furthermore,  it is trivial to verify that
{\it the set-valued map $L^{{-1}_r}_\delta$ is convex-valued. } Finally,
by possibly reducing the size of $\check\delta$, for every $\delta\in[0,\check\delta]$, {\it the set-valued map $L^{{-1}_r}_\delta$ has compact graph.} 
Indeed, there exist a constant $ K >0$ such that $\Lambda^{\rho({\delta})}$ is a compact subset  made of linear operators whose { right inverse} are bounded  (in the operator norm) by  $K$. Moreover, let us consider a sequence $(\gamma_m)_{m\in\mathbb{N}}\subset B_\delta\cap\Gamma$ converging to 
$\tilde \gamma\in B_\delta\cap\Gamma$, and, for every $m\in\mathbb{N}$, let us choose
 $M_m\in L^{{-1}_r}_\delta(\gamma_m)$. Hence, one has that
{ $L_\delta(\gamma_m) \circ M_m = Id_{\R^n}$}  and, since the sequence $(M_m)$ ranges in a compact set, there exists a subsequence  $(M_{m_k})$ converging to a linear operator $\tilde M$.  In particular, 

$$   L_\delta(\tilde \gamma)\circ \tilde M=\ds\lim_{k\to\infty} \big( L_\delta(\gamma_{m_k})\circ M_{m_k} \big) = Id_{\R^n},$$ 
so that  $
\tilde M\in L^{{-1}_r}_\delta(\tilde\gamma)$. This proves that the set-valued map
 $\gamma\mapsto L^{{-1}_r}_\delta(\gamma)$ has compact graph. 

Now consider the set-valued map $\Psi_\delta:
 B_\delta\cap \Gamma \rightsquigarrow \R^N$ defined by setting
$$
\Psi_\delta(\gamma):=\Big\{-M\cdot  h_\delta(\gamma)\ |\quad M\in L^{{-1}_r}_\delta(\gamma) \Big\}{\bigcap \Gamma}, \quad   \gamma\in B_\delta\cap \Gamma.
$$
To prove that this map has non-empty values for every $\gamma\in B_\delta\cap \Gamma$, it is sufficient  to determine a linear mapping  $M^\flat:\rr^n\to\rr^N$ and an element $v\in \Gamma$ such that 
\bel{mstar}
(L_\delta(\gamma)\circ M^\flat)\cdot w = w\quad \forall w\in\rr^n \ \ \Big(\iff  M^\flat\in  L^{{-1}_r}_\delta(\gamma)\Big) , \qquad -M^\flat\cdot h_\delta (\gamma) =v 
\eeq Fix $\gamma\in  B_\delta\cap \Gamma$ and choose $v\in\Gamma$  verifying  $L_\delta(\gamma)\cdot v=- h_\delta (\gamma)$. Such a $v$ exists, since   $L_\delta(\gamma)$ is surjective. 
Now, a geometrical intuition suggests that $M^\flat$ might be obtained by adding a suitable linear operator to an element of $L^{{-1}_r}_\delta(\gamma) $, for instance the linear operator $M^\sharp$. Actually, following \cite{sus}, if $\langle \cdot, \cdot\rangle$ is any scalar product  on $\rr^n$,
we  define the linear map $M^\flat:\rr^n\to\rr^N$ by setting, for every $w\in\R^n$,
$$M^\flat\cdot w:= M^\sharp\cdot w - \frac{\langle w, h_\delta (\gamma)\rangle}{ \langle  h_\delta (\gamma), h_\delta (\gamma)\rangle} \left( v - M^\sharp\cdot  h_\delta (\gamma)\right).
$$
It is straightforward to verify that $M^\flat$ verifies conditions \eqref{mstar}, so that $\Psi_\delta(\gamma)$ is not empty.


 Since for every $\delta$ the map $h_\delta$ is continuous and $\|h_\delta\|\leq \delta\rho(\delta)$, by possibly reducing the size of
$\check\delta$ we conclude   that, for every $\delta\in [0,\check\delta]$, the set-valued map $\Psi_\delta$ verifies $\Psi_{\delta}( B_{\delta}\cap \Gamma)\subset \bar B_{\delta}{\cap \Gamma}
$ and has non-empty, convex values, and  a closed graph. Since the domain of $\Psi_{\delta}$ is compact and convex,  the set-valued map $\Psi_{\delta}$ verifies the hypotheses of the Kakutani fixed point theorem, so that there exists $\gamma_{\delta}\in B_\delta\cap \Gamma$ such that 
$
\gamma_{\delta} \in   \Psi_{\delta}(\gamma_{\delta}).
$
It follows that there is a matrix { $M\in L^{{-1}_r}_\delta(\gamma_{\delta})$ 
 such that  $0= \gamma_{\delta} +M\cdot h_{\delta}(\gamma_{\delta}).$} Therefore, one gets 
 $$0= L _{\delta}(\gamma_{\delta})\cdot \Big(\gamma_{\delta} +M\cdot h_{\delta}(\gamma_{\delta})\Big)= L _{\delta}(\gamma_{\delta})\cdot \gamma_{\delta} +h_{\delta}(\gamma_{\delta}),$$
which concludes the proof. 

\end{proof}

\subsection{QDQ approximating cones and set separation}

Assume that $\M$ is a $C^1$ differentiable manifold , $\mathcal E\subset\M$, and $z\in \mathcal E$. If $X$ is a linear space, let us call {\it convex multicone} in  $X$ any family of convex cones of $X$.

\begin{definition}\label{ApprCone} An {\em  AGDQ approximating multicone  [}resp. {a
\em  QDQ approximating multicone]} to $\mathcal E$ at $z$ is a
convex multicone ${\mathcal C}\subseteq T_{z}\M$ such that there   exist a non-negative integer   $N$, a set-valued
map $F : \R^N  \rightsquigarrow \M$, a convex cone $\Gamma\subset\R^N$, and an AGDQ  {\em[}resp. a {\em QDQ] }$\Lambda$ of F at  $(0,z)$ in the direction of $\Gamma$
such that $F(\Gamma)\subset \mathcal E$ and ${\mathcal C} =
\{L\cdot\Gamma\  : L\in\Lambda\}$.

In  the particular case when an AGDQ approximating multicone   {\em  [ resp. a
QDQ approximating multicone]} is a singleton, namely $\Lambda=\{L\}$ for some $L\in Lin(\R^N,\R^n)$, we say  that $C:=L\cdot\Gamma$ is   an {\em AGDQ approximating cone}   {\em [ resp. a
  QDQ approximating cone]}  to $\mathcal E$ at $z$.

\end{definition}

Let us introduce the notion of local set-separation:
\begin{definition} Let  $\mathcal{X}$ be a topological space, and let us consider two subsets ${\mathcal A}_1,{ \mathcal A}_2\subset \mathcal{X} $ and a point   $z\in  {\mathcal A}_1\cap {\mathcal A}_2$. We say that ${\mathcal A}_1$ and ${ \mathcal A}_2$ are {\em  locally separated} at $z$ provided there exists a neighborhood $V$ of $z$ such that
$$
{\mathcal A}_1\cap{ \mathcal A}_2\cap V = \{z\}.
$$
\end{definition}
\vskip0.4truecm

We   are  now  ready to state  our  set-separation result, which connects set separation with the linear separability of 
QDQ approximating cones. Furthermore the result includes a special property in the case when the approximating cones are complementary linear subspaces.

\begin{thm}[Set separation]\label{teoteo}
Let  $ {\mathcal E}_1, {\mathcal E}_2 $ be subsets of  $\N $, and 
  let
  $ z\in {\mathcal E}_1\cap {\mathcal E}_2 $
 Assume that  $ C_1$, $C_2$ are   AGDQ approximating cones  of  $ {\mathcal E}_1 $ and $ {\mathcal E}_2 $, respectively, at $z$. 
 \begin{itemize}
 \item[i)] If  $ C_1$ and $ C_2$ are strongly transverse,
then  the sets ${\mathcal E}_1$ and ${\mathcal E}_2$
are not locally separated. 
\item[ii)] If moreover:   \begin{enumerate}
\item $ C_1$, $C_2$ are  QDQ cones,
 \item $C_1$ and $C_2$ are complementary  linear subspaces, i.e.    ${C}_1\oplus{C}_2=T_{z}\M$,
  \item for each $i=1,2$,  $\Gamma_i\subset\R^{N_i}$ is a convex cone, $F_i:\R^{N_i}\rightsquigarrow \M$ is a set-valued map,   and  $\Lambda_i=\{L^i\}\in Lin(\R^{N_i},T_{z}\M)$  is a 
QDQ of $F_i$ at $(0,z)$ in the direction of $\Gamma_i$, $F_i(\Gamma_i)\subseteq \mathcal{E}_i$, and $ C_i = L^i\cdot \Gamma_i$, 

 \end{enumerate}
   then there exists a sequence $(\gamma_{1_k},\gamma_{2_k})\in \Gamma_1\times\Gamma_2$
 such that $z_k\in F_1(\gamma_{1_k})\cap  F_2(\gamma_{2_k})$ and 
   $z_k\to z$.

   
   

\end{itemize}
\end{thm}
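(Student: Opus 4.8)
\textbf{Proof strategy for Theorem \ref{teoteo}.}

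The plan is to reduce everything to local coordinates around $z$ and then apply the Open Mapping Theorem (Theorem \ref{om}) together with the set-separation machinery.  Fix a chart $\psi:V\to\R^n$ with $\psi(z)=0$; by Definition \ref{agdqM} we may work in $\R^n$ and assume the $L^i$ are honest linear maps $L^i\in Lin(\R^{N_i},\R^n)$ with QDQ data $(L^i_\delta,h^i_\delta)$ and moduli $\rho_i$.  For part (i), the classical AGDQ set-separation result (which follows from Corollary \ref{bucoth}) gives the conclusion: if $C_1$ and $C_2$ are strongly transverse, form the product map $F(\gamma_1,\gamma_2):=F_1(\gamma_1)-z - (F_2(\gamma_2)-z)$ with AGDQ $\Lambda$ whose elements are $(L_1,-L_2)$ and direction cone $\Gamma_1\times\Gamma_2$; strong transversality means $0\in\mathrm{Int}(L_1\cdot\Gamma_1 - L_2\cdot\Gamma_2)$ for every such element, but since $C_1\cap C_2\supsetneq\{0\}$ one can pick $\bar w$ in that intersection and the directional open mapping produces points of $F_1$ and $F_2$ arbitrarily close to $z$ but distinct from $z$, hence $\mathcal E_1,\mathcal E_2$ are not locally separated.

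The substance of the theorem is part (ii), and the key observation is that when $C_1\oplus C_2 = \R^n$ the combined map is \emph{surjective} in the sense of Theorem \ref{om}, so the \emph{non-punctured} open mapping applies and yields $z$ itself in the image.  Concretely, I would set $N:=N_1+N_2$, $\Gamma:=\Gamma_1\times\Gamma_2\subset\R^N$, and define $F:\R^N\rightsquigarrow\R^n$ by $F(\gamma_1,\gamma_2) := \bigl(F_1(\gamma_1)-z\bigr)-\bigl(F_2(\gamma_2)-z\bigr)$.  One checks directly from Definition \ref{qdq} that $\Lambda:=\{L\}$ with $L\cdot(\gamma_1,\gamma_2):= L^1\cdot\gamma_1 - L^2\cdot\gamma_2$ is a QDQ of $F$ at $(0,0)$ in the direction of $\Gamma$: the continuous selections are $L_\delta(\gamma_1,\gamma_2)\cdot(\cdot,\cdot):= L^1_\delta(\gamma_1)\cdot(\cdot) - L^2_\delta(\gamma_2)\cdot(\cdot)$ and $h_\delta(\gamma_1,\gamma_2):= h^1_\delta(\gamma_1) - h^2_\delta(\gamma_2)$, with modulus $\rho:=\rho_1+\rho_2$ (up to a harmless constant absorbing operator norms).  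Since $L\cdot\Gamma = C_1 - C_2 = \R^n$ by hypothesis (2) and Proposition \ref{teo2}, $\Lambda$ is surjective, so Theorem \ref{om}$(ii)$ applies: there is $\check\delta>0$ such that for every $\delta\le\check\delta$ there exists $\gamma_\delta=(\gamma_{1,\delta},\gamma_{2,\delta})\in\Gamma\cap B_\delta$ with
$$
0 = L_\delta(\gamma_\delta)\cdot\gamma_\delta + h_\delta(\gamma_\delta) \in F(\gamma_\delta).
$$

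It remains to unwind this equality.  From $0\in F(\gamma_\delta)$ and the definition of $F$, there exist $\zeta_{1,\delta}\in F_1(\gamma_{1,\delta})$ and $\zeta_{2,\delta}\in F_2(\gamma_{2,\delta})$ (in coordinates) with $\zeta_{1,\delta} - \zeta_{2,\delta}=0$, i.e.\ a common point $z_\delta:=\zeta_{1,\delta}=\zeta_{2,\delta}\in F_1(\gamma_{1,\delta})\cap F_2(\gamma_{2,\delta})$.  Moreover, from the QDQ estimates $z_\delta - z = L^1_\delta(\gamma_{1,\delta})\cdot\gamma_{1,\delta} + h^1_\delta(\gamma_{1,\delta})$, and since $|\gamma_{1,\delta}|\le\delta$, $\|L^1_\delta\|$ is bounded (it lies within $\rho_1(\delta)$ of the compact set $\{L^1\}$), and $|h^1_\delta(\gamma_{1,\delta})|\le\delta\rho_1(\delta)$, we get $|z_\delta - z|\to 0$ as $\delta\to 0$.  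Choosing $\delta=\delta_k\downarrow 0$ and pulling back through $\psi^{-1}$, the points $\gamma_{i_k}:=\gamma_{i,\delta_k}$ and $z_k:=\psi^{-1}(z_{\delta_k})$ satisfy $z_k\in F_1(\gamma_{1_k})\cap F_2(\gamma_{2_k})$ and $z_k\to z$, as required.  The main obstacle is purely bookkeeping: verifying that $\Lambda$ really is a QDQ of the difference map $F$ in the direction of the product cone $\Gamma_1\times\Gamma_2$, which requires combining the two sets of continuous selections and their moduli and checking the three defining inequalities of Definition \ref{qdq} simultaneously --- this is routine but must be done carefully because surjectivity of $\Lambda$ (not just $0\in\mathrm{Int}$) is exactly what licenses the use of the non-punctured Theorem \ref{om} rather than the punctured Corollary \ref{bucoth}.
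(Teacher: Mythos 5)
Your part~(ii) argument reproduces the paper's own proof almost verbatim: form the product cone $\Gamma_1\times\Gamma_2$, the set-valued difference map (the paper takes $F_2-F_1$, you take $F_1-F_2$, a harmless sign flip), combine the two QDQ data with modulus $\rho_1+\rho_2$, observe that $L\cdot\Gamma=\R^n$ because $C_1,C_2$ are complementary subspaces, invoke Theorem~\ref{om}$(ii)$ to hit $0$ exactly, and read off the common point $z_\delta$ from the explicit selection formula. The only wrinkle: you first extract $z_\delta$ as a generic element witnessing $0\in F(\gamma_\delta)$ and then assert the formula $z_\delta-z=L^1_\delta(\gamma_{1,\delta})\cdot\gamma_{1,\delta}+h^1_\delta(\gamma_{1,\delta})$; strictly speaking it is the explicit equality in Theorem~\ref{om}$(ii)$ that lets you \emph{choose} $z_\delta$ to be that particular point, which is the way the paper phrases it, but the content is the same.

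One caveat on part~(i): the paper simply cites \cite{sus}, Theorem~4.37, whereas your sketch via Corollary~\ref{bucoth} does not actually close. Applying the (punctured or directional) open mapping to the difference map $F_1-F_2$ aimed at a nonzero $\bar w\in C_1\cap C_2$ produces \emph{nonzero} values in the image, i.e.\ pairs $\zeta_1\in F_1(\gamma_1)$, $\zeta_2\in F_2(\gamma_2)$ with $\zeta_1-\zeta_2\neq 0$; that does not give a common point $z'\neq z$ of $\mathcal E_1\cap\mathcal E_2$, which is what non-separation requires. The actual AGDQ separation theorem in \cite{sus} uses a more involved padded-map/multicone construction, so this parenthetical claim would need to be replaced by the direct citation.
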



\begin{remark}{\rm Property $ii)$, whose proof is based on the Open Mapping result stated in Theorem \ref{om}, is not true if we replace QDQ approximating cones  with AGDQ approximating cones. Of course, this is connected with the non validity of a non-punctured open mapping result for AGDQ's.

}\end{remark}
\begin{proof}[Proof of Theorem \ref{teoteo}]
{ Statement $i)$ of Theorem \ref{teoteo} is direct consequence of  \cite{sus}, Theorem 4.37, where an analogous   result concerning the non-separation of multicones is provided 

Let us prove  statement $ii)$. 
Because of the local character of the statement, there is not loss of generality in considering only the Euclidean case
when $\mathcal{M}=\R^n$. For every $i=1,2$,  let  $n_i\geq 0$   be the dimensions of  the subspace $C_i$ (so that $n_1+n_2=n$), and let  $N_i\geq 0$ an integer such that  $\Gamma_i\subset\R^{N_i}$. 
By hypothesis, for every $i=1,2$
there exists
a modulus  $\rho_i:[0,+\infty[\,\rightarrow [0,+\infty[$
having the property that,
 for every $\delta>0$,  there exists  a continuous map  $(L_{\delta}^i,h_\delta^i):B_{\delta}\cap\Gamma_i\to Lin\{ \R^{N_i}, \R^{n_i}\} \times \R^{n_i}$, 
   such that 

 $$ |L^i_\delta(\gamma_i) - L^i|\leq \rho_i(\delta),
 \quad |h_\delta^i |\leq \delta \cdot \rho_i(\delta), \quad\hbox{and}\
  z+  L_\delta^i(\gamma_i)\cdot \gamma_i   + h_\delta^i(\gamma_i)\in F_i(\gamma_i)
 $$
whenever $\gamma_i\in B_\delta\cap\Gamma_i$.
Let us consider the cone $\Gamma:=\Gamma_1\times \Gamma_2\subset\R^{N_1+N_2} $ and the set-valued map $F:\Gamma \rightsquigarrow \R^{n}$ defined by setting
$$
F(\gamma_1,\gamma_2) := F_2(\gamma_2) -  F_1(\gamma_1) = \Big\{z_2-z_1  \  | \   (z_1,z_2)\in F_1(\gamma_1)\times F_1(\gamma_2) \Big\} \quad \forall
(\gamma_1,\gamma_2)\in \Gamma_{1}\times\Gamma_2,  
$$
and observe that  $$\hbox{\it if}\ (\bar\gamma_1,\bar\gamma_2) \ \hbox{\it is such that} \ 0\in F(\bar\gamma_1,\bar\gamma_2) \;\hbox{\it then} \; \  \emptyset \neq F_2(\bar\gamma_2) \cap  F_1(\bar\gamma_1)\subseteq \mathcal{E}_2\cap \mathcal{E}_1.$$ 
 Furthermore, let us set $\rho(\delta):= \rho_1(\delta)+\rho_2(\delta)$ and let us define the continuous map

$$(L_\delta,h_\delta)(\gamma_1,\gamma_2):= \Big(\left(L^2_\delta(\gamma_2), -  L^1_\delta(\gamma_1)\right)\ ,\  h^2_\delta(\gamma_2)- h^1_\delta(\gamma_1) \Big), \qquad(\gamma_1,\gamma_2)\in B_{\delta}\cap\Gamma. $$

Defining the linear map $L\in Lin\{\R^{N_1+N_2},\R^n\}$ by  setting  $L(v_1,v_2):= L^2\cdot v_2-L^1\cdot v_1$, one obtains
 $
|L_\delta(\gamma_1,\gamma_2) - L|\leq \rho(\delta)$,  $|h_\delta |\leq \delta \cdot \rho(\delta)$, and
$$
   L_\delta(\gamma_1,\gamma_2)\cdot (\gamma_1,\gamma_2)  + h_\delta(\gamma_1,\gamma_2)\in F(\gamma_1,\gamma_2).
$$
whenever $(\gamma_1,\gamma_2)\in B_\delta\cap\Gamma$. 
Hence, $\Lambda$ is a QDQ of $F$ at $(0,0)$. Moreover, one has $L\cdot\Gamma= C_1\oplus C_2 =\R^n$  so that, by the Open Mapping result stated in Theorem \ref{om} for $k\in\mathbb{N}$ sufficiently large,  we get the existence of 
$(\bar\gamma_1^{\frac1k},\bar\gamma_2^{\frac1k})\in \Gamma\cap  B_{\frac{1}{k}} \subset \Gamma_1\times \Gamma_2$
such that, setting $(\gamma_{1_k},\gamma_{2_k}):=(\bar \gamma_1^{\frac1k}, \bar \gamma_2^{\frac2k})$, one has
$$z_{k}:= z+  L^1_{\frac1k}(\gamma_{1_k})\cdot \gamma_{1_k} + h_{\frac1k}^1(\gamma_{1_k}) =  z +  L^2_{\frac1k}(\gamma_{2_k})\cdot \gamma_{2_k} + h_{\frac1k}^2(\gamma_{2_k})  \in F_1(\gamma_{1_k})\cap  F_2(\gamma_{2_k}).$$
Notice that,  by $ h_{\frac1k}^1(\gamma_{1_k})\leq \rho_1({\frac1k})\cdot{\frac1k}$, and $|\gamma_{1_k}|\leq {\frac1k}$ one has   $\ds\lim_{k\to \infty}z_k = z$, which concludes the proof. }
\end{proof}

\section{Gaps and set-separation}

\subsection{Original and extended controls}
 Let $({\mathcal M},{\langle\cdot,\cdot\rangle})$ be  a  Riemannian  $C^{2}$-differentiable manifold, let $ [0,S]$ be a {\it time}-interval and let  $\FW$ be a metric space which we call the set  of {\it control values}.  For every $(s,\w)\in [0,S]\times\FW$, let  $\mathcal{M}\ni y\mapsto (y, f(s,y,\w))\in T\mathcal{M}$  be a vector field. 
We will consider {\it two families of controls $\V$, $\W:= L^1([0,S],\FW)$}, with 
$  \V\subset\W.$
We will call 
$\V$ and $\W$   the {\it original  family of controls} and the
 {\it extended family of controls}, respectively.

Let us choose an {\it initial point} $\bar y\in\mathcal{M}$, and, for any {\it  control} map  $w\in\W$, let us consider the Cauchy problem
\[
(E)\,\qquad\qquad
\left\{\begin{array}{l}\displaystyle \ds\frac{dy}{ds}(s)={f}(s, y(s),w(s))\quad\mathrm{a.e.\,\, s\in [0,S]}\\[5mm]
y(0)=\bar y
\end{array}\right.  .
\]
\vskip5truemm

We shall assume the following regularity hypothesis:

 {\bf Hypothesis (SH) :}
\begin{itemize}
\item[(i)] for each $(s,\w)\in [0,S]\times \mathfrak{W}$, the vector field  $y\mapsto{f}(s,y,\w)$  is of class $C^1$ on $\M$;
\item[(ii)] there exists  an integrable function $c\in L^{1}([0,S]; \R)$
such that,  for a.e. $s\in [0,S]$, 
\begin{equation}\label{freg}
\left|{f}(s,y,\w) \right|\leq c(s),\qquad 
\left|D{f}(s,y,\w)\right| \leq c(s)
\end{equation}
for every $(y,\w)\in  \M\times \FW$.
\item[(iii)] for every $(y,\w)\in \mathcal{M} \times \FW$,   the map $s\mapsto {f}(s, y,\w)$ is measurable;
\item[(iv)]  for every  $s\in [0,S]$, the map $(y,\w)\mapsto {f}(s, y,\w)$ is continuous.
\end{itemize}

  In particular, for every $w\in\W$ there exists a unique trajectory $y[w]$ of (E).



%
%
%

\vskip3truemm
Let us fix a closed set  $\cruk\subseteq {\mathcal M}$, which we will refer to as  
  {\it target}.
  
\begin{remark}{\rm   Of course, through standard cut-off arguments, in many situations one can  replace (ii) in hypothesis (SH) with a weaker assumption concerning a neighbourhood of $\hat y([0,S])$ istead of the whole state-space $\M$.} 
\end{remark}
 
 \begin{definition} For any control $v\in\V$ {\rm[}resp. $w\in\W${\rm ]},
   the pair $(y, v):=(y[v],v)$ {\rm[}resp. $(y, w):=(y[w],w)${\rm ]} will be  called {\rm original process}  {\rm[}resp. {\rm extended processes}{\rm ]}. 
 An extended process  --in particular, an original process-- $(y, w)$ is called {\rm feasible} if $y(S)\in \T$. 
  \end{definition}

\subsection{Infimum gaps}

Let us endow the set of controls $\W$ with the pseudo-distance $d_{f}$ defined as 
 \begin{equation}\label{distanza}
 d_{f} (w_1,w_2) := d_\infty(y[w_1],y[w_2])\;\Big(:= \max_{s\in [0,S]} d(y[w_1](s),y[w_2](s))\Big),
 \end{equation}
  for all controls $w_1, w_2 \in \W$. 
  

The set 
\begin{equation}\label{original_reach_set}
\mathcal{R}_\V:= \Big\{y[v](S):\quad v\in \V \Big\} \subset \N
\end{equation}
 will be  called the  \textit{original reachable set},
 and the set 
\begin{equation}\label{ext_reach_set}
 \mathcal{R}_\W:= \Big\{y[w](S):\quad w\in \W \Big\}   \subset \N
\end{equation}
will be  called the \textit{extended reachable set}.

We will also consider  local  versions of the above reachable sets. Precisely, for a given extended process  $(\hat y,\hat w)$ and $r\geq 0$, we set
\[
\begin{array}{l}\mathcal{R}^{\hat w,r}_\V:= \Big\{ y[v](S)\,:\qquad  v\in\V,\quad  d_{{f}}( \hat w, v)<r\ \Big\}
\\ \mathcal{R}_{\W}^{\hat w,r} := \Big\{ y[w](S)\,:\qquad  w\in\W,\quad  d_{{f}}( \hat w, w)<r  \Big\}
.\end{array}\]
Clearly $\mathcal{R}_\W\supseteq \mathcal{R}_\V$ and $\mathcal{R}_\W^{\hat w,r}\supseteq \mathcal{R}_\V^{\hat w,r}$, for all $r\geq 0$.


\vskip0.4truecm

 The occurrence  of  a local infimum gap is captured by the following definition:
\begin{definition}\label{gap-generator2}  Let $(\hat{y}, \hat{w})$ be a   feasible extended process such that  $\hat{y}(S)\in \mathcal{R}_{\W}\backslash \mathcal{R_\V}$.  
We say that {\rm $(\hat{y}, \hat{w})$
   satisfies the infimum  gap condition}  if, for any continuous {\it cost}  function $h:\mathbb{R}^{n}\rightarrow \mathbb{R}$, there exists $r>0$ such that one has 
\bel{igc}
h\big(\hat{y}(S)\big) <\inf\Big\{h(y)\,:\quad y\in\mathcal{R}^{\hat w,r}_{\V} \cap \cruk \Big\}
\eeq

\end{definition}

 Despite the name, the infimum  gap condition \eqref{igc} is clearly a fully { dynamical} property. Actually, it could be as well rephrased in terms of  `supremum gap' or even independently of any optimization procedure as shown in Lemma \ref{lemma infgap} below.

\begin{definition}  Let $(\hat{y}, \hat{w})$ be a   feasible extended process such that  $\hat{y}(S)\in \mathcal{R}_{\W}\backslash \mathcal{R_\V}$.  We say that 
 {\rm $(\hat{y}, \hat{w})$ is  isolated from $\V$} if,
for some $r>0$
the sets  $\Big(\mathcal{R}_\V^{\hat w,r}\cup \{ \hat{y}(S)\}\Big)$ and $\cruk$ are locally separated at $\hat{y}(S)$, namely, there exists a neighborhood $\mathcal{N}\subset \mathcal{M}$ of $\hat{y}(S)$ such that $\Big(\mathcal{R}_{\V}^{\hat w,r}\cup \{\hat{y}(S) \}\Big)\cap\cruk\cap \mathcal{N} =\{ \hat{y}(S)\}$.
\end{definition}

 \begin{lma}\label{lemma infgap} Let $(\hat y,\hat w)$ be an extended feasible  process   such that $\hat{y}(S)\in \mathcal{R}_{\W}\backslash \mathcal{R_\V}$. 
Then the following conditions are equivalent:
\begin{itemize}
\item[i)]  $(\hat y,\hat w)$ satisfies \eqref{igc} for a given continuous cost function $h$  and $ \hat{r}>0$;

\item[ii)] 
the process $(\hat{y}, \hat{w})$ is isolated from $\V$;

\item[iii)]  the process  $(\hat y,\hat w)$ satisfies  the infimum gap condition. Furthermore the right hand-side  of \eqref{igc} is  equal to $+\infty$.

 \end{itemize}
\end{lma}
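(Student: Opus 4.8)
\textbf{Proof plan for Lemma \ref{lemma infgap}.}

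The plan is to prove the cycle of implications $i) \Rightarrow ii) \Rightarrow iii) \Rightarrow i)$, noting that the last implication is essentially immediate from the definitions (any process satisfying the infimum gap condition for \emph{all} continuous costs in particular satisfies it for a given one, and the quantitative bound on $\hat r$ is absorbed in the definition). Thus the real content is $i)\Rightarrow ii)$ and $ii)\Rightarrow iii)$, and the latter is the heart of the matter.

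For $i)\Rightarrow ii)$: assume \eqref{igc} holds for some continuous $h$ and some $\hat r>0$, so that $h(\hat y(S)) < m := \inf\{h(y):\ y\in \mathcal{R}^{\hat w,\hat r}_\V\cap\cruk\}$. Pick any value $c$ with $h(\hat y(S)) < c < m$; by continuity of $h$ the sublevel set $\mathcal{N}:=\{y\in\mathcal{M}:\ h(y)<c\}$ is an open neighbourhood of $\hat y(S)$. I claim that with $r=\hat r$ this $\mathcal{N}$ witnesses that $\bigl(\mathcal{R}^{\hat w,\hat r}_\V\cup\{\hat y(S)\}\bigr)$ and $\cruk$ are locally separated at $\hat y(S)$. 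Indeed, if $y\in\mathcal{R}^{\hat w,\hat r}_\V\cap\cruk\cap\mathcal{N}$ then $y\in\mathcal{R}^{\hat w,\hat r}_\V\cap\cruk$ forces $h(y)\geq m>c$, contradicting $y\in\mathcal{N}$; hence no such $y$ exists and the intersection reduces to $\{\hat y(S)\}$ (which lies in $\cruk$ since the process is feasible, and in $\mathcal{N}$ by the choice of $c$). This is the routine direction.

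For $ii)\Rightarrow iii)$, which is the main step: suppose $(\hat y,\hat w)$ is isolated from $\V$, so for some $r>0$ there is a neighbourhood $\mathcal{N}$ of $\hat y(S)$ with $\bigl(\mathcal{R}^{\hat w,r}_\V\cup\{\hat y(S)\}\bigr)\cap\cruk\cap\mathcal{N}=\{\hat y(S)\}$. Fix \emph{any} continuous cost $h:\mathcal{M}\to\R$. Since $\hat y(S)\notin\mathcal{R}_\V$ (by the standing hypothesis $\hat y(S)\in\mathcal{R}_\W\setminus\mathcal{R}_\V$), any competitor $y\in\mathcal{R}^{\hat w,r'}_\V\cap\cruk$ for $r'\le r$ must in particular differ from $\hat y(S)$ and, by the separation property, must lie \emph{outside} $\mathcal{N}$. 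In other words, for $r'\le r$,
\[
\mathcal{R}^{\hat w,r'}_\V\cap\cruk \ \subseteq\ \mathcal{R}^{\hat w,r}_\V\cap\cruk\ \subseteq\ \mathcal{M}\setminus\mathcal{N}.
\]
The subtlety is that the right-hand side of \eqref{igc} must come out equal to $+\infty$; the only way this can happen for an arbitrary continuous $h$ is if the competitor set is \emph{empty}. So the key claim to establish is: there exists $r'>0$ such that $\mathcal{R}^{\hat w,r'}_\V\cap\cruk=\emptyset$. To see this, recall $d_f(\hat w,v)=d_\infty(y[\hat w],y[v])$, so $v\in\mathcal{R}^{\hat w,r'}_\V$ means $y[v](S)$ is within $C^0$-distance $r'$ of $\hat y(S)$, hence $y[v](S)\in\mathcal{B}[\hat y(S),r']$. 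Choosing $r'\le r$ small enough that $\mathcal{B}[\hat y(S),r']\subset\mathcal{N}$ (possible since $\mathcal{N}$ is open), every such $y[v](S)$ lies in $\mathcal{N}$; combined with the displayed inclusion, which forces it \emph{outside} $\mathcal{N}$ whenever it is also in $\cruk$, we conclude $\mathcal{R}^{\hat w,r'}_\V\cap\cruk=\emptyset$. Consequently the infimum in \eqref{igc} is over the empty set, i.e. equal to $+\infty>h(\hat y(S))$, which holds simultaneously for every continuous $h$ and with this uniform $r'$. This proves that $(\hat y,\hat w)$ satisfies the infimum gap condition with the stronger conclusion that the right-hand side of \eqref{igc} is $+\infty$, i.e. $iii)$.

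The main obstacle, and the point worth stating carefully, is precisely the passage from "the competitor set is disjoint from a neighbourhood of $\hat y(S)$" to "the competitor set is empty": this rests on the observation that being in $\mathcal{R}^{\hat w,r'}_\V$ is itself a $C^0$-proximity constraint on the endpoint (through the definition of $d_f$), so shrinking $r'$ confines the endpoints \emph{into} any preassigned neighbourhood of $\hat y(S)$, while the separation hypothesis simultaneously keeps the feasible ones \emph{out} of it — the two can only be reconciled by vacuity. Once this is seen, the equivalence — and in particular the a~priori surprising fact that a genuine infimum gap is always an "infinite" gap at the level of local reachable sets — follows with no further estimates.
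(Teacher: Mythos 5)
Your proposal is correct and follows the same cycle $i)\Rightarrow ii)\Rightarrow iii)\Rightarrow i)$ as the paper, and your $ii)\Rightarrow iii)$ step is the same argument as the paper's (with the advantage of spelling out the step that the paper compresses into ``by possibly reducing the size of $r$'': shrinking $r'$ so that $\mathcal{B}[\hat y(S),r']\subset\mathcal{N}$ confines the endpoints into $\mathcal{N}$ via the definition of $d_f$, which combined with the separation forces vacuity). The only divergence is in $i)\Rightarrow ii)$: the paper argues by contradiction, extracting a sequence $y_k\in\mathcal{R}^{\hat w,r_k}_\V\cap\cruk$ with $r_k\downarrow 0$, so $y_k\to\hat y(S)$, and then invoking continuity of $h$ to violate \eqref{igc}; you instead argue directly by taking the open sublevel set $\mathcal{N}=\{h<c\}$ with $h(\hat y(S))<c<m$ as the separating neighbourhood. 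Both arguments use the same ingredient (continuity of $h$) and both are sound; yours is marginally more constructive, while the paper's in fact establishes the slightly stronger conclusion $\mathcal{R}^{\hat w,r}_\V\cap\cruk=\emptyset$ for all small $r$ directly in this first step, from which $ii)$ follows trivially and the emptiness needed for $iii)$ is already in hand. The two presentations are thus interchangeable in substance.
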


\begin{proof}   We give a proof just for the sake of  completeness, all arguments being trivial.

  Let us start proving that $i)$ implies $ii)$. This means that 
one has to show that  there exists $\hat r>0$ such that
\bel{tisolated}
\mathcal{R}^{\hat w,r}_{\V} \cap \cruk =\emptyset \quad \forall r<\hat r. \eeq  Assume that \eqref{tisolated} is false, which means that there exists a sequence  $r_{n}\downarrow 0$ such that $\mathcal{R}^{\hat w,r_n}_{\V} \cap \cruk \neq \emptyset$ for all natural $n$. This implies that there exists a sequence $(y_{k})_{k\in\mathbb{N}}$ verifying  $y_{k}\in \left( \mathcal{R}^{\hat w,r_k}_{\V} \cap \cruk \right)$ for every $k\in \mathbb{N}$, so that $y_n\rightarrow \hat{y}(S)$, which, in view of the continuity of $h$, contradicts $i$).   Hence, \eqref{tisolated}  holds true, from which  we get $ii$). 

Let us now prove that $ii) \Rightarrow iii)$. By hypothesis,  
there exists a neighborhood $\mathcal{N}$ of $\hat{y}(S)$ such that $\Big(\mathcal{R}_{\V}^{\hat w,r}\cup \{\hat{y}(S) \}\Big)\cap\cruk\cap \mathcal{N} =\{ \hat{y}(S)\}$. Since  $\hat{y}(S)\in \mathcal{R}_{\W}\backslash \mathcal{R_\V}$,  by possibly reducing the size of $r>0$ one obtains that   $\mathcal{R}^{\hat w,r}_{\V} \cap \cruk =\emptyset$,  which obviously implies  $iii)$, with the right hand-side  of \eqref{igc}   equal to $+\infty$. Finally, the relation  $iii)\Rightarrow i)$ is trivial. 

\end{proof} 

\section{Abundance}
 
Our main results  --namely Theorem \ref{GEOM} and Corollaries \ref{MP_INF_GAP} and  \ref{mainth}-- strongly rely on a property  introduced  by J. Warga and called ``abundance''. It consists in  a particular  pervasiveness of $\V$ in $\W$, which  happens to be stronger than density. In fact, because of the presence of a closed final constraint, the mere  density of  $\mathcal{R}_{\V}$ into $\mathcal{R}_{\W}$ is not enough in order to normality to be   a sufficient condition  for gaps' avoidance (see  Subsection \eqref{ex_suss}).  We will make use of  a  generalization of abundance  provided in  \cite{Kaskosz} and we will extend  it to manifolds.

 For every positive integer $N$, let $\Gamma_{N}$ be  the convex hull of the union of the origin with the $N$-simplex, namely 
$$\Gamma_{N}:=\Big\{\gamma=({\gamma}^1,...,{\gamma}^{N})\in \mathbb{R}^{N}: \sum_{j=1}^{N}{\gamma}^{j}\leq 1,\; {\gamma}^{j}\geq 0, \; j=1,...,N\Big\}.$$

For any $\gamma\in \Gamma_{N}$, let us consider the control system on $\M$
\begin{equation}\label{ab_eqF}
\left\{\begin{array}{l}
\ds \frac{dy}{ds}(s)= {f}_\gamma \Big(s,y(s),w(s), w_{1}(s),...,w_{N}(s)\Big)\\\,\\y(0)=\bar y,\end{array}\right.
\end{equation}
where:
 i) the {\it control}  $(w, w_{1},...,w_{N})$ belongs to $\W^{1+N}$,  and
 ii) 
 the  vector field ${f}_{\gamma}$ is defined by setting, for every
 $(s,y)\in[0,S]\times {\mathcal M}$  and $(\w,\w_{1},...,\w_N)\in\FW^{1+N}$,
$$
{f}_{\gamma} \big(s,y,\w, (\w_{1},...,\w_N)\big) :=
{f}(s,y, \w) + \sum_{i=1}^{N}{ \gamma}^{i}\Big({f}(s,y, \w_{i}) - {f}(s,y, \w)\Big).
$$


For every value of the parameter  $\gamma\in\Gamma_{N}$ and every control $\big(w, w_{1},...,w_{N}\big)\in\W^{1+N}$, let us use $y_\gamma\Big[{w, w_{1},...,w_{N}}\Big]$ to denote the corresponding  solution of \eqref{ab_eqF}.\footnote{ Under hipothesis (SH) such a solution exists and is unique.} Notice, in particular,   that  $y[w] =
 y_{\gamma}\Big[{w, w,...,w}\Big] $ for all $w\in \W$ and for all $\gamma\in \Gamma_{N}.$

\begin{definition}\label{abundant}\cite{Kaskosz}
We say that a subclass of controls $\mathcal{V}\subset \mathcal{W}$ is
 {\rm abundant in $\mathcal{W}$} if,  for every integer $N$, every $(1+N)$-tuple of controls $(w, w_{1},...,w_{N}) \in \mathcal{W}^{1+N}$, and  every $\delta>0$, there exists a continuous
  mapping $\theta^{\delta}_{w, w_{1},...,w_{N}}: \Gamma_{N}\rightarrow \mathcal{V}$ such that 
 \begin{equation}\label{stimab} d\Big(y_{\gamma}\Big[w, w_{1},...,w_{N}\Big](S), y\big[\theta^{\delta}_{w, w_{1},...,w_{N}}(\gamma)\big](S)\Big)<\delta,\qquad \forall\gamma \in \Gamma_{N}.\quad
 \end{equation}

\end{definition}

\vskip5truemm
A sufficient condition for abundance, based on  {\it concatenation}, is given in Proposition \ref{Kaskosz} below.

 \begin{definition}\label{concat}
We say that a set of controls $\mathcal{V}$ satisfies the {\rm concatenation
property} if, for every $\bar s\in ]0,S[$  and for any $v_{1},v_{2}\in \mathcal{V}$, one has 
$ v_{1}\chi_{[0,\bar s[}+v_{2}\chi_{[\bar s, S]}\in \mathcal{V},\footnote{Concatenation is weaker than  {\it decomposability} of a set $\mathcal{S} $ of paths on  an interval $[a, b]$  \cite{F,Ma,O},  which prescribes  that for any pair of paths $v_{1},v_{2}\in\mathcal{S}
$ and any measurable set $E\subset [0,S]$, one has 
$$
 v_{1}\chi_{E}+v_{2}\chi_{([0,S]\setminus E)}\in \mathcal{S}.
$$}
$
where we have used $\chi_{E}$ to denote the  indicator function of  a subset  $E\subseteq [0,S]$

\end{definition}

\begin{prop}\label{Kaskosz}\cite{Kaskosz}
Assume that the subfamily $\mathcal{V}\subset \mathcal{W}$ satisfies the concatenation property and is dense in $\mathcal{W}$  with respect to the pseudo-metric $d_{f}$.
Then $\mathcal{V}$ is an abundant subset of $\mathcal{W}$. 
\end{prop}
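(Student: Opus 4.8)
The plan is to verify the abundance condition of Definition~\ref{abundant} directly, using concatenation to build the approximating maps $\theta^\delta_{w,w_1,\dots,w_N}$ and density to control the error. Fix an integer $N$, a tuple $(w,w_1,\dots,w_N)\in\mathcal{W}^{1+N}$, and $\delta>0$. The key observation is that the convexified vector field ${f}_\gamma$ evaluated at a fixed $\gamma\in\Gamma_N$ is, via the ``chattering''/relaxation heuristic, approximable by rapidly switching among the controls $w,w_1,\dots,w_N$ with frequencies dictated by the barycentric coordinates $\gamma^i$ (and $1-\sum_i\gamma^i$ for $w$). More precisely, for a fixed $\gamma$, partition $[0,S]$ into $m$ subintervals and, on the $j$-th subinterval, subdivide it into pieces of relative lengths $1-\sum\gamma^i,\gamma^1,\dots,\gamma^N$ on which we use $w,w_1,\dots,w_N$ respectively; call the resulting concatenated control $v_{m,\gamma}$. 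By the concatenation property, $v_{m,\gamma}\in\mathcal{V}$ provided $w,w_1,\dots,w_N\in\mathcal{V}$; for general $\mathcal{W}$-controls one first replaces each $w_i$ by a $d_f$-close element of $\mathcal{V}$ (possible by density) and absorbs the resulting error into $\delta$.

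Second, I would invoke the standard averaging lemma for ODEs: under hypothesis (SH) (the uniform $L^1$ bounds on $f$ and $Df$), as $m\to\infty$ the trajectory $y[v_{m,\gamma}]$ converges uniformly on $[0,S]$ to the trajectory $y_\gamma[w,w_1,\dots,w_N]$ of the averaged system \eqref{ab_eqF}, with a rate of convergence that is uniform in $\gamma\in\Gamma_N$ because the simplex is compact and the averaging estimate depends on $\gamma$ only through the (bounded) coefficients. This gives, for $m$ large enough, a single switching parameter $m=m(\delta)$ such that $d(y_\gamma[w,w_1,\dots,w_N](S),\,y[v_{m,\gamma}](S))<\delta$ for every $\gamma\in\Gamma_N$. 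Finally, I would check that the assignment $\gamma\mapsto v_{m,\gamma}$ can be made continuous from $\Gamma_N$ into $\mathcal{V}$ in the $d_f$ pseudo-metric: the subinterval endpoints depend continuously (indeed affinely) on $\gamma$, and continuity of the endpoints of the concatenation intervals yields $d_f$-continuity of the concatenated control through continuous dependence of ODE solutions on such data. Setting $\theta^\delta_{w,w_1,\dots,w_N}(\gamma):=v_{m(\delta),\gamma}$ then verifies \eqref{stimab}, proving abundance.

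The main obstacle is the quantitative averaging estimate made \emph{uniform in $\gamma$}, together with the continuity of $\gamma\mapsto\theta^\delta(\gamma)$. For the uniformity, one writes the Gronwall-type bound for $|y[v_{m,\gamma}](s)-y_\gamma[\cdots](s)|$ in which the small parameter is the mesh size $S/m$ times a constant built from $\|c\|_{L^1}$; the delicate point is that the ``fast oscillation'' error term $\int_0^s\big({f}(\tau,y,w(\tau))\,(\text{or }w_i(\tau))-{f}_\gamma(\tau,y,\cdots)\big)\,d\tau$ must be estimated using only measurability and the $L^1$-domination in (SH), which is exactly where one needs either a Scorza--Dragoni-type argument or an absolute-continuity-of-the-integral argument to handle the non-autonomous, merely measurable time dependence; the compactness of $\FW$ is not assumed, but the uniform bound $|{f}|\le c(s)$ substitutes for it. For the continuity in $\gamma$, care is needed near the boundary of $\Gamma_N$ where some $\gamma^i\to 0$ and the corresponding sub-subintervals degenerate to zero length — but degeneration of an interval to a point is continuous in the concatenation operation, so no discontinuity arises. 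Extending this construction to a Riemannian manifold $\M$ rather than $\R^n$ requires only working in local charts along the (compact) reference trajectory and patching, which is routine given that all the estimates in (SH) are chart-independent up to constants.
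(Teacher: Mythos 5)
The paper does not actually prove this proposition: it cites Kaskosz (Theorem IV.3.9 of \cite{Kaskosz}, itself developing arguments of Gamkrelidze \cite{Ga}) for the case $\mathcal{M}=\R^n$ and remarks that the passage to a Riemannian manifold is a routine reformulation in local coordinates. So your proposal is, in effect, reconstructing a proof the paper leaves out. Your high-level strategy --- chattering/relaxation to approximate the $\gamma$-averaged system, a Gronwall/averaging estimate made uniform in $\gamma$ over the compact simplex $\Gamma_N$, the concatenation property to keep the chattering control inside $\mathcal{V}$, affine dependence of switching times on $\gamma$ for continuity, and local charts for the manifold case --- is indeed the Gamkrelidze/Kaskosz route, so in outline it is the right one.

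There is, however, a genuine gap in the step in which you pass from a $\mathcal{W}^{1+N}$-tuple to a $\mathcal{V}^{1+N}$-tuple: ``for general $\mathcal{W}$-controls one first replaces each $w_i$ by a $d_f$-close element of $\mathcal{V}$ (possible by density) and absorbs the resulting error into $\delta$.'' This does not go through as stated. The pseudo-metric $d_f(w_i,\tilde v_i)=\|y[w_i]-y[\tilde v_i]\|_\infty$ only controls closeness of the trajectories emanating from the fixed initial datum $\bar y$ at time $0$; it says nothing about the flow maps $\Phi^{w_i}_{s_1,s_2}(x)$ for starting points $x$ off the reference trajectory and $s_1>0$. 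But those are exactly what enter both pieces of your error: (a) the comparison between $y_\gamma[w,w_1,\dots,w_N]$ and $y_\gamma[\tilde v,\tilde v_1,\dots,\tilde v_N]$, where one must estimate $\int_0^t\big(f(s,z(s),w_i(s))-f(s,z(s),\tilde v_i(s))\big)\,ds$ along a trajectory $z$ of the \emph{convexified} system --- and $d_f$ only yields that estimate along $z=y[w_i]$, up to an additional term $\int c(s)\,|z(s)-y[w_i](s)|\,ds$ which is not small in general, since $y_\gamma$ need not shadow $y[w_i]$; and (b) the comparison between the chattering trajectory built from the $\tilde v_i$'s and the one built from the $w_i$'s, where each chattering piece starts at a different, drifting initial point, again outside what $d_f$-closeness controls. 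In short, ``absorbing the error into $\delta$'' hides the main technical content: the correct interplay between $d_f$-density and concatenation is precisely what the cited Kaskosz/Gamkrelidze argument is about, and it is not a one-line consequence of the $d_f$-closeness of the individual $w_i$'s. A smaller remark: you should also make explicit that the $L^1$-domination in (SH)(ii) is what makes $\gamma\mapsto y[\theta^\delta(\gamma)](\cdot)$ continuous in $C^0$ uniformly up to the boundary of $\Gamma_N$, where sub-subintervals degenerate; this is true, but it is a hypothesis being used, not a freebie.
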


 The proof of this result  for the  special case  when $\mathcal{M}=\R^{n}$ was given in (\cite{Kaskosz}, Theorem IV.3.9)  by developing some arguments in \cite{Ga}.
 The required, obvious,  changes to prove the result  on a Riemannian manifold reduce to a reformulation of estimate \eqref{stimab} in local coordinates, so we omit them.

 \subsection{Approximating  the original reachable set by extended cones }
 Let  us fix a  a feasible extended process $(\hat y,\hat w)$, and, for  any $s,\check s\in[0,S]$, $s>\check s$, let $M(s,\check s):T_{\hat y(\check s)}\mathcal{M} \to T_{\hat y(s)}\mathcal{M}$  denote the differential of the  diffeomorphism established by the differential equation 
 $\dot y=f(s,y,\hat w)$  from a neighborhood of $\hat y(\check s)$  to a neighborhood of $\hat y(s)$.
 As is known, $s\to M(s,\check s)$  is the solution of the {\it variational} Cauchy problem having the following  coordinate representation:
 
\begin{equation}\label{linearized_problem}
  \frac{d {M}}{d s} (s)= \frac{\partial {f}}{\partial y}(s,\hat{y}(s),\hat{w}(s))\circ{M}(s), \qquad M(\check s,\check s) = id_{T_{\hat y(\check s)}\N}.
\end{equation}

\begin{definition}  
Consider a  positive  integer $N$, $N$ control values  $\mathfrak{w}_{1},...,\mathfrak{w}_{N} \in \mathfrak{W}$, and $N$ instants
  $ s_{1},...,s_{N}\in \mathrm{SD}\{{f}(\cdot,\cdot, \hat{w}(\cdot))\}\cap 
\mathrm{SD}\{{f}(\cdot,\cdot,\w_1)\}\cap\ldots\cap 
\mathrm{SD}\{{f}(\cdot,\cdot,\w_N)\}$,\footnote{We have used the notation $SD(\phi)(\cdot,\cdot)$  --introduced in Subsection \ref{basicss}--  to mean the (full measure) Scorza-Dragoni set of a function $\phi=\phi(s,y)$}
$0< s_1<\ldots,< s_N\leq S$ . 
The  convex cone\begin{small}
$$
{\bf C}_{\mathfrak{w}_{1},...,\mathfrak{w}_{N} }^{s_{1},...,s_{N}} =
 span^+\left\{M(S,s_{i})\cdot \Big( {f}(s_{i},\hat{y}(s_{i}), \mathfrak{w}_{i})-{f}(s_{i},\hat{y}(s_{i}), \hat{w}(s_{i})) \Big):\ i=1,\ldots,N\right\}\subset  T_{\hat y(S)} \mathcal{M}
$$\end{small}
will be called  {\rm extended variational cone}   corresponding to the feasible extended process $(\hat y,\hat w)$.

\end{definition}
 
%
%
%

 The following result can be regarded as claiming a sort of  {\it infinitesimal thickness} of  $\V$ in $\W$.

\begin{thm}\label{apprpicc} Let the  original family of controls $\mathcal{V}\subset \mathcal{W}$ be abundant in  $\mathcal{W}$, and 
let  a feasible extended process $(\hat y,\hat w)$ be given. Consider a  positive  integer $N$, $N$ control values  $\mathfrak{w}_{1},...,\mathfrak{w}_{N} \in \mathfrak{W}$, and $N$ instants
  $ s_{1},...,s_{N}\in \mathrm{SD}\{{f}(\cdot,\cdot, \hat{w}(\cdot))\}\cap 
\mathrm{SD}\{{f}(\cdot,\cdot,\w_1)\}\cap\ldots\cap 
\mathrm{SD}\{{f}(\cdot,\cdot,\w_N)\}$,
$0< s_1<\ldots,< s_N\leq S$ . 
Then, for any $r>0$,  the extended variational cones   ${\bf C}_{\mathfrak{w}_{1},...,\mathfrak{w}_{N} }^{s_{1},...,s_{N}} $ is a   QDQ approximating cone to    
 $ \mathcal{R}_\V^{\hat w,r}\cup\{ \hat{y}(S)\}$ at $\hat{y}(S)$.

\end{thm}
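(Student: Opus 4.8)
The goal is to exhibit, for each $r>0$, a natural number $\tilde N$, a convex cone $\Gamma\subset\R^{\tilde N}$, a set-valued map $F:\R^{\tilde N}\rightsquigarrow\mathcal M$ with $F(\Gamma)\subset\mathcal R^{\hat w,r}_\V\cup\{\hat y(S)\}$, and a QDQ $\Lambda=\{L\}$ of $F$ at $(0,\hat y(S))$ in the direction $\Gamma$ such that ${\bf C}^{s_1,\dots,s_N}_{\w_1,\dots,\w_N}=L\cdot\Gamma$. The natural choice is $\tilde N=N$, $\Gamma=\Gamma_N$ (the truncated simplex from the abundance definition), and $L$ the linear map $\gamma\mapsto\sum_{i=1}^N\gamma^i\,M(S,s_i)\cdot\big(f(s_i,\hat y(s_i),\w_i)-f(s_i,\hat y(s_i),\hat w(s_i))\big)$; then $L\cdot\Gamma_N$ is exactly ${\bf C}^{s_1,\dots,s_N}_{\w_1,\dots,\w_N}$ because $\Gamma_N$ spans the positive orthant directions. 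Working in a fixed chart around $\hat y(S)$, everything reduces to the Euclidean statement, so I would pass to local coordinates from the start (Definition \ref{agdqM} guarantees this is legitimate).

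The construction of $F$ proceeds in two layers. First, the \emph{bang-bang needle control}: given $\gamma\in\Gamma_N$ and a small scale $\delta$, one inserts, near each Scorza--Dragoni instant $s_i$, a needle of width proportional to $\delta\gamma^i$ on which $\hat w$ is replaced by the constant $\w_i$; call the resulting control $w^{\delta,\gamma}\in\W$. The classical needle-variation computation --- using that the $s_i$ are Scorza--Dragoni points of $f(\cdot,\cdot,\hat w)$ and of each $f(\cdot,\cdot,\w_i)$, together with the variational equation \eqref{linearized_problem} and hypothesis (SH) --- gives
$$
y[w^{\delta,\gamma}](S)=\hat y(S)+\delta\,L\cdot\gamma+o(\delta),
$$
uniformly for $\gamma\in\Gamma_N$, where $o(\delta)/\delta\to0$ defines the modulus $\rho$. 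This much is standard and is where the Scorza--Dragoni hypotheses are consumed. Second, the \emph{abundance layer}: $w^{\delta,\gamma}$ need not lie in $\V$, so I apply Definition \ref{abundant} to the $(1+N)$-tuple $(\hat w,\w_1,\dots,\w_N)$ --- strictly speaking to a version in which the needle insertion is encoded through the $\Gamma_N$-parametrized field $f_\gamma$; here one must massage the needle construction so that the family $\{w^{\delta,\gamma}\}_{\gamma\in\Gamma_N}$ is (up to reparametrization) of the form handled by abundance, so that there is a \emph{continuous} map $\theta^{\sigma}:\Gamma_N\to\V$ with $d\big(y[\theta^\sigma(\gamma)](S),y[w^{\delta,\gamma}](S)\big)<\sigma$ for any prescribed $\sigma$. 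Choosing $\sigma=\sigma(\delta)\le\delta\rho(\delta)$ and setting $F(\gamma):=\{y[\theta^{\sigma(\delta)}(\gamma)](S)\}$ (with the dependence on $\delta$ absorbed into the definition of the maps $(L_\delta,h_\delta)$), one defines
$$
L_\delta(\gamma):=L,\qquad h_\delta(\gamma):=y[\theta^{\sigma(\delta)}(\gamma)](S)-\hat y(S)-L\cdot\gamma,
$$
which is continuous in $\gamma$ (continuity of $\theta^{\sigma}$ composed with the flow), satisfies $|h_\delta(\gamma)|\le\delta\rho'(\delta)$ for a suitable modulus $\rho'$ combining the needle error and the abundance error, and $\hat y(S)+L_\delta(\gamma)\cdot\gamma+h_\delta(\gamma)=y[\theta^{\sigma(\delta)}(\gamma)](S)\in F(\gamma)$ by construction. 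Since $\min_{L'\in\Lambda}|L_\delta(\gamma)-L'|=0\le\rho'(\delta)$ trivially, all three conditions of Definition \ref{qdq} hold. Finally $F(\Gamma_N)\subset\mathcal R_\V$ because $\theta^{\sigma}$ takes values in $\V$, and one checks $y[\theta^\sigma(\gamma)](S)$ stays within $d_f$-distance $r$ of $\hat w$ for $\delta$ small (needle width $\to0$ plus abundance error $\to0$ force $d_\infty(y[\cdot],\hat y)<r$), so $F(\Gamma_N)\subset\mathcal R^{\hat w,r}_\V\cup\{\hat y(S)\}$ as required; the point $\hat y(S)$ itself is $F(0)=y[\theta^{\sigma}(0)](S)$ up to the error $h_\delta(0)$, which is why the set $\{\hat y(S)\}$ is adjoined.

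The main obstacle is reconciling the \emph{continuity in $\gamma$} demanded by the QDQ definition with the bang-bang needle construction, whose naive version (needles of width $\delta\gamma^i$) is certainly continuous in $\gamma$ at the level of end-points but whose $o(\delta)$ error estimate must be made uniform in $\gamma\in\Gamma_N$ --- this is exactly what the Scorza--Dragoni property is designed to deliver, but it requires care when several needles coalesce or when some $\gamma^i\to0$. Equally delicate is ensuring that the abundance map $\theta^\sigma$ can be applied to this $\gamma$-parametrized family: one has to present the perturbed trajectories as trajectories $y_\gamma[\hat w,\w_1,\dots,\w_N]$ of the averaged system \eqref{ab_eqF} up to an $o(\delta)$ discrepancy, invoke Definition \ref{abundant} for that system at scale $\delta$, and then bound the total error $h_\delta$ by a single modulus. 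Once the uniform needle estimate and this identification are in hand, the verification of the QDQ axioms is purely bookkeeping, and the identity $L\cdot\Gamma_N={\bf C}^{s_1,\dots,s_N}_{\w_1,\dots,\w_N}$ is immediate.
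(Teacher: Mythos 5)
Your overall plan---needle variations, Scorza--Dragoni control of the $o(\delta)$ term, followed by an abundance correction---matches the structure of the paper's proof of Theorem \ref{apprpicc}. However, there is a genuine gap precisely at the joint between the two ``layers,'' and the way you set things up makes that joint harder than it needs to be.

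You build the family $\{w^{\delta,\gamma}\}_{\gamma\in\Gamma_N}$ of \emph{bang-bang} controls whose $i$-th needle has \emph{$\gamma$-dependent width} $\delta\gamma^i$. The trouble is that abundance (Definition \ref{abundant}) is not formulated for an arbitrary continuous $\Gamma_N$-parametrized family of controls; it is formulated for the specific family $\gamma\mapsto y_\gamma[w,w_1,\dots,w_N]$ obtained by running the \emph{convexified} dynamics $f_\gamma$ of \eqref{ab_eqF} with a \emph{fixed} $(1+N)$-tuple of controls. The bang-bang family is not of this form: changing $\gamma$ changes the controls themselves (the needle widths), not a convexification weight applied to fixed controls. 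You are aware of this---you write that ``one must massage the needle construction so that the family $\{w^{\delta,\gamma}\}$ is \dots of the form handled by abundance'' and that one must ``present the perturbed trajectories as trajectories $y_\gamma[\hat w,\w_1,\dots,\w_N]$ \dots up to an $o(\delta)$ discrepancy''---but you do not supply that step, and it is exactly the crux. Proving that bang-bang needles and convexified fixed-width needles agree within $o(\delta)$, \emph{uniformly over $\gamma\in\Gamma_N$}, is in effect a small localized relaxation (Gamkrelidze/chattering) lemma, and nothing in your proposal establishes it. In addition, the notation $y_\gamma[\hat w,\w_1,\dots,\w_N]$ you use in that passage (with the raw control \emph{values} $\w_i$, not needle controls) would run the averaged dynamics $f_\gamma$ over the whole interval $[0,S]$, which produces a full-interval relaxed variation, not the needle-variational cone ${\bf C}^{s_1,\dots,s_N}_{\w_1,\dots,\w_N}$; so even the target of your ``massage'' is the wrong object.

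The paper avoids the obstacle entirely by the following choice of parametrization: fix needle controls $w_i^{\delta/N}$ of \emph{fixed} width $\delta/N$ (replacing $\hat w$ by $\w_i$ on $[s_i-\delta/N,s_i]$), and let $\gamma$ act only as the convexification weight in $f_\gamma$. Then the trajectory $y_\gamma[\hat w, w_1^{\delta/N},\dots,w_N^{\delta/N}]$ is \emph{exactly} of the form covered by Definition \ref{abundant} with the $(1+N)$-tuple $(\hat w, w_1^{\delta/N},\dots,w_N^{\delta/N})$, and the needle computation (Lemma \ref{claim}) is done for these averaged trajectories rather than for bang-bang ones; the map $F(\epsilon)$ in \eqref{F(eps)} is then built directly out of $\theta^{\delta^2}_{\hat w,w_1^{\delta/N},\dots,w_N^{\delta/N}}\big(\pi(\epsilon/\delta)\big)$, with the abundance error set at scale $\delta^2$ to be absorbed into $h_\delta$. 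If you insist on bang-bang needles you must add and prove the uniform bang-bang-to-averaged estimate; if you switch to the paper's parametrization, that step disappears. The rest of your bookkeeping (the modulus $\rho$, the constant linear map $L_\delta\equiv L$, the inclusion of $F(\Gamma_N)$ in $\mathcal R_\V^{\hat w,r}\cup\{\hat y(S)\}$ for small $\delta$, the identity $L\cdot\Gamma_N={\bf C}^{s_1,\dots,s_N}_{\w_1,\dots,\w_N}$) is correct and matches the paper.
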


\begin{remark}{\em While the fact  that  ${\bf C}_{\mathfrak{w}_{1},...,\mathfrak{w}_{N} }^{s_{1},...,s_{N}}$  is  a  QDQ approximating cone to the extended reachable set $\mathcal{R}_\W^{\hat w,r}$ at $\hat{y}(S)$ (for any $r>0$) is a classical argument, utilized in the proof of the Maximum Principle,\footnote{Actually the same is true for other, more classical, cones, e.g. the Boltyanski cone and the regular tangent cone. } the fact that ${\bf C}_{\mathfrak{w}_{1},...,\mathfrak{w}_{N} }^{s_{1},...,s_{N}}$ is a first order approximation for the {\it small} reachable set $\mathcal{R}_\V^{\hat w,r}$ is anything but obvious: it means, in a sense,  that this cone is not too large.}
\end{remark}

\begin{proof}[Proof of Theorem \ref{apprpicc}] 

 We will prove this theorem assuming that $\M$ is an open subset of $\rr^n$, so that we can identify $T_{\hat{y}(S)}\M$ with $\rr^n$.
 Clearly, this is not restrictive because of the local character of the  result.
 
Let  ${\bf C}_{\mathfrak{w}_{1},...,\mathfrak{w}_{N} }^{s_{1},...,s_{N}}$ be an extended variational cone.
Let us set $s_0=0$ and, for every $ i=1,...,N$,  consider  a number $\delta_i\leq s_i-s_{i-1}$ and the  control
\begin{equation}\label{needle}
w^{\delta^i}_{i}(s):=\left\{\begin{array}{l} \hat{w}(s),\quad \forall s\in [0,S]\backslash [s_i - \delta^i, s_i]\\
\mathfrak{w}_{i}\qquad \forall s\in [s_i - \delta^i, s_i].
\end{array}\right. \qquad
\end{equation}
Let us set
 $\bar\delta :=\frac{N}{2} \min\{s_i-s_{i-1} , \ i=1,\ldots,N\}$.
Let us define the set-valued map $F:\R^N\rightsquigarrow \R^n
 $  as
\begin{equation}\label{F(eps)}
F(\epsilon)=\left\{ y\Big[\theta^{\delta^2}_{\hat{w},w_{1}^{\delta/N},\ldots, w^{\delta/N}_N}\left(\pi\left(\ds\frac{\epsilon}{\delta}\right)\right)\Big](S):\quad\quad  0< \delta \leq \bar \delta  \right\}\quad \forall\epsilon\in \R^N,
\end{equation}
where
$\pi:\R^N\to \Gamma_{N}$ denotes the orthogonal projection on $\Gamma_{N}$ (which, because of the convexity of $\Gamma_{N}$, is a continuous, single-valued, map). Notice that, by construction $F(\epsilon) \subseteq   \mathcal{R}_{\V}$, for every $\epsilon\in\R^N$.

 For each $\delta\in ]0, \bar{\delta}]$ and $\epsilon\in \Gamma_N\cap  B_{\delta}$, let us choose $\gamma=\left(\gamma^1,\ldots, \gamma^N \right):=\left(\frac{\epsilon^1}{\delta},\ldots,\frac{\epsilon^N}{\delta}\right)\in \Gamma_N$.
  From \eqref{mezzavia} in Lemma \ref{claim} below it follows  that
\begin{equation}\begin{array}{l}
y_{\gamma}[\hat{w},w_{1}^{ \delta/N},\ldots, w^{ \delta/N}_N](S) = y_{\epsilon/\delta}[\hat{w},w_{1}^{\delta/N},\ldots, w^{\delta/N}_N](S)= \\\\\ds \qquad=\hat{y}(S)+\frac{1}{N}\sum_{i=1}^{N}\epsilon^i\,  M(S,s_i)\cdot \Big( f(s_{i}, \hat{y}(s_i), \w_i)- f(s_{i}, \hat{y}(s_i), \hat{w}(s_i)\Big) +\phi(\epsilon,\delta),\end{array}
\end{equation}
for every   $\epsilon\in \Gamma_N\cap  B_{\delta}$, where  $\phi:\ds\bigcup_{0<\delta\leq \bar\delta}\left((\Gamma_N\cap B_\delta)\times \{\delta\}\right)\to \R^n$     is a continuous function  which verifies
$\max\big\{|\phi(\epsilon,\delta)|, \; \epsilon\in \Gamma_N\cap B_\delta\big\} = o(\delta).  $    
In view of the abundance property, for each $\delta\in]0,\bar \delta]$ and $\epsilon\in \Gamma_N\cap B_{\delta}$, there exists $\tilde\phi:\ds\bigcup_{0<\delta\leq \bar\delta}\left((\Gamma_N\cap B_\delta)\times \{\delta\}\right)\to \R^n$ such that
$$
y\Big[\theta^{\delta^2}_{\hat{w},w_{1}^{\delta/N},\ldots, w^{\delta/N}_N}\left(\frac{\epsilon}{\delta}\right)\Big](S) -y_{\epsilon/\delta}[\hat{w},w_{1}^{\delta/N},\ldots, w^{\delta/N}_N](S)= \tilde{\phi}(\epsilon,\delta)
$$
for all $\epsilon\in\Gamma_N\cap B_\delta$, with   $\max\big\{|\tilde{\phi}(\epsilon,\delta)|, \;\epsilon\in \Gamma_N\cap B_\delta\big\} \leq \delta^2$.
Therefore 

\begin{equation}\label{ab}\begin{array}{l}
\qquad y\Big[\theta^{\delta^2}_{\hat{w},w_{1}^{\delta/N},\ldots, w^{\delta/N}_N}\left(\ds\frac{\epsilon}{\delta}\right)\Big](S)=\\ \qquad =\hat{y}(S)+ \ds\frac1N \sum_{i=1}^{N}\epsilon^i\, M(S,s_i)\cdot \Big( f(s_{i}, \hat{y}(s_i), \w_i)- f(s_{i}, \hat{y}(s_i), \hat{w}(s_i)\Big) +h^\delta(\epsilon),\end{array}
\end{equation}
where 
$h^\delta(\epsilon) :=\phi(\epsilon,\delta) + \tilde{\phi}(\epsilon,\delta)$.
Observe that 
\bel{h}
|h^\delta(\epsilon)|\leq \delta \rho(\delta),\quad\forall\epsilon\in B_\delta,
\eeq
where we have set 
$\ds
\rho(\delta) := \frac{\ds\max\big\{|\phi(\epsilon,\delta)|, \; \epsilon\in \Gamma_N\cap B_\delta \big\}}{\delta} + \delta.
$

For every $\delta\in]0,\bar\delta]$, let  us   define  the map $$
\begin{array}{l}
A^{\delta}:\Gamma_N\cap B_\delta \to Lin(\R^N,\R^n)\times \R^n\\
\qquad\qquad \ \ \epsilon \mapsto A^{\delta}(\epsilon):=(L, h^{\delta}(\epsilon)),
\end{array}$$ where $L$ is the linear map defined as
$$L\cdot b =\ds\frac1N\sum_{i=1}^N b^i M(S,s_i)\cdot\Big( f(s_{i}, \hat{y}(s_i), \w_i)- f(s_{i}, \hat{y}(s_i), \hat{w}(s_i)\Big), \quad\forall b\in \R^N.$$

Notice that, because of the continuity  w.r.t. $\epsilon$ of the left-hand side of \eqref{ab}, for every $\delta>0$, the map  $\epsilon\mapsto A^{\delta}(\epsilon)$ is continuous. 
By rewriting relation \eqref{ab}  as
$$ y\Big[\theta^{\delta^2}_{\hat{w},w_{1}^{\delta/N},\ldots, w^{\delta/N}_N}\left(\frac{\epsilon}{\delta}\right)\Big](S)=\hat{y}(S)+ L\cdot\epsilon + h^{\delta}(\epsilon),$$
 we get 
$$\hat{y}(S)+ L\cdot\epsilon + h^{\delta}(\epsilon)\in F(\epsilon),$$
which means that  $L$  is a  QDQ of $F$ at $(0,\hat y(S))$ in the direction of the set  $\Gamma_N$.
 Therefore, $\Lambda$ is  also a QDQ
   of $F$ at  $(0,\hat y(S))$ in the direction of the cone  $\Gamma=[0,+\infty[^N $. Since
  $ {\bf C}_{\mathfrak{w}_{1},...,\mathfrak{w}_{N} }^{s_{1},...,s_{N}} = L\cdot \Gamma$,
  one concludes that  $ {\bf C}_{\mathfrak{w}_{1},...,\mathfrak{w}_{N} }^{s_{1},...,s_{N}}$ is a QDQ approximating cone to   
 $\mathcal{R}_{\V}\cup\{ \hat{y}(S)\}$ at $\hat{y}(S)$.\end{proof}

\begin{lma}\label{claim}
  Fix $\gamma\in\Gamma_N$. Then, the map
$\epsilon\to y_{{\gamma}}\Big[\hat w, w^{\epsilon^1}_1,...,w_N^{\epsilon^N}\Big](S)$
verifies 
\begin{equation}\label{mezzavia}\begin{array}{l} y_{{\gamma}}\left[\hat w, w^{\epsilon^1}_1,...,w_N^{\epsilon^N}\right](S) -\hat y(S) \\ \ \ =\ds\sum_{i=1}^N\gamma^{i} \epsilon^{i} M(S,s_{i}) \cdot \Big( {f}(s_{i},\hat{y}(s_{i}), \mathfrak{w}_{i})-{f}(s_{i},\hat{y}(s_{i}), \hat{w}(s_{i})) \Big)+\phi(\epsilon,  \gamma),\end{array}
\end{equation}
where $\phi$ is a continuous function verifying  $ \max \left\{ \phi (\epsilon, \gamma):\; \gamma\in \Gamma_N \right\} = o(\epsilon)$.
\end{lma}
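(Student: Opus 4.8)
The statement to prove is Lemma \ref{claim}, the first-order expansion of the endpoint map $\epsilon \mapsto y_\gamma[\hat w, w_1^{\epsilon^1},\dots,w_N^{\epsilon^N}](S)$ in terms of needle variations at the Scorza-Dragoni instants $s_1<\dots<s_N$. The plan is to treat this as a finite concatenation of $N$ independent single-needle perturbations, carried along the reference flow by the variational matrices $M(S,s_i)$, and to control the remainder using the Scorza-Dragoni property of the instants $s_i$ together with hypothesis (SH). First I would reduce to the Euclidean case (as in the proof of Theorem \ref{apprpicc}, legitimate by the local character of the expansion) and fix the reference trajectory $\hat y$, writing everything in local coordinates so that $M(s,\check s)$ is the fundamental solution of the linearized problem \eqref{linearized_problem}.

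The core computation is the classical needle-variation estimate, done one index at a time. Fix $i$ and consider the trajectory that agrees with the flow of $f_\gamma(\cdot,\cdot,\hat w, (\mathfrak w_1,\dots,\mathfrak w_N))$ evaluated along the chosen controls, but on the small interval $[s_i-\epsilon^i, s_i]$ the control $w_i$ has been switched from $\hat w(s)$ to $\mathfrak w_i$; note that in the vector field $f_\gamma$ this switch multiplies the difference $f(s,y,\mathfrak w_i)-f(s,y,\hat w(s))$ by the weight $\gamma^i$. Over that short interval the trajectory deviates from the unperturbed one by
\[
\gamma^i \int_{s_i-\epsilon^i}^{s_i}\big(f(s,y(s),\mathfrak w_i)-f(s,y(s),\hat w(s))\big)\,ds = \gamma^i\epsilon^i\big(f(s_i,\hat y(s_i),\mathfrak w_i)-f(s_i,\hat y(s_i),\hat w(s_i))\big) + o(\epsilon^i),
\]
where the fact that $s_i \in \mathrm{SD}\{f(\cdot,\cdot,\hat w(\cdot))\}\cap\mathrm{SD}\{f(\cdot,\cdot,\mathfrak w_i)\}$ is exactly what converts the average of $f$ over $[s_i-\epsilon^i,s_i]$ into the pointwise value $f(s_i,\hat y(s_i),\cdot)$ with an $o(\epsilon^i)$ error (using also that $y(s)\to\hat y(s_i)$ as $s\to s_i$, and the uniform bounds \eqref{freg}). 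This deviation vector, created at time $s_i$, is then transported to the terminal time $S$ by the linearized flow, which to first order multiplies it by $M(S,s_i)$; the error in this linearization is again $o(\epsilon^i)$ because the perturbation is $O(\epsilon^i)$ and $Df$ is $L^1$-bounded, so Gronwall gives a uniform Lipschitz dependence. Between the needles (on the intervals where all perturbations are off) the perturbed and reference equations coincide up to the accumulated displacement, so the contributions of the $N$ needles superpose linearly to first order. Summing over $i=1,\dots,N$ yields \eqref{mezzavia} with a remainder $\phi(\epsilon,\gamma)$ that is continuous (continuity in $\epsilon$ comes from continuous dependence of ODE solutions on parameters, given (SH)(iv)) and satisfies $\max_{\gamma\in\Gamma_N}|\phi(\epsilon,\gamma)| = o(\epsilon)$, the uniformity in $\gamma\in\Gamma_N$ following from compactness of $\Gamma_N$ and the bound $0\le\gamma^i\le 1$.

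The main obstacle I expect is the bookkeeping needed to make the ``superposition is linear to first order'' claim rigorous when the endpoints $\epsilon^i$ are not ordered in size and when $\gamma$ ranges over all of $\Gamma_N$: one must verify that cross-terms between distinct needles are genuinely $o(\epsilon)$ and that all the $o(\cdot)$ estimates are uniform in $\gamma$. The cleanest way to organize this is induction on $N$ — expand first the effect of the single needle at $s_N$ on the interval $[s_{N-1},s_N]$, push its displacement forward by $M(S,s_N)$, then apply the inductive hypothesis on $[0,s_{N-1}]$ to the (slightly perturbed) trajectory and absorb the difference into the remainder using the Lipschitz bound from Gronwall. A subtlety to handle carefully is that the needle control $w_i^{\epsilon^i}$ modifies only the $i$-th slot of $f_\gamma$, so the switched vector field differs from $f_\gamma(\cdot,\cdot,\hat w,(\hat w,\dots,\hat w)) = f(\cdot,\cdot,\hat w)$ precisely by the weighted term $\gamma^i(f(\cdot,\cdot,\mathfrak w_i)-f(\cdot,\cdot,\hat w))$ on $[s_i-\epsilon^i,s_i]$ — once this is stated explicitly the computation is routine. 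Everything else is standard needle-variation calculus under the integrable-bound hypothesis (SH)(ii).
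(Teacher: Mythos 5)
Your proposal is correct and follows essentially the same route as the paper's proof: treat the single-needle case first (using the Scorza--Dragoni property of the $s_i$ to turn the integral average into a pointwise value with $o(\epsilon^i)$ error, Gronwall for the Lipschitz bound, and the variational matrix $M(S,s_i)$ to transport the displacement to time $S$), then pass to general $N$ by finite induction. The paper likewise carries out the $N=1$ computation in detail and remits the $N\geq 2$ case to a finite induction patterned on the multiple-needle argument in the standard proof of the Pontryagin Maximum Principle.
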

\begin{proof}
Let us begin proving  the lemma in the case when $N=1$ and  $0\leq\gamma \leq1$.  One has 
\bel{STIMA!}\begin{array}{l} 
 y_{\gamma}\left[\hat w, w^{\epsilon^1}_1\right](s_1) -\hat y(s_1)= y_{{\gamma}}\left[\hat w, w^{\epsilon^1}_1\right](s_1-\epsilon^1) -\hat y(s_1-\epsilon^1)
 \\
 \qquad +  \ds\int_{s_1-\epsilon^1}^{s_1}\left( f_{\gamma}\left(s,y_{{\gamma}}[\hat w, w^{\epsilon^1}_1](s), \hat w(s),\w_1\right) - f(s,\hat y(s), \hat w(s)) \right)ds\\
 \qquad = \ds\int_{s_1-\epsilon^1}^{s_1}\left( f_{\gamma}\left(s,y_{{\gamma}}[\hat w, w^{\epsilon^1}_1](s), \hat w(s),\w_1\right) - f(s,\hat y(s), \hat w(s)) \right)ds\\
= \epsilon^1\Big( f_{\gamma}\left(s_1,\hat y(s_1),\hat w(s_1), \w_1\right) - f(s_1,\hat y(s_1), \hat w(s_1)) \Big) +\Phi_1(\epsilon^1,  \gamma) + \Phi_2(\epsilon^1)\\\\
=\gamma\, \epsilon^1\cdot \Big(f\left(s_1,\hat y(s_1),\hat \w_1\right)-f\left(s_1,\hat y(s_1),\hat w(s_1)\right)\Big) +\Phi_1(\epsilon^1,  \gamma) + \Phi_2(\epsilon^1),
 \end{array}
\eeq
where 
 $$\Phi_1(\epsilon^1,  \gamma) := \ds\int_{s_1-\epsilon^1}^{s_1}\left( f_{\gamma}\left(s,y_{{\gamma}}[\hat w, w^{\epsilon^1}_1](s), \hat w(s),\w_1\right) - f_{\gamma}\left(s_1,\hat y(s_1),\hat{w}(s_1), \w_1\right) \right)ds,
 $$ 
 $$
 \Phi_2(\epsilon^1) := \ds\int_{s_1-\epsilon^1}^{s_1}\Big( f(s_1,\hat y(s_1), \hat w(s_1))  - f(s,\hat y(s), \hat w(s)) \Big)ds.
 $$
To simplify the notation, in what follows we will write $y_{\gamma}(s)$ in place of $y_{{\gamma}}[\hat w, w^{\epsilon^1}_1](s)$. Using hypothesis \textbf{(SH)}-(ii), one  obtains the following estimate:  
\begin{equation}
\begin{array}{llll}
\\
\ds \left|y_{\gamma}(s_1)-\hat{y}(s_1) \right|\leq \int^{s_1}_{s_1-\epsilon^1}\left| f(s,y_{\gamma}(s),\hat{w}(s))-f(s,\hat{y}(s),\hat{w}(s)) \right|ds+\\
\ds \gamma\int^{s_1}_{s_1-\epsilon^1}\left| f(s,y_{\gamma}(s),\w_1)-f(s,y_{\gamma}(s),\hat{w}(s))+f(s,\hat{y}(s),\hat{w}(s))-f(s,\hat{y}(s),\w_1) \right| ds+\\
\ds \gamma\int^{s_1}_{s_1-\epsilon^1}\left|f(s,\hat{y}(s),\w_1)-f(s,\hat{y}(s), \hat{w}(s)) \right|ds\leq \\
\ds \left(1+2\gamma \right)\int^{s_1}_{s_1-\epsilon^1} c(s) \left|y_{\gamma}(s)-\hat{y}(s) \right| ds +\gamma\int^{s_1}_{s_1-\epsilon^1}\left|f(s,\hat{y}(s),\w_1)-f(s,\hat{y}(s), \hat{w}(s)) \right|ds.
\end{array}
\end{equation} 
Setting, for $s\in [s_1-\epsilon^1, s_1]$, 
\begin{equation}
\alpha(s)=\gamma\,\int^{s}_{s_1-\epsilon^1}\left|f(s,\hat{y}(s),\w_1)-f(s,\hat{y}(s), \hat{w}(s))\right|ds,
\end{equation}
it follows from the Gronwall's Lemma that
\begin{equation}\label{StimaGronwall1}
\begin{array}{ll}
\ds \left|y_{\gamma}(s_1)-\hat{y}(s_1) \right|\leq \alpha(s_1)+\int_{s_1-\epsilon^1}^{s_1}(1+2\gamma)c(s) \mathrm{exp}\left\{(1+2\gamma)c(s)(s_1-s) \right\} \alpha(s) ds\leq \\
\ds \leq 2\gamma\,\int_{s_1-\epsilon^1}^{s_1} c(s) ds+ o(\epsilon^1) \rightarrow 0
\end{array}
\end{equation}
when $\epsilon^1\rightarrow 0$. Therefore,
\begin{equation}\label{StimaGronwall2}
\left| y_{\gamma}(s)-\hat{y}(s_1) \right|\leq\left| y_{\gamma*}(s)-y_{\gamma*}(s_1) \right|+\left| y_{\gamma*}(s_1)-\hat{y}(s_1) \right| \leq (1+4\gamma)\int_{s_1-\epsilon^1}^{s_1} c(s) ds+ o(\epsilon^1).
\end{equation}
Since 
\begin{itemize}
 \item $s_1$ is a Scorza-Dragoni point of ${f}(\cdot,\cdot, \hat{w}(\cdot)) $ and ${f}(\cdot,\cdot,\w_1)$ ,  and 
\item the maps $y\mapsto {f}(s, y, \hat{w}(s))\}$, $y\mapsto{f}(s,y,\w_1)$ are Lipschitz continuous in a neighbourhood of $\hat y([0,S])$,
\end{itemize}
  in view of \eqref{StimaGronwall1}, \eqref{StimaGronwall2}, one easily  gets 
\bel{STIMA!!}
 \max \left\{\Phi_1(\epsilon^1, \gamma):\; 0\leq \gamma \leq 1\right\}= o(\epsilon^1), \qquad \Phi_2(\epsilon^1)= o(\epsilon^1). 
\eeq
If we set  $ \phi(\epsilon^1,  \gamma):= \Phi_1(\epsilon^1,  \gamma) +  \Phi_2(\epsilon^1)$, it follows by  estimates \eqref{STIMA!} and \eqref{STIMA!!} that 
$$\begin{array}{l}
 y_{\gamma}\left[\hat w, w^{\epsilon^1}_1\right](s_1) -\hat y(s_1)\\
=
   \gamma\,\epsilon^1 \Big(f\left(s_1,\hat y(s_1),\hat \w_1\right)-f\left(s_1,\hat y(s_1),\hat w(s_1)\right)\Big) +\phi(\epsilon^1,  \gamma). \end{array}
$$
Hence, one has
$$
\frac{d}{d\epsilon^1}  y_{\gamma}\left[\hat w, w^{\epsilon^1}_1\right](s_1)_{\ds |_{\epsilon^1=0}} = 
 \gamma\, \Big(f\left(s_1,\hat y(s_1),\hat \w_1\right)-f\left(s_1,\hat y(s_1),\hat w(s_1)\right)\Big),   $$which, by the basic  theory of linear  ODE's, implies that 
$$\begin{array}{l}
\ds\frac{d}{d\epsilon^1}  y_{\gamma}\left[\hat w, w^{\epsilon^1}_1\right](S)_{{\ds |_{\epsilon^1=0}}} =
M(S,s_1)\cdot \frac{d}{d\epsilon^1}  y_{\gamma}\left[\hat w, w^{\epsilon^1}_1\right](s_1)_{ \ds |_{\epsilon^1=0}}=\\
M(S,s_1)\cdot \gamma\Big(f\left(s_1,\hat y(s_1),\hat \w_1\right)-f\left(s_1,\hat y(s_1),\hat w(s_1)\right)\Big).\end{array} $$
Therefore, the lemma is proved for $N=1$ and for any {$0\leq\gamma\leq1$}.
The  general case $N\geq2$ is easily obtained  by a  finite induction argument.  The latter
doesn't present any new difficulty with respect to  the proof of the case $N=1$.  Moreover is almost verbatim the one utilized in the proof of the Pontryagin Maximum Principle  when passing from single to  multiple, finitely many needle variations  (see e.g. \cite{Scha}, Theorem 4.2.1). For this reason we omit it.
 \end{proof}
%
%
 
The reason why we have adopted QDQ approximating cones as tangential objects relies on  the validity of the following result. \footnote{Such a result would be not true if we chose to utilize AGDQ approximating cones}

\begin{thm}\label{propcomp}  Let the  original family of controls $\mathcal{V}\subset \mathcal{W}$ be abundant in  $\mathcal{W}$, and 
let  a feasible extended process $(\hat y,\hat w)$ be given. Consider a  positive  integer $N$, $N$ control values  $\mathfrak{w}_{1},...,\mathfrak{w}_{N} \in \mathfrak{W}$, and $N$ instants
  $ s_{1},...,s_{N}\in \mathrm{SD}\{{f}(\cdot,\cdot, \hat{w}(\cdot))\}\cap 
\mathrm{SD}\{{f}(\cdot,\cdot,\w_1)\}\cap\ldots\cap 
\mathrm{SD}\{{f}(\cdot,\cdot,\w_N)\}$,
$0< s_1<\ldots,< s_N\leq S$ .
Moreover, let   ${C}$ be  a  QDQ approximating cone    to the target $\cruk$ at $\hat{y}(S)$.
If  ${\bf C}_{\mathfrak{w}_{1},...,\mathfrak{w}_{N} }^{s_{1},...,s_{N}} $ and $ C$,  are  
complementary subspaces, i.e.  ${\bf C}_{\mathfrak{w}_{1},...,\mathfrak{w}_{N} }^{s_{1},...,s_{N}} \oplus C=T_{\hat{y}(S)}\M$,  then there exists a sequence $(z_k)_{k\in\mathbb{N}} \subset {\mathcal R}_{\V}\cap \cruk$ such that 
 $$\lim_{k\to\infty} z_k =\hat{y}(S)$$.
\end{thm}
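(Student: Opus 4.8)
The plan is to apply Theorem \ref{teoteo}, part $ii)$ (the set-separation theorem for QDQ approximating cones that are complementary subspaces), to the two sets $\mathcal{E}_1 := \mathcal{R}_\V^{\hat w, r}\cup\{\hat y(S)\}$ and $\mathcal{E}_2 := \cruk$, at the point $z := \hat y(S)$. Indeed, $z\in\mathcal{E}_1\cap\mathcal{E}_2$ since $\hat y(S)\in\cruk$ by feasibility. By Theorem \ref{apprpicc}, for every $r>0$ the extended variational cone $C_1 := {\bf C}_{\mathfrak{w}_{1},...,\mathfrak{w}_{N}}^{s_{1},...,s_{N}}$ is a QDQ approximating cone to $\mathcal{E}_1 = \mathcal{R}_\V^{\hat w,r}\cup\{\hat y(S)\}$ at $z$, and by hypothesis $C_2 := C$ is a QDQ approximating cone to the target $\cruk = \mathcal{E}_2$ at $z$. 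The standing hypothesis of the theorem is precisely that $C_1\oplus C_2 = T_{\hat y(S)}\M$, so $C_1$ and $C_2$ are complementary linear subspaces. All the structural requirements in item $ii)$, condition (3), of Theorem \ref{teoteo} are met: for $i=1$ the cone $\Gamma_1 = [0,+\infty[^N$ (equivalently $\Gamma_N$), the set-valued map $F_1 = F$ of \eqref{F(eps)} with $F_1(\Gamma_1)\subset\mathcal{R}_\V^{\hat w,r}\subset\mathcal{E}_1$, and the singleton QDQ $\Lambda_1 = \{L\}$ with $C_1 = L\cdot\Gamma_1$ are furnished by the proof of Theorem \ref{apprpicc}; for $i=2$ the corresponding data $(\Gamma_2, F_2, \Lambda_2)$ come directly from the definition of $C$ being a QDQ approximating cone to $\cruk$.

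Applying Theorem \ref{teoteo} $ii)$, I obtain a sequence $(\gamma_{1_k},\gamma_{2_k})\in\Gamma_1\times\Gamma_2$ and points $z_k\in F_1(\gamma_{1_k})\cap F_2(\gamma_{2_k})$ with $z_k\to z = \hat y(S)$. Since $F_1(\gamma_{1_k})\subseteq\mathcal{R}_\V^{\hat w,r}\subseteq\mathcal{R}_\V$ and $F_2(\gamma_{2_k})\subseteq\cruk$, we get $z_k\in\mathcal{R}_\V\cap\cruk$ for every $k$, and $\lim_{k\to\infty} z_k = \hat y(S)$, which is exactly the conclusion of Theorem \ref{propcomp}. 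One small point to double-check: we must ensure the $z_k$ are genuine points of $\mathcal{R}_\V$ (not merely of the augmented set $\mathcal{R}_\V\cup\{\hat y(S)\}$); this holds because $z_k\in F_1(\gamma_{1_k})$ and, by the construction \eqref{F(eps)}, $F_1(\epsilon)\subseteq\mathcal{R}_\V$ for every $\epsilon$, so the artificial point $\hat y(S)$ added in forming $\mathcal{E}_1$ never enters the image of $F_1$.

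There is really no serious obstacle here: the theorem is an essentially immediate corollary of Theorems \ref{apprpicc} and \ref{teoteo}, which do the heavy lifting (the abundance-based proof that the extended variational cone is a \emph{small} QDQ approximating cone, and the open-mapping-based set-separation argument, respectively). The only place requiring a modicum of care is bookkeeping: verifying that the QDQ data produced in the proof of Theorem \ref{apprpicc} — in particular the direction cone $\Gamma = [0,+\infty[^N$ versus the simplex $\Gamma_N$, and the modulus $\rho$ — fit the hypotheses of Theorem \ref{teoteo} $ii)(3)$ verbatim, and that the reachable-set inclusion $F_1(\Gamma_1)\subseteq\mathcal{E}_1$ is the one recorded in \eqref{F(eps)}. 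Once these identifications are in place, the proof is a one-line invocation.
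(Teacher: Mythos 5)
Your proof is correct and follows essentially the same route as the paper's: invoking Theorem \ref{apprpicc} to establish that the extended variational cone is a QDQ approximating cone to (a local version of) $\mathcal{R}_\V\cup\{\hat y(S)\}$, then feeding the same QDQ data $(\Gamma_1,F_1,\Lambda_1)$ and $(\Gamma_2,F_2,\Lambda_2)=(\Gamma,G,\{L\})$ into Theorem \ref{teoteo} $ii)$. Your extra observation that $z_k$ lands in $\mathcal{R}_\V$ proper (not merely in the augmented set) because $F_1(\epsilon)\subseteq\mathcal{R}_\V$ by \eqref{F(eps)} is a worthwhile bit of care that the paper leaves implicit.
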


\begin{proof}In view of Theorem \ref{apprpicc}, ${\bf C}_{\mathfrak{w}_{1},...,\mathfrak{w}_{N} }^{s_{1},...,s_{N}} $ is a  QDQ approximating cone to $\mathcal{R}_{\V}\cup\{\hat{y}(S)\}$ at $\hat{y}(S)$. Furthermore, since $C$ is a QDQ approximating cone to the target $\cruk$ at $\hat{y}(S)$, there   exist a positive integer   $M$, a set-valued
map $G :  \R^M  \rightsquigarrow \M$, a convex cone  $\Gamma\subset\R^M$, and a Quasi Differential Quotient $L$ of $G$ at  $(0,\hat{y}(S))$ in the direction of $\Gamma$
such that $G(\Gamma)\subseteq \cruk$ and $ C =L\cdot\Gamma$. In order to conclude the proof, it is enough to apply Theorem \ref{teoteo}, $ii)$, with $C_1= {\bf C}_{\mathfrak{w}_{1},...,\mathfrak{w}_{N} }^{s_{1},...,s_{N}}$, $C_2=C$,  $N_1=N$, $N_2=M$, $\Gamma_1=[0,\infty)^N$, $\Gamma_2=\Gamma$, $F_1$ defined as in \eqref{F(eps)}, and $F_2=G$. This concludes the proof.
\end{proof}

\section{The main result}

 \begin{thm}[{\sc A geometric principle for gaps}]\label{GEOM} 
  Let us assume that the family of controls  $\V$ is abundant in $\W.$  Let $(\hat y,\hat w)$ be a feasible extended process   satisfying  the infimum gap condition.
  Then  any QDQ  approximating cone $C$    to $\cruk$  at $\hat y(S)$ is { \it  linearly separable}  from  any  extended variational cone ${\bf C}_{\mathfrak{w}_{1},...,\mathfrak{w}_{N} }^{s_{1},...,s_{N}}$, i.e.
  there exists a non-zero linear form $\xi\in T^*_{\hat y(S)}\mathcal{M}$ such that
  $$
  \xi\cdot c_1\leq 0 \leq  \xi\cdot c_2, \qquad \forall (c_1,c_2)\in {\bf C}_{\mathfrak{w}_{1},...,\mathfrak{w}_{N} }^{s_{1},...,s_{N}} \times C. 
  $$
\end{thm}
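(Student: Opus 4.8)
The plan is to argue by contradiction, reducing the statement to the set--separation dichotomy of Theorem~\ref{teoteo}. First I would translate the hypothesis: since $(\hat y,\hat w)$ satisfies the infimum gap condition (so in particular $\hat y(S)\in\mathcal{R}_{\W}\setminus\mathcal{R}_{\V}$), Lemma~\ref{lemma infgap} provides a radius $r>0$ and a neighbourhood $\mathcal{N}\subset\mathcal{M}$ of $z:=\hat y(S)$ such that $\mathcal{E}_1:=\mathcal{R}^{\hat w,r}_{\V}\cup\{z\}$ and $\mathcal{E}_2:=\cruk$ are locally separated at $z$, i.e. $\mathcal{E}_1\cap\mathcal{E}_2\cap\mathcal{N}=\{z\}$. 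Next, by Theorem~\ref{apprpicc} the extended variational cone ${\bf C}:={\bf C}_{\mathfrak{w}_{1},\dots,\mathfrak{w}_{N}}^{s_{1},\dots,s_{N}}$ is a QDQ approximating cone to $\mathcal{E}_1$ at $z$, realized by the set--valued map $F_1$ built in that proof, whose values are endpoints of trajectories of controls in $\V$; taking the approximation parameter $\delta$ small these controls are $d_{f}$--close to $\hat w$, so that $F_1(\Gamma_1)\subseteq\mathcal{R}^{\hat w,r}_{\V}$, and in particular $z\notin F_1(\Gamma_1)$ since $\hat y(S)\notin\mathcal{R}_{\V}$. By assumption $C$ is a QDQ approximating cone to $\mathcal{E}_2$ at $z$, realized by some $(\Gamma_2,F_2,\{L^2\})$ with $F_2(\Gamma_2)\subseteq\cruk$; thus both cones satisfy the hypotheses of Theorem~\ref{teoteo}.

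Suppose, for contradiction, that $C$ and ${\bf C}$ are not linearly separable. By Proposition~\ref{teo3} they are then transverse, and by Proposition~\ref{teo2} either (a) $C$ and ${\bf C}$ are strongly transverse, or (b) $C$ and ${\bf C}$ are complementary linear subspaces, $C\oplus{\bf C}=T_z\mathcal{M}$. In case (a), Theorem~\ref{teoteo}$\,i)$, applied to $\mathcal{E}_1,\mathcal{E}_2$ with the approximating cones ${\bf C},C$, asserts that $\mathcal{E}_1$ and $\mathcal{E}_2$ are \emph{not} locally separated at $z$, contradicting the previous paragraph. In case (b), Theorem~\ref{teoteo}$\,ii)$ --- equivalently, the already established Theorem~\ref{propcomp} --- produces $\gamma_{1_k}\in\Gamma_1$, $\gamma_{2_k}\in\Gamma_2$ and points $z_k\in F_1(\gamma_{1_k})\cap F_2(\gamma_{2_k})\subseteq\mathcal{E}_1\cap\mathcal{E}_2$ with $z_k\to z$; since $z_k\in F_1(\Gamma_1)\subseteq\mathcal{R}^{\hat w,r}_{\V}$ we have $z_k\ne z$ for every $k$, yet for $k$ large $z_k\in\mathcal{N}$, hence $z_k\in\mathcal{E}_1\cap\mathcal{E}_2\cap\mathcal{N}=\{z\}$, again a contradiction. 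Therefore $C$ and ${\bf C}$ are linearly separable, and Proposition~\ref{teo3} (applied with $K_1=C$, $K_2={\bf C}$) yields a non-zero $\xi\in T^*_z\mathcal{M}$ with $\xi\cdot c\ge 0$ for all $c\in C$ and $\xi\cdot c\le 0$ for all $c\in{\bf C}$, which is exactly the claimed inequality $\xi\cdot c_1\le 0\le\xi\cdot c_2$ for $(c_1,c_2)\in{\bf C}\times C$.

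The main obstacle is case (b): it is precisely the degenerate situation of complementary subspaces that forces the use of the genuine (non-punctured) Open Mapping Theorem~\ref{om}, and hence of QDQ rather than merely AGDQ approximating cones --- with an AGDQ one would only obtain points approaching $z$ from outside $F_1(\Gamma_1)$, and the contradiction above would collapse. A secondary point demanding care is verifying that the realization $F_1$ of ${\bf C}$ furnished by Theorem~\ref{apprpicc} can be taken with values in the \emph{small} reachable set $\mathcal{R}^{\hat w,r}_{\V}$ (not merely in $\mathcal{R}_{\V}$), so that the points $z_k$ genuinely violate the local separation at $z$; this is what fixes the parameter $\delta$ to be small in that construction.
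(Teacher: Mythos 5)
Your proposal is correct and follows essentially the same route as the paper: it reduces the statement, via Lemma~\ref{lemma infgap} and Theorem~\ref{apprpicc}, to the dichotomy of Proposition~\ref{teo2}, handling the strong-transversality branch through Theorem~\ref{teoteo}$\,i)$ and the complementary-subspace branch through Theorem~\ref{propcomp} (i.e.\ Theorem~\ref{teoteo}$\,ii)$), and finally invokes Proposition~\ref{teo3} to produce the separating form. The only difference from the paper's write-up is cosmetic --- you package the argument as a proof by contradiction, while the paper proceeds directly from ``not strongly transverse'' to ruling out the residual complementary-subspace case --- and you make explicit the ancillary observations (that $F_1(\Gamma_1)\subseteq\mathcal{R}^{\hat w,r}_{\V}$ and that $z_k\neq z$) that the paper leaves implicit.
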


Let us give   the definition of {\it abnormal extremal, normal  $h$-extremal, and $h$-abnormal extremal.}

{\begin{definition}[Abnormal extremal]\label{abextremalityn} Let  $(\hat y, \hat w)$  be a feasible extended process,
   and  let   ${C}$ be  a  QDQ approximating cone    to the target $\cruk$ at $\hat{y}(S)$.
We say that the process $(\hat{y},\hat{w})$ is  an {\rm abnormal   extremal} (with respect to  $C$)   if  there exists a 
  lift $(\hat y,\lambda)\in  W^{1,1}([0,S];\,T^*\mathcal{M})$  of   $\hat y$  verifying the following conditions:  
\begin{itemize}
\item[$(i)\;$]$\ds \frac{d\lambda}{ds} =  -\ds\lambda\cdot\frac{\partial f}{\partial y}(s,\hat y(s),\hat w(s))$;
 \item[$(ii)\;$] 
 $\ds \max_{\w\in\FW}  {\lambda}(s)\cdot{f}(s,\hat{y}(s),\w)={\lambda}(s)\cdot{f}(s,\hat{y}(s),\hat{w}(s))$ \  \ 
 a.e. $s\in [0,S]$;
\item[$(iii)\;$]  $ {\lambda}(S)\in- {C}^{\perp}$;
 \item[$(iv)\;$] $ \lambda \neq 0$.
 \end{itemize}
\end{definition}

{\begin{definition}[$h$-extremal]\label{extremalityn} Let  $(\hat y, \hat w)$  be a feasible extended process, let  a  (cost) function $h:{\mathcal M}\to\rr$ be  differentiable at $\hat y(S)$,
   and  let   ${C}$ be  a   QDQ approximating cone    of the target $\cruk$ at $\hat{y}(S)$.
We say that the process $(\hat{y},\hat{w})$ is  an {\rm  $h$-extremal} (with respect to $h$ and  $C$)   if  there exist a 
  lift $(\hat y,\lambda)\in  W^{1,1}([0,S];\,T^*\mathcal{M})$  of   $\hat y(\cdot)$ and  a {\rm cost multiplier}  $\lambda_c\in\{0,1\}$ such that:  
\begin{itemize}
\item[$(i)\;$]$\ds \frac{d\lambda}{ds} =  -\ds\lambda\cdot\frac{\partial f}{\partial y}(s,\hat y(s),\hat w(s))$;
 \item[$(ii)\;$] 
 $\ds \max_{\w\in\FW}  {\lambda}(s)\cdot{f}(s,\hat{y}(s),\w)={\lambda}(s)\cdot{f}(s,\hat{y}(s),\hat{w}(s))$ \  \ 
 a.e. $s\in [0,S]$; 
\item[$(iii)\;$]  $ {\lambda}(S)\in -\lambda_c \nabla h(\hat{y}(S))- {C}^{\perp}$;
 \item[$(iv)\;$] $ {(\lambda,\lambda_c)}\neq 0$.
 \end{itemize}
 
Furthermore, we say that  an {\rm $h$-extremal $(\hat y, \hat w)$ is normal} if for every  choice of the pair $(\lambda,\lambda_c)$, one has  $\lambda_c=1$.  We  say that  an {\rm  $h$-extremal $(\hat y, \hat w)$ is abnormal} if it is not normal, namely if exists  a choice of $(\lambda,\lambda_c)$ with $\lambda_c=0$.
\end{definition}

\begin{remark}{\rm Though these definitions have  intrinsic meanings, we have chosen to adopt a notation reminiscent of coordinates. Of course,
the adjoint equation $(i)$ might be expressed --when coupled with the dynamics-- as the Hamiltonian system
$$
\frac{d}{dt}(y,\lambda) = X_H(s,x,\lambda):=J\cdot DH(s,x,\lambda),
$$
where $H:T^*\N\to \R$ is the maximized Hamiltonian defined  by setting 
$$
H(s,x,\lambda):= \max_{\w\in\FW} {\lambda}(s)\cdot{f}(s,x,\w) \qquad \forall (s,x,\lambda)\in [0,S]\times T^*\N
$$
and  $X_H$ is the Hamiltonian vector field, namely $X_H:=J\cdot DH$, $J$ being the symplectic matrix and $D$ the differential operator with respect to $x$ and $\lambda$.

Let us also point out that the dot $\cdot$ has obvious  different meanings according to the context: for instance,  in $(i)$ of the definitions above, it denotes a linear operator on the  cotangent space,  while in $(ii)$ it denotes the duality product. }
\end{remark}

 \if{
 
  For this reason, let us  recall the notion of extremal on a differential manifold.

{\begin{definition}\label{extremality} Let  $(\hat y, \hat w)$  be a feasible extended process, let   ${C}$ be  a Boltyanski approximating cone    of the target $\cruk$ at $\hat{y}(S)$, and let $h:{\mathcal M}\to\rr$ be a differentiable (cost) function  and assume that 
\begin{itemize}
\item[{\bf (D)}] there exists $\delta>0$ s.t., for each $(s,\w)\in [0,S]\times \mathfrak{W}$, the mapping $y\mapsto f(s,y,\w)$  is differentiable  for every $y\in \hat{y}(s)+\delta B$.
\end{itemize} 
We say that the process $(\hat{y},\hat{w})$ is  a $(h,C)$-{\rm extremal}    if  there exist an absolutely continuous    path ${\lambda}(\cdot)\in  W^{1,1}([0,S];\,T^*\mathcal{M})$ and  a cost multiplier  $\lambda_c\in\{0,1\}$  such that {\rm $ {\lambda}(\cdot)$  is a lift of   $\hat y(s)$} --i.e. $\pi\circ{\lambda}(\cdot) = \hat y(\cdot)$-- and, moreover, the following conditions hold true:  

\begin{itemize}
\item[$(i)\;$] $( {\lambda}(\cdot),\lambda_c)\neq 0\qquad$ \,\, \,\,\,\,\,\,\,\,\,\,\,\,\,\,\,\,\,\,\,\, \,\,\,\,\,\,\,\,\,\,\,\,\,\,\,\,\,\,\,\,\,\,\,\,\,\,\,\,\,\,\,\,\,\,\,\,\,\, \,\,\,\,\,\,\,\,\,\,\,\,\,\,\,\,\,\,\,\, \,\rm{(Non-triviality)};
\item[$(ii)\;$] $\ds \frac{d\lambda}{ds} =  X_H(\lambda(s))  \quad a.e.\; s\in [0,S]{\rm \qquad\qquad\qquad\qquad (Hamilton\; Equations),}$

 where $ X_H$ denotes the Hamiltonian vector field\footnotemark
 corresponding to  $H$ .\footnotetext{ As is well known, the Hamiltonian vector field $X_H$ is defined on $T^*\M$ by the relation
$ dH(Y)=\omega(X_H, Y)$, where $Y$ is any vector field on $T^*\M$ and $\omega$ is the simplectic form.}

   

 \item[$(iii)\;$] 
 $  {\lambda}(s)\cdot{f}(s,\hat{y}(s),\w) - 
  {\lambda}(s)\cdot{f}(s,\hat{y}(s),\hat{w}(s)) 
  \leq 0\quad  \,\,\,\,\,\,\,\,\,\,\,\,\,\, \rm{(Maximum\; Principle)} $

 for every $\w\in\FW$ and every $s\in [0,S]\backslash\mathcal{N}$;
\item[$(iv)\;$] $ {\lambda}(S)\in -\lambda_c \nabla h(\hat{y}(S))- {C}^{\perp},$
 \,\,\,\,\,\,\,\,\,\,\,\,\,\,\,\,\,\,\,\,\,\,\,\,\,\,\,\,\,\,\,\,\,  \,\,\,\,\,\,\,\,\,\,\,\,\,\,\,\,\,\,\,\,\,\,\,\,\,\,\,\,\,{\rm (Non-transversality)}.
 \end{itemize}\end{definition}
 }\fi
 
%
%

\if{
\begin{definition}
 We say that a  feasible process $(\hat{y},\hat{w})$ is {\rm a normal $(h,C)$-extremal } if any choice $( {\lambda}(\cdot),\lambda_c)$ in the above definition verifies $\lambda_c \neq 0$
\end{definition}

\begin{definition}\label{abndef}
 We say that a feasible process  $(\hat{y},\hat{w})$ is {\rm an abnormal $C$-extremal } if there exists an absolutely continuous  path $ {\lambda}(\cdot)\in  W^{1,1}([0,S];\,T^*\mathcal{M})$ such that ${\lambda}\neq 0$, conditions  ($i$), ($ii$) and ($iii$) in Definition \ref{extremality} hold true, and, moreover, 
\begin{itemize} \item[$(iv)_0\;$] ${\lambda}(S)\in - {C}^{\perp}$.\qquad\footnotemark
 \end{itemize}
\end{definition}
}\fi

%
%
%

Observe that every abnormal extremal is an abnormal $h$-extremal  for any cost $h$ differentiable at $\hat y(S)$, while every abnormal $h$-extremal is an abnormal extremal.  We are now ready to state our main result on infimum gaps .

\begin{cor}[{\sc Normality No-Gap Criterion}]\label{MP_INF_GAP} 
  Let us assume that the family of controls  $\V$ is abundant in $\W.$  If a feasible extended process $(\hat y,\hat w)$  satisfies the infimum gap condition,
 then, for every  QDQ approximating cone     $C$ to $\cruk$ at $\hat y(S)$,    $(\hat y,\hat w)$ is an  abnormal  extremal  with respect to $C$.
\end{cor}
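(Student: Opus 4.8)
The plan is to derive Corollary~\ref{MP_INF_GAP} from the geometric separation principle Theorem~\ref{GEOM}: that theorem already produces, for the given QDQ approximating cone $C$ to $\cruk$ at $\hat y(S)$ and for \emph{each} extended variational cone ${\bf C}_{\mathfrak{w}_{1},\dots,\mathfrak{w}_{N}}^{s_{1},\dots,s_{N}}$, a non-zero covector $\xi$ separating the two. What remains is (a) to upgrade this family of cone-by-cone separators to a single covector $\xi$ that works simultaneously for \emph{all} extended variational cones, and (b) to turn that $\xi$, via the duality between the variational flow $M(\cdot,\cdot)$ of \eqref{linearized_problem} and the adjoint equation, into an adjoint path exhibiting $(\hat y,\hat w)$ as an abnormal extremal in the sense of Definition~\ref{abextremalityn}.

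For step (a) I would argue by compactness. Fix $C$. For each admissible tuple $\tau=(N;\w_{1},\dots,\w_{N};s_{1},\dots,s_{N})$ as in the definition of the extended variational cone, Theorem~\ref{GEOM} shows that the set $\Sigma_\tau$ of \emph{unit} covectors $\xi\in T^*_{\hat y(S)}\mathcal M$ with $\xi\cdot c_1\le 0\le\xi\cdot c_2$ for all $(c_1,c_2)\in {\bf C}_{\w_{1},\dots,\w_{N}}^{s_{1},\dots,s_{N}}\times C$ is non-empty; it is clearly closed, hence compact, since the unit sphere of the finite-dimensional space $T^*_{\hat y(S)}\mathcal M$ is compact. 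It then suffices to verify the finite intersection property of $\{\Sigma_\tau\}_\tau$. Given $\tau_1,\dots,\tau_m$, I would merge the corresponding lists of pairs (instant, control value) into a single admissible tuple $\tau$: coinciding instants are separated by an arbitrarily small perturbation, which is possible because the pertinent Scorza-Dragoni sets have full measure and, at Scorza-Dragoni points, $f$ — hence each generator $M(S,s)\cdot\big(f(s,\hat y(s),\w)-f(s,\hat y(s),\hat w(s))\big)$ — depends continuously on $s$ (cf.\ \eqref{SD_4}). The extended variational cone of $\tau$ then contains, up to an arbitrarily small error, each ${\bf C}_{\tau_j}$; since $\Sigma_\tau\neq\emptyset$ by Theorem~\ref{GEOM}, a routine passage to the limit absorbing the perturbation gives a unit covector in $\bigcap_j\Sigma_{\tau_j}$. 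By compactness $\bigcap_\tau\Sigma_\tau\neq\emptyset$; pick a unit $\xi$ in it.

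For step (b), let $\lambda\in W^{1,1}([0,S];T^*\mathcal M)$ be the unique lift of $\hat y$ solving the backward Cauchy problem $\frac{d\lambda}{ds}=-\lambda\cdot\frac{\partial f}{\partial y}(s,\hat y(s),\hat w(s))$, $\lambda(S)=\xi$ — well posed by linearity, as $\frac{\partial f}{\partial y}(\cdot,\hat y(\cdot),\hat w(\cdot))\in L^1$ under (SH); this is condition $(i)$. Differentiating $s\mapsto\lambda(s)\cdot M(s,\check s)$ and using \eqref{linearized_problem} yields the standard duality $\lambda(s)=\lambda(S)\cdot M(S,s)$ for every $s$; since $M(S,s)$ is invertible, $\lambda(s)\neq 0$ for all $s$, which is $(iv)$. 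For every $\w\in\FW$ and every $s$ in the common Scorza-Dragoni set of $\hat w$ and $\w$, the vector $M(S,s)\cdot\big(f(s,\hat y(s),\w)-f(s,\hat y(s),\hat w(s))\big)$ lies in some extended variational cone, so $0\ge\xi\cdot M(S,s)\cdot\big(f(s,\hat y(s),\w)-f(s,\hat y(s),\hat w(s))\big)=\lambda(s)\cdot\big(f(s,\hat y(s),\w)-f(s,\hat y(s),\hat w(s))\big)$; letting $\w$ range over a countable dense set of control values and using the continuity of $f(s,\hat y(s),\cdot)$, this gives the maximum condition $(ii)$ for a.e.\ $s$. Finally, $\xi\cdot c\ge 0$ for all $c\in C$ is exactly $\lambda(S)\in-C^\perp$, i.e.\ $(iii)$. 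Hence $(\hat y,\hat w)$ is an abnormal extremal with respect to $C$, and since $C$ was arbitrary the corollary follows.

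I expect the main obstacle to be precisely the aggregation in step (a): passing from the per-cone separators furnished by Theorem~\ref{GEOM} to one covector valid for every extended variational cone, while respecting the structural requirement that the needle instants be distinct, ordered, and common Scorza-Dragoni points of the control values involved. The remaining ingredients — backward solvability of the adjoint ODE, the $M$–$\lambda$ duality, and the density-plus-continuity reduction in the verification of $(ii)$ — are entirely standard and parallel the classical proof of the Pontryagin Maximum Principle.
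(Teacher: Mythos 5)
Your strategy --- separation via Theorem~\ref{GEOM}, $M$-duality, and a compactness/finite-intersection argument --- is the paper's own, up to reordering: the paper dualizes first and intersects the sets $\Lambda(F,\bar\FW)$ of unit covectors satisfying property (P)$_F$, running the finite-intersection argument in two stages (first over finite $F\subset D\times\bar\FW$ for a finite $\bar\FW$, then over finite subsets $\hat\FW\subset\FW$), whereas you intersect the separator sets $\Sigma_\tau$ first and dualize once at the end. That reorganization is harmless. The gap is exactly the one you flagged, and as written it is genuine: in step (a) you justify convergence of the perturbed generators by asserting that at Scorza--Dragoni points ``$f$ --- hence each generator --- depends continuously on $s$, cf.~\eqref{SD_4}.'' But the Scorza--Dragoni property only gives the one-sided, averaged limit of \eqref{SD_4}; it does not make $s\mapsto f(s,\hat y(s),\w)$, still less $s\mapsto f(s,\hat y(s),\hat w(s))$, continuous in $s$ at such a point. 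Without that, the separators $\xi_k$ obtained for the perturbed merged tuples need not have a limit lying in $\bigcap_j\Sigma_{\tau_j}$.

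The paper repairs precisely this step. It applies Lusin's theorem to the common Scorza--Dragoni set $E(\bar\FW)$, writing it (modulo a null set) as a countable union of compacts $E_j$ on which the \emph{restrictions} of the maps $s\mapsto r^{\w}(s)=f(s,\hat y(s),\w)-f(s,\hat y(s),\hat w(s))$ are genuinely continuous, and it then chooses the approximating instants $s_{i,j}$ from the set $D$ of density points of the $E_j$. Approaching each $s_i$ through $D_{h(i)}\subset E_{h(i)}$ makes the generators, and hence the inequalities defining $\Lambda(F,\bar\FW)$, pass to the limit. Your merge-and-perturb plan works once recast in this Lusin/density-point form (alternatively, one can perturb strictly to the right and use the one-sided SD limit, taking care near $s=S$); as stated, the continuity claim is not justified. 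A secondary point: in step (b) you pass from a countable dense subset of $\FW$ to all of $\FW$ by continuity in $\w$, which tacitly assumes $\FW$ separable. The paper's property (PP), where the exceptional null set $I_\w$ is allowed to depend on $\w$, sidesteps this assumption.
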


%
%

When referred to a specific cost $h$, the  contrapositive version of this theorem provides a sufficient condition for the absence of local infimum  gaps. Precisely:

\begin{cor}[\sc A sufficient condition for avoiding infimum gaps]\label{mainth} 
Let us assume that the family of controls  $\V$ is abundant in $\W$, and let   $(\hat y,\hat w)$  be a feasible extended process. Let   $h:{\mathcal M}\to\rr$ be a cost function, differentiable  at $\hat y(S)$, and let $(\hat y,\hat w)$ be a normal $h$-extremal  for some  { QDQ} approximating cone     $C$ to $\cruk$ at $\hat y(S)$.
 Then there is no infimum gap at $(\hat y,\hat w)$. 
\end{cor}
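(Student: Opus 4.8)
The plan is to argue by contraposition, building on the two corollaries that precede. Suppose, for contradiction, that there \emph{is} an infimum gap at $(\hat y,\hat w)$ for the cost $h$; that is, the process $(\hat y,\hat w)$ satisfies the infimum gap condition in the sense of Definition \ref{gap-generator2}. Since the family $\V$ is assumed abundant in $\W$, Corollary \ref{MP_INF_GAP} applies directly: $(\hat y,\hat w)$ must be an \emph{abnormal extremal} with respect to \emph{every} QDQ approximating cone $C$ to the target $\cruk$ at $\hat y(S)$. In particular it is an abnormal extremal with respect to the specific cone $C$ figuring in the hypothesis of the present statement.

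Next I would invoke the elementary remark recorded just before Corollary \ref{MP_INF_GAP}: every abnormal extremal (with adjoint $\lambda$ satisfying $(i)$, $(ii)$, $(iii)$ of Definition \ref{abextremalityn} and $\lambda\neq 0$) is, for any cost $h$ differentiable at $\hat y(S)$, also an abnormal $h$-extremal. Indeed, given the abnormal adjoint path $\lambda$, the pair $(\lambda,\lambda_c)$ with $\lambda_c:=0$ satisfies all four conditions of Definition \ref{extremalityn}: $(i)$ and $(ii)$ are literally the abnormal-extremal conditions; $(iii)$ reads $\lambda(S)\in -0\cdot\nabla h(\hat y(S)) - C^\perp = -C^\perp$, which is exactly condition $(iii)$ for the abnormal extremal; and $(iv)$, namely $(\lambda,0)\neq 0$, holds because $\lambda\neq 0$. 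Hence $(\hat y,\hat w)$ is an $h$-extremal admitting a multiplier choice with $\lambda_c=0$, i.e. it is an \emph{abnormal} $h$-extremal for the cone $C$.

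This contradicts the hypothesis that $(\hat y,\hat w)$ is a \emph{normal} $h$-extremal for $C$, since normality means that \emph{every} admissible choice of $(\lambda,\lambda_c)$ has $\lambda_c=1$. Therefore the assumption of an infimum gap is untenable, and there is no infimum gap at $(\hat y,\hat w)$.

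As for where the real content lies: this corollary is essentially a bookkeeping consequence of Corollary \ref{MP_INF_GAP} together with the definitional relationship between abnormal extremals and abnormal $h$-extremals, so there is no genuine obstacle here — all the analytic weight (the open mapping theorem for QDQ cones, Theorem \ref{apprpicc} showing the extended variational cones approximate the small reachable set, and the set-separation argument of Theorem \ref{teoteo} feeding into Theorem \ref{GEOM}) has already been spent upstream. The only point requiring the slightest care is the logical manipulation of the quantifiers in the definitions of normal versus abnormal, and the observation that the cone $C$ in the statement may be used as the cone with respect to which Corollary \ref{MP_INF_GAP} produces abnormality.
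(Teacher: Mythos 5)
Your proof is correct and matches the paper's own (implicit) argument: the paper presents Corollary~\ref{mainth} simply as ``the contrapositive version'' of Corollary~\ref{MP_INF_GAP}, combined with the remark immediately preceding it that every abnormal extremal is an abnormal $h$-extremal for any $h$ differentiable at $\hat y(S)$. You have merely written out explicitly the one-line verification (taking $\lambda_c = 0$) that the paper leaves to that remark, so the route is the same.
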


As we have  mentioned in the Introduction, the  relation between gap phenomena and abnormality has been quite investigated   in two cases of embeddings: the  embedding of  bounded optimal control problems into their {\it convex relaxation} \cite{PV1, PV2, PV3} and the embedding  of unbounded (convex) control systems  into  their  {\it impulsive, space-time  closure} \cite{MRV}.
Since the original control families in such embeddings turn out to be abundant in their extensions, these kinds of results can be also obtained  by Theorem \ref{MP_INF_GAP}.\footnote{Although the use of different types of cones  describing the non-transversality condition makes  Theorem \ref{MP_INF_GAP} and the results in \cite{ MRV, PV1, PV2, PV3} distinct (see \cite{PRCDC} for the details).}
 In Section \ref{Ex_Imp}, we are going to present a new application to a dynamics which is neither convex nor bounded.

\subsection{A verifiable sufficient condition for normality}

In practical situations, it may be difficult or even impossible to directly verify the normality of an extremal, which, in view of Corollary \ref{mainth}, would guarantee the absence of gaps. This motivates Theorem \ref{normsuff} below, which provides a  sufficient condition {\it on the data of the problem} in order for the  extremals to  be normal. 

In the following definition we assume that a control system 
\bel{cs}\displaystyle \ds\frac{dy}{ds}(s)={f}(s, y(s),w(s)),\qquad w\in\W, \qquad \mathrm{a.e.}\quad s\in [0,S],
\eeq
as above is given, with an initial condition 
\bel{ic}
y(0)=\bar y,
\eeq
and, still, we use $\mathcal{R}_\W$ to denote the reachable set from $\bar y$.
 
 \begin{definition} Consider  a point $\tilde y\in\cruk$ and let   $C\subset T_{\tilde y}\N$  be a QDQ approximating  cone of $\cruk$ at $\tilde y$. If $S>0$, we say that the point  $\tilde y$ is {\rm   $C$-needle-controllable} (with respect to \eqref{cs}-\eqref{ic}) at $S$,
if, for  every ${\boldsymbol\xi}\in C^\bot \backslash \{0\}$, there exist $\delta_1>0$ and $\delta_2\in (0,S]$   such that   
\bel{nr}
\inf_{\check\w\in \FW}\sup_{\w{\in{\FW}} } {\boldsymbol\xi}\cdot ({f}(s,\tilde y,\w)  - {f}(s,\tilde y,\check\w)) \geq \delta_1\quad\quad {a.e.}\; {s\in [S-\delta_2,S]}.
\eeq
\end{definition}

\begin{thm}\label{normsuff} Consider a feasible process $(\hat y,\hat w):[0,S]\to\N\times\FW$  of \eqref{cs}-\eqref{ic}. 
 Let $C$ be a  QDQ  approximating cone to $\cruk$  at   $\hat y(S)$, and let $\hat y(S)$ be {\rm   $C$-needle-controllable}  at $S$.
Then the  process $(\hat y,\hat w)$ is not  an abnormal extremal, so, in particular,  it does not satisfy the infimum-gap condition.

\end{thm}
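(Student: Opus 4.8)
The plan is to argue by contradiction, using the maximum condition on a short interval before $S$. Suppose $(\hat y,\hat w)$ were an abnormal extremal with respect to $C$, and let $\lambda\in W^{1,1}([0,S];T^{*}\M)$ be a corresponding lift of $\hat y$ satisfying $(i)$--$(iv)$ of Definition \ref{abextremalityn}. Since $\lambda$ solves the linear adjoint Cauchy problem $(i)$ and $\lambda\not\equiv 0$, uniqueness for linear ODE's gives $\lambda(s)\neq 0$ for every $s\in[0,S]$; in particular $\lambda(S)\neq 0$, and by $(iii)$ the covector ${\boldsymbol\xi}:=-\lambda(S)$ lies in $C^{\perp}\setminus\{0\}$. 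Applying to this ${\boldsymbol\xi}$ the hypothesis that $\hat y(S)$ is $C$-needle-controllable at $S$, we obtain $\delta_{1}>0$ and $\delta_{2}\in(0,S]$ for which \eqref{nr} holds (with $\tilde y=\hat y(S)$) for a.e. $s\in[S-\delta_{2},S]$.

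Next I would re-read the maximum condition near the endpoint. Let $M(S,s)$ be the fundamental solution of the variational equation \eqref{linearized_problem}. Since $s\mapsto\lambda(S)\circ M(S,s)$ and $s\mapsto\lambda(s)$ solve the same backward linear Cauchy problem issuing from $s=S$, one has the adjoint transport identity $\lambda(s)=\lambda(S)\circ M(S,s)=-{\boldsymbol\xi}\circ M(S,s)$ for every $s$. Hence $(ii)$ says precisely that, for a.e. $s\in[0,S]$, the value $\hat w(s)$ \emph{minimizes} the map $\w\mapsto{\boldsymbol\xi}\cdot\big(M(S,s)\cdot{f}(s,\hat y(s),\w)\big)$ over $\FW$.

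The core of the argument is a transfer of this minimality statement from the running data $(\hat y(s),\lambda(s))$ to the fixed data $(\hat y(S),{\boldsymbol\xi})$. Using the uniform bounds $|{f}(s,\cdot,\w)|\le c(s)$, $|D{f}(s,\cdot,\w)|\le c(s)$ of hypothesis (SH)(ii) and Gronwall's inequality, one gets, with $\Psi(s):=\int_{s}^{S}c(\tau)\,d\tau$, the estimates $|M(S,s)-\mathrm{id}|\le\kappa\,\Psi(s)$ and $d(\hat y(s),\hat y(S))\le\Psi(s)$ for $s$ near $S$, whence
$$\sup_{\w\in\FW}\big|\,{\boldsymbol\xi}\cdot M(S,s)\cdot{f}(s,\hat y(s),\w)-{\boldsymbol\xi}\cdot{f}(s,\hat y(S),\w)\,\big|\ \le\ \kappa'\,|{\boldsymbol\xi}|\,c(s)\,\Psi(s).$$
Since $c(s)\Psi(s)=-\tfrac12\big(\Psi^{2}\big)'(s)$ a.e., one has $\int_{s}^{S}c(\tau)\Psi(\tau)\,d\tau=\tfrac12\Psi(s)^{2}\to 0$ as $s\to S$; therefore, for every $\eta>0$ the set of $s\in[S-\delta_{2},S]$ at which $c(s)\Psi(s)<\eta$ accumulates at $S$. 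Choosing $\eta$ so small that the error above is $<\delta_{1}$, and picking an instant $\bar s$ belonging simultaneously to this set, to the full-measure set where $(ii)$ holds, and to the relevant Scorza--Dragoni sets, the minimality property of the previous paragraph together with the estimate forces $\hat w(\bar s)$ to be an almost-minimizer of $\w\mapsto{\boldsymbol\xi}\cdot{f}(\bar s,\hat y(S),\w)$ up to an error $<\delta_{1}$; this is incompatible with \eqref{nr} evaluated at $\bar s$. (As in the classical proof of the Pontryagin Maximum Principle, one also has to check that the control value extracted from \eqref{nr} can be taken at a Scorza--Dragoni instant; this is a routine but slightly delicate point, settled by the fact that such instants form a full-measure set.)

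This contradiction shows that $(\hat y,\hat w)$ is not an abnormal extremal with respect to $C$. The last assertion is then immediate: were $(\hat y,\hat w)$ to satisfy the infimum-gap condition, Corollary \ref{MP_INF_GAP} (recall that $\V$ is assumed abundant in $\W$) would make it an abnormal extremal with respect to $C$ --- which we have just excluded. The step I expect to be the main obstacle is the transfer estimate: the bound $c$ is merely integrable, so $M(S,s)\to\mathrm{id}$ and $\hat y(s)\to\hat y(S)$ only in an $L^{1}$-averaged sense rather than uniformly, and one must carefully synchronise the maximum-condition null set, the Scorza--Dragoni null sets, and the choice of control value supplied by $C$-needle-controllability.
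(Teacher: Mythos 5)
Your overall architecture is the one the paper uses --- argue by contradiction, build a covector from $\lambda(S)$, invoke $C$-needle-controllability, and transfer the resulting strict inequality to instants $s$ near $S$ so as to collide with the maximum condition $(ii)$ of Definition~\ref{abextremalityn} --- but there is a sign choice that actually removes the contradiction, and your transfer step is handled quite differently from (and less cleanly than) the paper's.

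On the sign: you set $\boldsymbol\xi:=-\lambda(S)$, which does place $\boldsymbol\xi$ in $C^{\perp}$ as the formal hypothesis of needle-controllability demands. But then $\lambda(s)\cdot f=-\,\boldsymbol\xi\cdot M(S,s)\cdot f$, so $(ii)$ says $\hat w(s)$ \emph{minimizes} $\w\mapsto\boldsymbol\xi\cdot M(S,s)\cdot f(s,\hat y(s),\w)$, and your transfer estimate makes $\hat w(\bar s)$ an approximate \emph{minimizer} of $\w\mapsto\boldsymbol\xi\cdot f(\bar s,\hat y(S),\w)$. This is \emph{not} incompatible with \eqref{nr}: applied with $\check\w=\hat w(\bar s)$, \eqref{nr} only asserts that $\boldsymbol\xi\cdot f(\bar s,\hat y(S),\hat w(\bar s))$ lies at least $\delta_1$ \emph{below the supremum} in $\w$, which is perfectly consistent with $\hat w(\bar s)$ sitting near the infimum. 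The contradiction one needs is that $\hat w(\bar s)$ is an approximate \emph{maximizer}, and that happens precisely for $\boldsymbol\xi:=+\lambda(S)$, which is what the paper takes. (There is admittedly an internal sign tension in the paper: $(iii)$ places $\lambda(S)$ in $-C^{\perp}$ while \eqref{nr} is phrased for $\boldsymbol\xi\in C^{\perp}$. But the logical shape of the argument unambiguously requires the plus sign; as written, your choice leaves you with a non-contradiction.)

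On the transfer: the paper avoids Gronwall entirely. Since the lift $(\hat y,\lambda)$ is absolutely continuous, $(\hat y(s),\lambda(s))\to(\hat y(S),\lambda(S))$ as $s\to S$; the paper asserts that \eqref{nr} persists with constant $\delta_1/2$ on a whole neighbourhood $U$ of $(\hat y(S),\lambda(S))$ in $T^{*}\M$ for a.e.\ $s\in[S-\delta_2,S]$, and then concludes in two lines by choosing $s$ close enough to $S$ that $(\hat y(s),\lambda(s))\in U$ and plugging $\check\w=\hat w(s)$ into the maximization relation. Your quantitative route produces an error bound of order $c(s)\Psi(s)$, and, as you yourself flag, this product need not vanish as $s\to S$ (for $c(s)=(S-s)^{-1/2}$ one has $c\Psi\equiv\mathrm{const}$), nor need $\{s:c(s)\Psi(s)<\eta\}$ accumulate at $S$; so the ``almost-minimizer at $\bar s$'' step is not actually established. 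The paper's route is shorter, though it should be said that the openness of $U$ for a.e.\ $s$ is asserted rather than proved, and under hypothesis (SH) the modulus of continuity of $p\cdot f(s,y,\w)$ in $(y,p)$ is governed by the very same merely-integrable $c(s)$; the obstruction you identified is therefore real, just less visible in the paper's phrasing.
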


\begin{proof}
Assume by contradiction that the extremal $(\hat y,\hat w)$ is abnormal, namely that
 there exists an absolutely continuous lift   $({\hat y,\lambda}):[0,S]\to T^*\M$  of $\hat y$ such that $\lambda\neq 0$, 
$
{\lambda}(S)\in -C^{\bot}
$, and  the inequality  
\bel{mpvero}
{\lambda}(s) \cdot  ({f}(s,\hat y(s),\w)  - {f}(s,\hat y(s),\hat w(s))) \leq 0
\eeq
holds true for almost  every $s\in [0,S]\backslash I_0$ and every $\w\in\FW$, 
$I_0$ having Lebesgue measure equal to zero.
Taking ${\boldsymbol\xi}:={\lambda}(S)$  in  \eqref{nr},  we deduce
 that there exist $\delta_1,\delta_2>0$  and a neighbourhood $U\subset T^*\N$ of $( \hat y(S),{\boldsymbol\xi})$ such that, for all $(y,p)\in U$, 
$$
\sup_{\w{\in{\FW}} } { p}\cdot ({f}(s,y,\w)  - {f}(s,y,\check\w)) > \frac{\delta_1}{2}\quad \forall \check\w\in \FW,\quad  {a.e.}\; {s\in [S-\delta_2,S]}.
$$
Now, by choosing $\varepsilon\in]0,\delta_2]$ sufficiently small, for every $s\in [S-\varepsilon,S]$ one   has $(\hat y(s),{\lambda}(s)) \in U$, so that 
$$
\sup_{\w{\in{\FW}} }{\lambda}(s)\cdot ({f}(s,\hat y(s),\w)  - {f}(s,\hat y(s),\check\w)) > \frac{\delta_1}{2} \quad \forall \check\w\in \FW, \quad \hbox{a.e.}\; {s\in [S-\varepsilon,S]}.
$$
In particular, for all $s\in[S-\varepsilon,S]\backslash  I_0$,
$$
\sup_{\w{\in{\FW}} } {\lambda}(s)\cdot ({f}(s,\hat y(s),\w)  - {f}(s,\hat y(s),\hat w(s))) > 0,
$$
which contradicts the maximization relation \eqref{mpvero}.

\end{proof}

\section{Proofs of the main results }
\subsection{Proof of the Geometric Principle  (Theorem   \ref{GEOM})}
%
%
By a basic result on control system  (see e.g. \cite{Scha}, \cite{BP}), ${\bf C}_{\mathfrak{w}_{1},...,\mathfrak{w}_{N} }^{s_{1},...,s_{N}}$ turns out to be 
  a QDQ approximating cone   to  the (local) extended reachable set $\mathcal{R}^{\hat w,r}_\W$ at $\hat y(S)$.\footnote{For instance: it is well-known that ${\bf C}_{\mathfrak{w}_{1},...,\mathfrak{w}_{N} }^{s_{1},...,s_{N}}$ is a Boltyanski approximating cone to  $\mathcal{R}^{\hat w,r}_\W$ at $\hat y(S)$  (see e.g. \cite{Suss1}). Furthermore, a Boltyanski approximating cone is clearly a QDQ approximating cone.}
More importantly,  Theorem \ref{apprpicc}  states  that  ${\bf C}_{\mathfrak{w}_{1},...,\mathfrak{w}_{N} }^{s_{1},...,s_{N}}$ is also a QDQ approximating cone to  the (local) original  reachable set $\mathcal{R}^{\hat w,r}_\V  \cup \{ \hat{y}(S)\} $ at $\hat y(S)$. 
Therefore, by Lemma \ref{lemma infgap}, the sets  $\Big(\mathcal{R}_\V^{\hat w,r}\cup \{ \hat{y}(S)\}\Big)$ and $\cruk$ are locally separated at $\hat{y}(S)$, which by Theorem \ref{teoteo}, $ i)$, implies that the cones   ${\bf C}_{\mathfrak{w}_{1},...,\mathfrak{w}_{N} }^{s_{1},...,s_{N}}$ and $C$ are not strongly transverse. Since linear separability is equivalent to non-transversality (Proposition \ref{teo3})  we have to prove that {\it these cones  are   not transverse as well}. Indeed, in view of Proposition 
\ref{teo2} the only case in which  they might happen to be transverse (and not strongly transverse) is the one in which the cones are complementary subspaces of $T_{\hat y(S)}\N$. 
 However, such an instance is excluded by Theorem \ref{propcomp} and the occurrence of an infimum gap. In fact, if ${\bf C}_{\mathfrak{w}_{1},...,\mathfrak{w}_{N} }^{s_{1},...,s_{N}}$ and $C$ satisfy ${\bf C}_{\mathfrak{w}_{1},...,\mathfrak{w}_{N} }^{s_{1},...,s_{N}}\oplus C=T_{\hat y(S)}\N$, then Theorem \ref{propcomp} assures the existence of a sequence $(y_k)_{k\in\mathbb{N}}\subset\mathcal{R}_\V\cap \cruk $ such that
$y_k\to 
\hat y(S)$, which contradicts  the fact that 
$(\hat y,\hat w)$ verifies the infimum gap condition. This concludes the proof.\qquad \qquad \qquad\square


 \subsection{Proof of the Normality No-Gap Criterion (Corollary
  \ref{MP_INF_GAP})}

  By Theorem \ref{GEOM}, the cones ${\bf C}_{\mathfrak{w}_{1},...,\mathfrak{w}_{N} }^{s_{1},...,s_{N}}$ and $C$  are linearly separable. This means that 
there exists  $\xi\in(T_{\hat{y}(S)}\mathcal{M})^*\backslash\{0\}$ such that  $\xi\in -C^{\perp}\cap \left( {\bf C}_{\mathfrak{w}_{1},...,\mathfrak{w}_{N} }^{s_{1},...,s_{N}}\right)^{\perp}$.
  Now let us set  $\lambda(s):= \xi\cdot M(S,s)$, where $M(S,s)$ is the fundamental  matrix defined in \eqref{linearized_problem},
 so that  
$$\lambda\neq 0, \qquad \lambda(S)\in -C^{\perp},\quad \ds \frac{d\lambda}{ds}(s)= -\lambda(s)\cdot\frac{\partial{f}}{\partial y}(s,\hat{y}(s),\hat{w}(s)), \quad\hbox{\rm for  a.e.}\,\,\, s\in[0,S].$$
By $\xi\in \left( {\bf C}_{\mathfrak{w}_{1},...,\mathfrak{w}_{N} }^{s_{1},...,s_{N}}\right)^{\perp}$, it follows that, for every $i=1,\ldots,N$, 
\bel{quasi}\begin{array}{l}
0\geq \xi \  \cdot  \left( M(S,s_i)\cdot \Big({f}(s_i,\hat{y}(s_i),\w_i) - 
 {f}(s_i,\hat{y}(s_i),\hat{w}(s_i)) \Big)\right) \\
 =\Big(\xi \cdot M(S,s_i)\Big)\  \cdot \ \Big( {f}(s_i,\hat{y}(s_i),\w_i) - 
 {f}(s_i,\hat{y}(s_i),\hat{w}(s_i)) \Big) \\ = \lambda(s_i)\cdot\Big({f}(s_i,\hat{y}(s_i),\w_i) - 
 {f}(s_i,\hat{y}(s_i),\hat{w}(s_i)) \Big).
  \end{array}\eeq
 Therefore the lift $(\hat y,\lambda)$ verifies ($i$)-($iv$)  of Definition \ref{abextremalityn}, except that  ($iii$) is verified only for every finite set of pairs $(s_i,\w_i)\in [0,S]\times \FW$, $i=1,\ldots, N$, such that 
  $ s_{1},...,s_{N}\in \mathrm{SD}\{{f}(\cdot,\cdot, \hat{w}(\cdot))\}\cap 
\mathrm{SD}\{{f}(\cdot,\cdot,\w_1)\}\cap\ldots\cap 
\mathrm{SD}\{{f}(\cdot,\cdot,\w_N)\}$,  $0< s_1<\ldots,< s_N\leq S$.
To conclude the proof  we have to show the validity of ($iii$) in the whole control value set $\FW$ and almost all times. This is achieved through non-empty intersection arguments borrowed from   those utilized in \cite{Suss1} to prove the Maximum Principle.

\subsubsection{The case of a finite subset of controls}
   Let us consider a {\it finite subset} of control values $\bar\FW \subseteq \FW$  and  let us set
 $$ E(\bar\FW) :=\ds
\bigcap_{w\in \bar\FW}\mathrm{SD}\{{f}(\cdot,\hat{y}(\cdot),\w)\}\bigcap \mathrm{SD}\{{f}(\cdot,\hat{y}(\cdot),\hat w(\cdot))\}\quad\Big(\subset [0,S]\Big).
$$ 
Since $\bar \FW$ is finite, $ E(\bar\FW)$ has measure equal to  $S$. 
 Therefore, by Lusin's theorem we can write
\bel{lusin}\bega{l}\ds
 \ds E(\bar\FW) = \bigcup_{j=0}^{\infty}E_j,
\enda\eeq
 where $E_0$ has zero measure and, for every $j$, the set $E_j$ is
 compact, and, for every $w\in \bar\FW$, the restrictions to $E_j$ of the    map
$$
s\mapsto r^{\w}(s):={f}(s, \hat y(s),\w)-{f}(s, \hat{y}(s),\hat{w}(s)), 
$$
is continuous.
 
  For every $j$ let $D_j$ be the set of  {\it density points}
 \footnote{We recall that an element $t\in B\subset \R$ is a {\it density point for $B$} if $$\lim_{\delta\to 0+} \frac{meas([t-\delta,t+\delta])}{2\delta} = .1$$} 
of
$E_j$. Since, for every natural number  $j$, $E_j$ and $D_j$ have the same measure,
  one
obtains that \footnote{For every measurable subset $A\subseteq [0,S]$, $meas(A)$ denotes the Lebesge measurable of $A$.}
$$
meas(E(\bar\FW)) = meas(D)
$$ where we have set  $D :=\displaystyle\cup_{j=0}^{\infty}D_j$ .

Now let $F$ be an arbitrary, non-empty,  subset of $D\times \bar\FW$, and let us define the  subset 
$\Lambda(F,\bar\FW)\subseteq(T_{\hat y (S)}\mathcal{M})^*$  by setting 
$$\Lambda(F,\bar\FW):=\left\{\bar\lambda\in (T_{\hat y (S)}\mathcal{M})^*, \quad |\bar\lambda|=1,\quad \bar\lambda\;\hbox{verifies  \;{\bf (P)}$_{F}$} \right\}, $$
where property {\bf (P)$_{F}$} is as follows:
\begin{small}
\begin{itemize}

\item[\,]{\bf Property (P)$_{F}$.}   The pair $(\hat y,\lambda)\in  W^{1,1}([0,S];\,T^*\mathcal{M})$  is a lift of   $\hat y(\cdot)$  such that:
\begin{itemize}
\item[(1)] 
$\lambda(S)=\bar\lambda \in -{C}^{\bot}  \,\,$
\item[(2)] $\ds \frac{d\lambda}{ds} =  -\ds\lambda\cdot\frac{\partial f}{\partial y}(s,\hat y(s),\hat w(s)),\qquad \mathrm{a.e}\;\;s\in[0,S];$

\item[(3)] $\lambda(s)\cdot {f}(s, \hat{y}(s),\hat{w}(s)) \geq  \lambda(s)\cdot {f}(s, \hat{y}(s),\w)$   

for every $(s,\w)\in F$.

\end{itemize}
\end{itemize}
\end{small}
  \vskip0.3truecm
   
   Notice that, for every subset $F\in D\times \bar\FW$,  $\Lambda(F,\bar
\FW)$  is  compact and, moreover,
$$
\Lambda(F_1\cup F_2,\bar\FW) = \Lambda(F_1,\bar\FW)\cap \Lambda(F_2,\bar\FW)
$$ for
all $F_1,F_2\in D\times \bar\FW$.
\vskip0.4truecm

By Theorem \ref{MP_INF_GAP}, $\Lambda(F,\bar \FW) \neq \emptyset$ as soon as  $F$ is {\it finite} and  can be written as 
$$
F=\{(s_1,\w_1),..., (s_m,\w_m)\} \qquad 0\leq s_1<...<s_i<...<
s_m<S,\; \w_i\in \bar\FW.
$$

{\bf Claim:} {\it  One has  $\Lambda(F,\bar\FW) \neq \emptyset$  even when $F$ is an {\em arbitrary 
finite} subset of $D\times \bar\FW$, namely $F$ can be written  as
$$
F=\{(s_1,\w_1),..., (s_m,\w_m)\} \qquad 0\leq s_1\leq ...\leq s_i \leq ... \leq
s_m, \; \w_i\in\FW.
$$
 }

Indeed, every $s_i$ belongs to a suitable $D_h$, which can be labelled as $D_{h(i)}$. Since
 $D_{h(i)}$ is made of density points, there exist sequences $(s_{i,j})$
such that $$s_{i,j}\in D_{h(i)}\quad \forall j,\qquad   s_i = \lim_{j\to
\infty} s_{i,j}, $$ and $$s_{1,j}<...<s_{m,j}\quad \forall j\in \mathbb{N}.$$ 
Set $F_j=
\{(s_{i,j},\w_1),..., (s_{m,j},\w_m)\}$ --so that $\Lambda(F_j,\bar\FW) \neq
\emptyset$-- and choose $\bar {\lambda}_j\in
\Lambda(F_j,\bar\FW)$. Since $|\bar{\lambda}_j|=1$ for all $j$,  by possibly taking   a subsequence we can assume that   $(\bar {\lambda}_j)_{j\in \mathbb{N}}$
converges to some $\bar{\lambda}$. For every $s\in [s_1,S]$, define the lifts  $(\hat{y},\bar{\lambda}), (\hat{y}, \bar{\lambda}_j)\in W^{1,1}([s_1, S]; T\mathcal{M})$ of $\hat{y}$  such that $\bar{\lambda}(S)=\bar{\lambda}$, $\bar{\lambda}_j(S)=\bar{\lambda}_j$ and both satisfying the equation 
$$\ds \frac{d\lambda}{ds} =  -\ds\lambda\cdot\frac{\partial f}{\partial y}(s,\hat y(s),\hat w(s)),\qquad \mathrm{a.e}\;\;s\in[s_1,S].$$
The mapping $s\mapsto \bar\lambda_j(s)$  satisfies the inequality
$$\bar\lambda_j(s_{i,j})\cdot {f}(s_{i,j}, \hat{y}(s_{i,j}),\hat{w}(s_{i,j})) \geq  \bar\lambda_j(s_{i,j})\cdot {f}(s_{i,j}, \hat{y}(s_{i,j}),\w)$$
 for all $j\in \mathbb{N}$, every $i=1,\dots, m$ and $\w \in \bar\FW$. 
Since,  for every $i=1,\dots m$, the map $s\mapsto r^{\w_{i}}(s):={f}(s, \hat y(s),\w_i)-{f}(s, \hat{y}(s),\hat{w}(s))$  is continuous on  $D_{h(i)}$, the function 
 $s\mapsto \bar{\lambda}_j(s)\cdot r^{\w_{i}}(s)$
  is also continuous  on  $D_{h(i)}$, so passing to the limit  we can conclude that
$$ \bar{\lambda}(s_i)\cdot {f}(s_i, \hat{y}(s_i),\hat{w}(s_i)) \geq  \bar{\lambda}(s_i)\cdot {f}(s_i, \hat{y}(s_i),\w) $$
for every $i=1,...,m$ and $\w \in \bar\FW$.
Since one also has  $0\neq \bar{\lambda}=\bar\lambda(S) \in - {C}^{\bot}$,
the  claim is proved.

\subsubsection{The general case of an infinite control set}  
Up to  now we have shown that, {\it if $\bar\FW$ is finite}, and $F\subset D\times\bar\FW $ is {\it finite} ---and we write $card (F) <\infty$---, 
then $\Lambda(F,\bar\FW)$ is a nonempty compact set.  { We now conclude the proof throgh a standard non-empty intersection argument (see e.g. \cite{Suss1}).
If we take a finite family $F^1,\dots, F^r \subset D\times\bar\FW $ such that $card (F_i) <\infty$ for every $i=1,\dots,r$, one has
$$
\Lambda(F^1,\bar\FW)\cap\dots\cap \Lambda(F^r,\bar\FW) = \Lambda(F^1\cup\dots\cup F^r, \bar\FW) \neq \emptyset,
$$
(for $card (F^1\cup\dots\cup F^r)<\infty$).
Hence, $$\{ \Lambda(F,\bar\FW)\quad |\quad F\subset D\times\bar\FW, \quad
\hbox{card} F < \infty\}
$$
is {\it a family of compact subsets such that each finite intersection is
nonempty}.  This implies that the (infinite)  intersection of {\it all} $ \Lambda(F,\bar\FW)$  such that $card F<\infty$  is
nonempty. Therefore 
$$
\Lambda(D\times\bar\FW, \FW) = \ds \Lambda\left(\bigcup_{card(F)<\infty}F, \FW\right) = \ds \bigcap_{card(F)
 < \infty}\Lambda(F,\bar\FW)
\neq \emptyset
$$

\vskip 1truecm 

 To end the proof in the general case when  $card (\FW)$ is infinite,  for any arbitrary subset $\hat\FW\subseteq \FW$  define
$$\Lambda(\hat \FW):=\left\{\bar\lambda\in (T_{\hat y (S)}\mathcal{M})^*, \quad |\bar\lambda|=1,\quad \bar\lambda\;\hbox{verifies  \;{\bf (PP)}$_{F}$} \right\},$$
 where property  {\bf (PP)}$_{\hat{F}}$ is as follows:

\begin{itemize}
\item[\,]{\bf (PP)$_{\hat{F}}$:} 
The pair $(\hat y,\lambda)\in  W^{1,1}([0,S];\,T^*\mathcal{M})$  is a lift of   $\hat y(\cdot)$  such that:
\begin{itemize}
\item[(1)] 
$\lambda(S)=\bar\lambda \in -{C}^{\bot}  \,\,$
\item[(2)] $\ds \frac{d\lambda}{ds} =  -\ds\lambda\cdot\frac{\partial f}{\partial y}(s,\hat y(s),\hat w(s)),\qquad \mathrm{a.e}\;s\in[0,S];$
\item[(3)] For each $\w\in \hat\FW$, there exists a subset of full measure $I_{\w}\subseteq [0,S]$ such that 
$$\lambda(s)\cdot {f}(s, \hat{y}(s),\hat{w}(s)) \geq  \lambda(s)\cdot {f}(s, \hat{y}(s),\w)$$   
for every $s\in I_{\w}$, $\w\in {\hat\FW}$.
\end{itemize} 
\end{itemize}

So, proving Theorem \ref{MP_INF_GAP} is equivalent to showing that 
\bel{W}
\Lambda( \FW)\neq \emptyset.
\eeq

Since \bel{intersection}
\Lambda( \FW) = \ds \bigcap_{card(\hat \FW) < \infty} \Lambda(\hat \FW) ,
\eeq once again we have to show that the (possibly infinite) family
$$\left\{\Lambda(\hat \FW),\quad card(\hat \FW) < \infty\right\}
$$ has non-empty intersection. 
This can  easily achieved by the same arguments as above. Indeed,
 $\Lambda(\hat \FW)$ is not empty and compact as soon as $\hat\FW$ is finite. 
Furthermore, for every $\FW_1,\FW_2\subseteq \FW$ one has
$$
\Lambda(\FW_1\cup \FW_2) = \Lambda(\FW_1)\cap \Lambda(\FW_2).
$$

In particular, the family $\left\{\Lambda(\hat \FW):\; card(\hat \FW) < \infty \right\}$ is made of compact subsets and satisfies the finite intersection property, that is, the intersection of any finite  finite subfamily $\left\{\Lambda(\hat \FW):\; card(\hat \FW) < \infty \right\}$ is not empty. Therefore,  it has non-empty intersection, namely
$$
\Lambda( \FW) = \ds \bigcap_{card(\hat \FW) < \infty} \Lambda(\hat \FW) \neq
\emptyset\,\,.
$$
This concludes the proof of Theorem \ref{MP_INF_GAP}.

 \section{An application to non-convex, unbounded,  problems}\label{Ex_Imp}

Impulsive optimal control problems  --where the dynamics  is {\it  unbounded}-- have been extensively studied together with  their applications  \cite{AR, AKP1, AKP3, BR, AB1, AB2, Dy1, Dy2, GS, KDPS, annachiara, MiRu, MR, SV, WZ}.	
The space-time representation (see \eqref{spnc} below)  can be regarded as an \textit{extension} of unbounded control systems. An important case  is the one of a minimum problem with  of control-affine dynamics:
\[
(P)\left\{ \begin{array}{l}
\,\,\,\,\,\,\,\,\,\,\,\,\,\,\,\,\,\mathrm{Minimize}
\,h(t_{2}, x(t_{2}),\eta(t_{2}))\vspace{0.5cm}\\

\mathrm{over\:}\; t_{2}\in \R,\; t_{2}>t_{1},\;(x, \eta,u) \in  AC([t_1,t_2],\N\times\R)\times L^1([t_1,t_2],{U})
\\\quad\mathrm{such\: that }
\vspace{0.3cm}\\

\left\{\begin{array}{l}\ds\frac{dx}{dt}(t)=f(t,x(t))+\sum_{j=1}^{m}g_{j}(t,x(t)) u^{j}(t)\quad\mathrm{a.e.\,\, t\in [t_1,t_2]}
\\ \ds\frac{d\eta}{dt}(t)=| u(t)| \quad\mathrm{a.e.\,\, t\in [t_1,t_2]}
\end{array}\right.
\vspace{0.3cm}\\

\ds (x(t_{1}),\eta(t_{1}))=(\bar{x}, 0),\quad (t_2, x(t_2),\eta(t_{2})) \in \bar{\cruk}\times [0,K]

\end{array} , \\
\right.
\]

Here  the  set  $U$ where the controls $u$ take values is {\it  unbounded}. Furthermore,  the state $x$ range over a  $n$-dimensional Riemannian manifold $\mathcal{M}$ of class $C^{2}$, and the time-dependent  vector fields $f,g_1,\ldots,g_m$  are  of class $C^1$ in $x$, measurable in $t$,  and uniformly bounded by a $L^1$ map. 
Moreover, the cost $h:\R\times \N\times\R\rightarrow\R$ is a  continuous  function, $(t_{1},\bar{x})\in \R\times\mathcal{M}$ is a fixed initial condition,  $K$ is a non negative fixed constant, possibly equal to $+\infty$, and  the end-point constraint $\bar{\cruk}\subseteq \R\times \R^n$ is a closed subset. 
 Notice incidentally that  the function $\eta(t)$  coincides with  the $L^1$-norm of the control function $u:=(u^{1},u^{2},...,u^{m})$ on the interval $[t_1,t]$.

The  gap-abnormality  criterion  for this kind of systems  (where one considers the space-time extension \eqref{spnc} below) has been already investigated  in the case when the set of controls ${U}$ { is a convex cone \cite{MRV}}. Actually, thanks to  Corollary \ref{MP_INF_GAP}  (see also  \cite{PRCDC}),
the main result in \cite{MRV}  can be extended to  the case in which the state ranges on  a Riemannian manifold.
 However the generalization made possible by Corollary \ref{MP_INF_GAP} allows one to go much further.
Indeed, in  what follows we are  able to deduce from Corollary \ref{MP_INF_GAP}
that  the gap-abnormality criterium holds true also in  the situation when the  control set  ${U}$ is unbounded but is  {\it    neither convex  nor a cone}. 

 More precisely, we will consider  the following two cases:
\begin{itemize} 
\item[{\sc Case (i)}] 
{\it{\bf  (Space-time convex extension)}  The  controls take values on a (necessarily unbounded) subset   ${U}\subseteq \R^{m}$ such
  that {\it ${\bf co}\,{U} $ is a (convex) cone of 
$ \R^{m}$}},   where we have used  ${\bf co}\,{E}$ to denote the convex hull of a subset $E\subseteq \R^{m}$;

For instance, one could consider the set $U=\mathbb{N}^m$, so that ${\bf co}(U) = [0,+\infty[^m$.

\item[{\sc Case (ii)}] {\it {\bf  (Space-time non-convex extension)} The  controls take values on a (necessarily unbounded) subset   ${U}\subseteq \R^{m}$ such
  that 
\bel{conic}
  (r,u)\in [0,+\infty[\times U \implies \exists \rho >r \ \hbox{s.t.} \ \rho u\in U
\eeq
Notice that, if for a given set $E$ we consider the ${\bf conic}(E): =\big\{ r e \  | \ (r,e)\in [0,+\infty[\times E\big\}$  ---a cone which we call the {\it conic envelope} of $E$---,  hypothesis 
  \eqref{conic}  implies that 
  $$
  \inf_{u\in U} d(u,{\bf conic}(U)) = 0.
  $$
For instance, one could consider the set $U=\big\{(n^2,0),  (0,-m^3) \ | \ m,n\in\mathbb{N}\big\}$, 
so that  ${\bf conic}\,(U) =  [0,+\infty[\times\{0\} \bigcup\{0\} \times ]-\infty,0].$}
\end{itemize}

\begin{remark} {\rm We will treat {\sc Case (i)} in detail, describing the extension to the convex space-time system obtained by both convexification of the dynamics  and the closure of suitably reparameterized processes. Instead, we will only 
suggest the needed changes to deal with  {\sc Case (ii)}, where the only extension  comes from  reparameterization.
 However, {\sc Case (ii)} is somehow more significative, in that it marks the most important improvement with respect to the former literature initiated by Warga's work.  Indeed, in this case not only the original dynamics  but  {\it also the extended  dynamics is non-convex}. This can be of interest in those application where  the convexification of the 
 dynamics is not needed (for instance because one gets existence of minima without invoking convexification).} 
 \end{remark}

 \subsection{{\sc Case (i)} (Space-time convex extension)} 
 In order to formulate  this problem by means of  the terminology adopted in Theorem \ref{MP_INF_GAP}, we need to embed our system into  a suitably extended one. To this aim we need  to perform both a `compactification' (to manage unboundedness) and a `convexification'.  
 Let us begin by setting 
$$\begin{array}{ll}A:= \left\{a=(a^1,\ldots a^{n+3})\in{[0,1]}^{n+3},\,\, \sum_{i=1}^{n+3}a^i = 1\right\} \qquad &\mathcal{A}:=L^1\left([0,S], A \right) \\
\mathbf{W}:=\{ (w^{0},w)\in [0,\infty)\times{\bf co}\,{U}:\quad w^{0}+|w|=1\}\qquad  &\hat{ \mathcal{W}}:=L^1\left([0,S], \mathbf{W}\right) \\
\mathbf{V}:=\{ (v^{0},v)\in (0,\infty)\times {U}:\quad v^{0}+|v|=1\} \quad &\hat{ \mathcal{V}}:=L^1\left([0,S], \mathbf{V}\right)\\D:= [-0.5, 0.5]
&\mathcal{D}:=L^1\big([0,S], D\big)
\end{array}
$$

$${\W}:=\mathcal{A}\times (\hat{\W})^{n+3}\times\mathcal{D},\qquad{\V}:=\{(1,0,\ldots,0)\}\times (\hat{\V})^{n+3}\times\mathcal{D},$$
and let us consider the optimal control problem
\bel{spnc}
\,(P)_\W^h\left\{ \begin{array}{l}
\,\,\,\,\,\,\,\,\,\,\,\,\,\,\,\,\,\mathrm{Minimize}
\,h\big(z^{0}(\hat S), z(\hat S),\nu(\hat S)\big)\vspace{0.3cm}\\

\mathrm{over\:}\;(z^{0}, z, \nu, a, (w_{1}^{0},w_{1}),\ldots, (w_{n+3}^{0},w_{n+3}), d)(\cdot) \in  AC([0,\hat S],\R\times\N\times\R)\times {\W}  \times \mathcal{D}
\quad\vspace{0.3cm}\\\mathrm{s.\: t., \,\,\,for\,\, a.e. \,\,\,\,s\in[0,\hat S], }\\\,\\
\left\{\begin{array}{l}
\ds\frac{dz^{0}}{ds}(s)= (1+d(s))\sum_{i=1}^{n+3}a^{i}(s)w^{0}_{i}(s) \qquad\\
\ds\frac{dz}{ds}(s)=(1+d(s))\sum_{i=1}^{n+3}a^{i}(s)\Big(f(z^{0}(s),z(s))w^{0}_{i}(s)+\sum_{j=1}^{m}g_{j}(z^{0}(s),z(s))w^{j}_{i}(s)\Big)\quad\qquad\\
\ds\frac{d\nu}{ds}(s)=(1+d(s))\sum_{i=1}^{n+3}a^{i}(s)|w_{i}(s)| \qquad\end{array}\right.\vspace{0.3cm}\\

(z^{0}(0), z(0),\nu(0))=(0, \bar{x}, 0),\qquad (z^{0}(\hat S), z(\hat S), \nu(\hat S)) \in \bar{\cruk}\times [0,K]

\end{array}\right. . \\
\eeq
 Accordingly,  a pair
$$
\Big(\big((z^{0}, z, \nu\big)\ , \ \big(a, (w_{1}^{0},w_{1}),\ldots, (w_{n+3}^{0},w_{n+3}), d\big)\Big)
$$
such that $\big(z^{0}, z, \nu\big)$ is the solution of the above control system corresponding to the {\it control} $\big(a, (w_{1}^{0},w_{1}),\ldots, (w_{n+3}^{0},w_{n+3}), d\big)$ is called {\it a process of $(P)_\W^h$.}
The embedding of the problem $(P)$ into $(P)_\W^h$ 
 is  as follows: fix $\hat S>0$, and, for every control  $u:[t_1,t_2^u]\to U$,  consider the function $\sigma_u:[t_1,t_2^u]\to [0,\hat S]
$ defined by  
\bel{rep}\sigma_u(t):=\frac{\hat S}{t_2^u + \|u\|_1}\int_{t_{1}}^t\Big(1+  \left| u(\tau) \right| \Big)\,d\tau =\frac{\hat S}{t_2^u + \|u\|_1} (t+\eta(t)).\eeq
Then  define
 $\mathcal{I}:\R\times AC([t_1,t_2],\N\times \R \times\R^{m})\rightarrow \R \times AC([0,S],\R\times\N\times\R\times \R \times\R^{m})$ by setting
$$\mathcal{I}(x,\eta,u):=\Big(\big((z^{0}, z, \nu\big)\ , \ \big(a, (w_{1}^{0},w_{1}),\ldots, (w_{n+3}^{0},w_{n+3}), d\big)\Big)$$
where, forall  $s\in[0,\hat S]$ and all $i=1,\ldots,m$, 
$$ \qquad (z^{0},z, \nu)(s):=(id,x,\eta)\circ \sigma^{-1}_{u}(s),\qquad \forall\, s\in[0,\hat S].$$
$$ \qquad a:=(1,0,\ldots,0),\quad d:= \frac{t_2^u + \|u\|_1}{\hat S}-1,\quad (w_{i}^{0},w_{i}):=\left(\frac{1}{1+|u|}\left( 1,u \right)\right)\,\circ\,\sigma^{-1}_{u}(s).$$
By a trivial use of the chain rule one  gets the  following result (see e.g. \cite{AR} for a similar embedding):
\begin{lma}\label{characterization}  The embedding $\cal I$ is injective
\footnote{Notice that the injectivity is a consequence of the fact that  $w^0_i(s) +  \left|w_i(s)\right|= 1$ for a.e. $s\in [0,S]$ and for $i=1,\ldots,m$.}.
Moreover, the  image space of the embedding $\cal I$ coincides with the set of all processes 
$\Big(\big((z^{0}, z, \nu\big)\ , \ \big(a, (w_{1}^{0},w_{1}),\ldots, (w_{n+3}^{0},w_{n+3}), d\big)\Big)$ such that $$\big(a, (w_{1}^{0},w_{1}),\ldots, (w_{n+3}^{0},w_{n+3}), d\big)\in \V$$
%
\end{lma}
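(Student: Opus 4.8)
The plan is to build an explicit two-sided inverse of $\mathcal I$ on the set of $\V$-processes of $(P)_\W^h$; injectivity and the description of the image will then come out together. Assume $t_1=0$ (otherwise translate time). Given a process $p=\big((z^0,z,\nu),(a,(w_1^0,w_1),\dots,(w_{n+3}^0,w_{n+3}),d)\big)$ of $(P)_\W^h$ whose control lies in $\V$ — so $a\equiv(1,0,\dots,0)$, each $(w_i^0,w_i)$ takes values in $\mathbf V$, and $d\in\mathcal D$ — I first observe that, because $a\equiv(1,0,\dots,0)$, only the block $(w_1^0,w_1)$ enters the state equations, the $z^0$-equation reading $\dot z^0=(1+d)w_1^0$. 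Since $1+d\ge\tfrac12$ and $w_1^0>0$ a.e.\ (values in $\mathbf V$), $z^0$ is an absolutely continuous strictly increasing bijection of $[0,\hat S]$ onto $[0,z^0(\hat S)]$; I set $t_2:=z^0(\hat S)$, $\sigma:=(z^0)^{-1}\colon[0,t_2]\to[0,\hat S]$, and
$$x:=z\circ\sigma,\qquad \eta:=\nu\circ\sigma,\qquad u:=\frac{w_1}{w_1^0}\circ\sigma .$$
A chain-rule computation, using $\sigma'=\big((1+d)w_1^0\big)^{-1}\!\circ\sigma$ and $z^0\circ\sigma=\mathrm{id}$, then shows that $(x,\eta,u)$ satisfies the control system of $(P)$ on $[0,t_2]$ with $(x(0),\eta(0))=(\bar x,0)$, and that the terminal condition $(t_2,x(t_2),\eta(t_2))\in\bar\cruk\times[0,K]$ is inherited from the one on $(z^0,z,\nu)(\hat S)$. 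Thus $\mathcal J(p):=(t_2,x,\eta,u)$ is a process of $(P)$, and this is the candidate inverse.

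Next I would check that $\mathcal I$ and $\mathcal J$ are mutually inverse. Here the normalization $w^0+|w|=1$ in $\mathbf V$ (and $\mathbf W$) does the work: together with $u=w_1/w_1^0\circ\sigma$ it gives $1+|u|=(w_1^0)^{-1}\!\circ\sigma$, so comparing $\sigma'$ above with the explicit form $\sigma_u'(t)=\tfrac{\hat S}{t_2+\|u\|_1}(1+|u(t)|)$ identifies $\sigma$ with $\sigma_u$ and the constant $d$ with $\tfrac{t_2+\|u\|_1}{\hat S}-1$; reading off the definition of $\mathcal I$ then reproduces $z^0=\mathrm{id}\circ\sigma_u^{-1}$, $(z,\nu)=(x,\eta)\circ\sigma_u^{-1}$, and $(w_i^0,w_i)=\tfrac1{1+|u|}(1,u)\circ\sigma_u^{-1}$ for every $i$, i.e.\ $\mathcal I(\mathcal J(p))=p$, while running the same computation backwards gives $\mathcal J(\mathcal I(t_2,x,\eta,u))=(t_2,x,\eta,u)$ (one uses $w^0+|w|=1$ to evaluate $t_2+\|u\|_1=(1+d)\hat S$, hence $d\in[-\tfrac12,\tfrac12]$ on the locally relevant range). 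Injectivity of $\mathcal I$ follows, and so does the identification of its image: ``$\supseteq$'' is $\mathcal I\circ\mathcal J=\mathrm{id}$, and ``$\subseteq$'' is the direct verification that for any process $(t_2,x,\eta,u)$ of $(P)$ the control assigned by $\mathcal I$ has $a\equiv(1,0,\dots,0)$, has each block $(w_i^0,w_i)=\tfrac1{1+|u|}(1,u)\circ\sigma_u^{-1}$ lying in $\mathbf V$ (use $w_i^0>0$, $w_i^0+|w_i|=1$, $w_i/w_i^0=u\circ\sigma_u^{-1}\in U$), has $d$ equal to the constant $\tfrac{t_2+\|u\|_1}{\hat S}-1\in[-\tfrac12,\tfrac12]$, and that $(z^0,z,\nu)=(\mathrm{id},x,\eta)\circ\sigma_u^{-1}$ solves the extended dynamics with this control and meets the endpoint constraint.

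The only genuine computations are the two chain-rule reparameterization checks, which become routine once $\sigma_u'(t)=\tfrac{\hat S}{t_2+\|u\|_1}(1+|u(t)|)$ is recorded and inverted; I expect the main care to be needed in the mutual-inversion bookkeeping. One must keep track of the fact that, once $a\equiv(1,0,\dots,0)$, the blocks $(w_i^0,w_i)$ with $i\ge2$ do not enter the state equations — so they are recovered (from $u$) only up to this inert freedom — and that the scalar $d$ encodes precisely the time-reparametrization which, on the side of $(P)$, corresponds to the free terminal time $t_2$. It is the constraint $w^0+|w|=1$ that removes any remaining ambiguity: it makes $u\mapsto\tfrac1{1+|u|}(1,u)$ a bijection of $U$ onto $\mathbf V$ with inverse $(v^0,v)\mapsto v/v^0$, and it simultaneously pins down the time change $\sigma_u$ and the value of $d$, so that no data relevant to the process of $(P)$ is lost in either direction — which is exactly what makes $\mathcal I$ a bijection onto the set of $\V$-processes.
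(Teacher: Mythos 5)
The paper gives no proof of Lemma~\ref{characterization} — it is attributed to ``a trivial use of the chain rule'' together with a pointer to \cite{AR} — so there is no paper argument to line yours up against, and the question is purely whether your argument is sound. It is not, at the one place that matters for the ``image'' half. You claim $\mathcal I\circ\mathcal J=\mathrm{id}$ on \emph{all} $\mathcal V$-processes, but that identity fails on the full class for two separate reasons. First, $d\in\mathcal D=L^1([0,\hat S],[-0.5,0.5])$ is an arbitrary integrable function, whereas $\mathcal I$ always outputs the \emph{constant} $d=\tfrac{t_2^u+\|u\|_1}{\hat S}-1$. If you actually unfold your comparison of derivatives, with $\sigma:=(z^0)^{-1}$ you get $\sigma'(t)=\bigl[(1+d(\sigma(t)))\,w_1^0(\sigma(t))\bigr]^{-1}$ and $\sigma_u'(t)=\tfrac{\hat S}{t_2+\|u\|_1}\bigl[w_1^0(\sigma(t))\bigr]^{-1}$; these agree precisely when $1+d\circ\sigma$ equals the constant $\tfrac{t_2+\|u\|_1}{\hat S}$, i.e.\ precisely when $d$ is a.e.\ constant. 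For a non-constant $d$ you get $\sigma\neq\sigma_u$, the trajectory $(z^0,z,\nu)$ of $\mathcal I(\mathcal J(p))$ is a genuinely different reparametrization than the one in $p$, and the claimed identity breaks. Throughout your write-up you refer to $d$ as ``the constant'' and ``the scalar'', which tacitly restricts to a sub-class while the lemma is stated for all of $\mathcal V$. Second, you yourself note that when $a\equiv(1,0,\dots,0)$ the blocks $(w_i^0,w_i)$ for $i\geq 2$ are dynamically inert, yet $\mathcal I$ always reproduces all of them equal to the first block, so a $\mathcal V$-process with distinct inert blocks is not in the image of $\mathcal I$. You flag this as ``inert freedom'' but then still assert $\mathcal I(\mathcal J(p))=p$, which is internally inconsistent.

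What your computation \emph{does} establish cleanly is injectivity of $\mathcal I$ (the normalization $w^0+|w|=1$ lets you recover $u$ from any single block, and $\dot z^0=(1+d)w_1^0>0$ lets you recover the time change, exactly as the paper's footnote hints), together with the fact that the image of $\mathcal I$ is the sub-class of $\mathcal V$-processes with \emph{constant} $d$ and with all blocks $(w_i^0,w_i)$ equal. That is strictly smaller than all of $\mathcal V$, so the ``image equals all $\mathcal V$-processes'' claim cannot be obtained by your two-sided-inverse route — and, read literally against the paper's own definitions of $\mathcal D$ and $\mathcal V$, it does not appear to hold as stated. The downstream identification of $(P)$ with $(P)_\mathcal V^h$ only needs the induced correspondence between reachable end-points and costs, which \emph{is} recovered (every $\mathcal V$-trajectory coincides with the trajectory of some process in the image of $\mathcal I$, since the inert blocks and a non-constant $d$ affect only the control component, not the end-point), but a careful proof must either quotient out the inert data or restate the lemma at the level of end-points and costs; your argument, as written, overclaims precisely at the two points above.
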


Thanks to  Lemma \ref{characterization}
we can identify the original problem $(P)$ with the problem 

\[
\,(P)_\V^h\left\{ \begin{array}{l}
\,\,\,\,\,\,\,\,\,\,\,\,\,\,\,\,\,\mathrm{Minimize}
\,h\big(z^{0}(\hat S), z(\hat S),\nu(\hat S)\big)\vspace{0.3cm}\\

\mathrm{over\:}\;(z^{0}, z, \nu, a, (w_{1}^{0},w_{1}),\ldots, (w_{n+3}^{0},w_{n+3}), d)(\cdot) \in  AC([0,\hat S],\R\times\N\times\R)\times {\V}  \times \mathcal{D}
\quad\vspace{0.3cm}\\\mathrm{s.\: t., \,\,\,for\,\, a.e. \,\,\,\,s\in[0,\hat S], }\\\,\\
\left\{\begin{array}{l}
\ds\frac{dz^{0}}{ds}(s)= (1+d(s))\sum_{i=1}^{n+3}a^{i}(s)w^{0}_{i}(s) \qquad\vspace{0.1cm}\\
\ds\frac{dz}{ds}(s)=(1+d(s))\sum_{i=1}^{n+3}a^{i}(s)\Big(f(z^{0}(s),z(s))w^{0}_{i}(s)+\sum_{j=1}^{m}g_{j}(z^{0}(s),z(s))w^{j}_{i}(s)\Big)\quad\qquad\vspace{0.3cm}\\
\ds\frac{d\nu}{ds}(s)=(1+d(s))\sum_{i=1}^{n+3}a^{i}(s)|w_{i}(s)| \qquad\end{array}\right.\vspace{0.3cm}\\

(z^{0}(0), z(0),\nu(0))=(0, \bar{x}, 0),\qquad (z^{0}(\hat S), z(\hat S), \nu(\hat S)) \in \bar{\cruk}\times [0,K]

\end{array}\right. . \\
\]
%
We can now  apply the theory  developed in the previous sections.
In view of the sufficient condition provided by Theorem \ref{Kaskosz}, it is trivial to verify that {\it the family of controls  $\V$ is  abundant in  $\W$ w.r.t. the dynamics of problem $(P)_{\W}^h$.}
Therefore, by
Corollary \ref{MP_INF_GAP},
one obtains the following  infimum-gap result:

\begin{thm}\label{gap_nc}
Consider a feasible extended process $$\Big(\big(y^{0},y,\nu\big)\ , \ \big(\hat a, (\hat w_{1}^{0},\hat w_{1}),\ldots, (\hat w_{n+3}^{0},\hat w_{n+3}), \hat d \big)\Big), \quad \hat d \equiv 0,$$ and assume that it satisfies the infimum gap condition.  Then, for all  approximating cones     $C$  to $\cruk :=\bar{\cruk}\times [0,K]$ at $(\hat{y}^{0}, \hat{y}, \hat{\nu})({\hat S})$,    there exist a number $\beta\leq0$,
 an absolutely continuous    path $(\lambda^{0}, \lambda, \lambda^{\nu})\in  W^{1,1}([0,{\hat S}];\,\rr^{(1+n+1)})$ and a zero-measure subset $I_0$  such that the following conditions hold true:

\begin{itemize}
\item[$(i)\;$] $(\lambda^{0}, \lambda, \lambda^{\nu})\neq 0\qquad$
\item[$(ii.1)\;$] $ \ds \frac{d\lambda^{0}}{ds}(s)= -\lambda(s)\cdot\left[\sum_{i=1}^{n+3}\hat{a}^{i}(s)\left(\frac{\partial f}{\partial y^{0}}(\hat{y}^{0}(s),\hat{y}(s))\hat{w}^{0}_{i}(s)+\sum_{j=1}^{m}\frac{\partial g_{j}}{\partial y^{0}}(\hat{y}^{0}(s),\hat{y}(s))\hat{w}^{j}_{i}(s)\right)\right]\,\,\,\,$
 \item[$(ii.2)\;$] $ \ds \frac{d\lambda}{ds}(s)= -\lambda(s)\cdot\left[\sum_{i=1}^{n+3}\hat{a}^{i}(s)\left(\frac{\partial f}{\partial y}(\hat{y}^{0}(s),\hat{y}(s))\hat{w}^{0}_{i}(s)+\sum_{j=1}^{m}\frac{\partial g_{j}}{\partial y}(\hat{y}^{0}(s),\hat{y}(s))\hat{w}^{j}_{i}(s)\right)\right]\,\,\,\,
   \; \vspace{0.3cm}\\ a.e.\; s\in [0,{\hat S}]$

 \item[$(iii)\;$] \begin{small}
 $\ds \left(1+d\right)\sum_{i=1}^{n+3}a^{i}\Big[\lambda^{0}(s)w_{i}^{0}+\lambda(s)\cdot\Big(\hat f(s)w_{i}^{0}+\sum_{j=1}^{m}\hat g_{j}(s)w_{i}^{j}\Big)+  \beta |w_{i}| \Big]\\  
 \leq \ds\sum_{i=1}^{n+3}\hat{a}^{i}(s)\Big[ \lambda^{0}(s)\hat{w}^{0}_{i}(s)+\lambda(s)\cdot\Big(\hat f(s)\hat{w}^{0}_{i}(s)+\sum_{j=1}^{m}\hat g_{j}(s)\hat{w}^{j}_{i}(s)\Big)+  \beta|\hat{w}_{i}(s)|\Big]  $
\end{small}

 for every $(w^{0},w,d)\in \mathbf{W}\times[-0.5,0.5]\,$ and $\,s\in [0,{\hat S}]\backslash I_0$\footnote{We have set $\hat{f}(s):= f(\hat{y}^{0}(s),\hat{y}(s))$, $\hat{g}_j(s):= g_{j}(\hat{y}^{0}(s),\hat{y}(s))$, for all $s\in [0,\hat S]$ and $i=1,\ldots,m$}
 
\item[$(iv)\;$] $(\lambda^{0}({\hat S}),\lambda({\hat S}),\beta)\in- {C}^{\perp}.$
 \end{itemize}
\end{thm}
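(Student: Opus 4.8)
The plan is to recognize the problem $(P)_\W^h$ of \eqref{spnc} as a concrete instance of the abstract extended optimal control problem covered by Corollary~\ref{MP_INF_GAP}, and then to transcribe the abstract conclusion into the concrete conditions $(i)$--$(iv)$.

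First I would check that the extended system in \eqref{spnc} satisfies Hypothesis~(SH) on the state manifold $\rr\times\N\times\rr$: its right-hand side $\tilde f$ is assembled from $f,g_1,\dots,g_m$, which are $C^1$ in the spatial variables, measurable in $s$, and uniformly bounded together with their spatial differentials by an $L^1$ function; since the control value set $A\times\mathbf{W}^{n+3}\times D$ is a bounded metric space, $\tilde f$ and $D\tilde f$ are then bounded by a fixed $L^1$ function, and $\tilde f$ is jointly continuous in state and control. Next I would verify that $\V$ is abundant in $\W$ through Proposition~\ref{Kaskosz}: the family $\V$ is closed under concatenation (it is an $L^1$-product whose first factor is the constant $(1,0,\dots,0)$ and whose remaining factors are $L^1$-spaces over full value sets), and it is $d_f$-dense in $\W$. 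This last point is the only genuinely analytic step: it is the classical relaxation density estimate for control-affine systems, the auxiliary $a$-channel together with the $n+3$ replicas $(w_i^0,w_i)$ being exactly the Carath\'eodory device that realizes an arbitrary measurable selection of the convex hull of the velocity set in $\rr^{1+n+1}$ as a trajectory of the convexified dynamics, while such trajectories are in turn uniformly approximated by trajectories driven by $U$-valued controls with $v^0>0$.

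With abundance in hand, Corollary~\ref{MP_INF_GAP} gives at once that, since the process in the statement satisfies the infimum gap condition, it is an abnormal extremal (Definition~\ref{abextremalityn}) with respect to every QDQ approximating cone $C$ to $\cruk=\bar\cruk\times[0,K]$ at the terminal point; that is, there is a nonzero absolutely continuous lift $(\lambda^0,\lambda,\lambda^\nu)\in W^{1,1}([0,\hat S];\rr^{1+n+1})$ satisfying the adjoint equation, the pointwise maximum condition, and the transversality inclusion $(\lambda^0(\hat S),\lambda(\hat S),\lambda^\nu(\hat S))\in-C^\perp$. Decoding these: since $\tilde f$ does not depend on $\nu$, the $\nu$-line of the adjoint equation is $\tfrac{d\lambda^\nu}{ds}=0$, so $\lambda^\nu$ is a constant, which we call $\beta$; the $(z^0,z)$-block of the adjoint equation, evaluated along $\hat d\equiv 0$ (so the factor $1+\hat d$ equals $1$), is precisely $(ii.1)$--$(ii.2)$; writing the maximum condition $\max_{\mathfrak w}\lambda(s)\cdot\tilde f(s,\hat y(s),\mathfrak w)=\lambda(s)\cdot\tilde f(s,\hat y(s),\hat w(s))$ out over the control values $\mathfrak w=(a,(w_i^0,w_i)_i,d)\in A\times\mathbf{W}^{n+3}\times D$ and substituting $\lambda^\nu\equiv\beta$ yields exactly $(iii)$; $(i)$ is the non-triviality of $(\lambda^0,\lambda,\lambda^\nu)$, and $(iv)$ is the transversality inclusion.

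It remains to check the sign $\beta\le 0$, which I would read off from $(iv)$: choosing $C$ of product form $C'\times C_\nu$ with $C_\nu$ a QDQ approximating cone to $[0,K]$ at $\nu(\hat S)$, and using that $\nu$ is nondecreasing with $\nu(0)=0$, the cone $C_\nu$ contains the direction $-1$ as soon as $\nu(\hat S)>0$, so every element of $C_\nu^\perp$ is nonpositive; since $\beta\in-C_\nu^\perp$, this forces $\beta\le 0$ (the degenerate case $\nu(\hat S)=0$, which means the original control vanishes identically, may be set aside). The only part of the argument that is not mere bookkeeping is the $d_f$-density of $\V$ in $\W$ underlying abundance --- precisely the reason the model \eqref{spnc} is equipped with the auxiliary $a$- and $d$-channels and the $n+3$ copies of the control; once that is granted, Theorem~\ref{gap_nc} is just the specialization of Corollary~\ref{MP_INF_GAP} to this extension.
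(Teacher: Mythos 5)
Your proposal takes essentially the same route as the paper, whose entire argument for this theorem is a single sentence: note that $\V$ has the concatenation property and is $d_f$-dense in $\W$, so abundance follows from Proposition~\ref{Kaskosz}, and then apply Corollary~\ref{MP_INF_GAP}. Everything else in your write-up --- checking Hypothesis~(SH) for the lifted system, identifying $\lambda^\nu$ as the constant $\beta$ because $\tilde f$ is $\nu$-independent, and transcribing the abstract abnormal-extremal conditions of Definition~\ref{abextremalityn} into $(i)$--$(iv)$ --- is bookkeeping that the paper leaves implicit, and your version of it is sound. Your remark on $\beta\le 0$ (reading it off from a product-form cone $C'\times C_\nu$, with the degenerate case $\hat\nu(\hat S)=0$ set aside) fills in a sign argument the paper does not spell out either; it is consistent with the statement's intent.
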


 \subsection{{\sc Case (ii)} (Space-time  non-convex extension)}

Let us recall that we are assuming that 
\eqref{conic}, namely
$$  (r,u)\in [0,+\infty[\times U \implies \exists \rho >r \ \hbox{s.t.} \ \rho u\in U.
$$

Unlike the previous case, we are not going to convexify the dynamics, while we will consider just the impulsive extension.  Without repeating all steps, we just observe that the sought extension is obtained by
neglecting the sets $A$ and $\mathcal{A}$, and by replacing 
$\mathbf{W}$ with the (generally non-convex) set $ \mathbf{W}^{nc} := 
\{ (w^{0},w)\in [0,\infty)\times{U}:\quad w^{0}+|w|=1\},
$
respectively.  In turn, problem $(P)_\W^h$ simplifies into the following non-convex problem $(P^{nc})_{\W}^{h}$:

\bel{spnc2}
\,(P^{nc})_{\W}^{h}\left\{ \begin{array}{l}
\,\,\,\,\,\,\,\,\,\,\,\,\,\,\,\,\,\mathrm{Minimize}
\,h\big(z^{0}(\hat S), z(\hat S),\nu(\hat S)\big)\vspace{0.3cm}\\

\mathrm{over\:}\;(z^{0}, z, \nu, (w^{0},w),d)\in  AC([0,\hat S],\R\times\N\times\R)\times {\W}  \times \mathcal{D}
\quad\vspace{0.3cm}\\\mathrm{s.\: t., \,\,\,for\,\, a.e. \,\,\,\,s\in[0,\hat S], }\\\,\\
\left\{\begin{array}{l}
\ds\frac{dz^{0}}{ds}(s)= (1+d(s))w^{0}(s) \qquad\\
\ds\frac{dz}{ds}(s)=(1+d(s))\Big(f(z^{0}(s),z(s))w^{0}(s)+\sum_{j=1}^{m}g_{j}(z^{0}(s),z(s))w^{j}(s)\Big)\quad\qquad\\
\ds\frac{d\nu}{ds}(s)=(1+d(s))|w(s)| \qquad\end{array}\right.\vspace{0.3cm}\\

(z^{0}(0), z(0),\nu(0))=(0, \bar{x}, 0),\qquad (z^{0}(\hat S), z(\hat S), \nu(\hat S)) \in \bar{\cruk}\times [0,K]

\end{array}\right. . \\
\eeq

The other objects simplify accordingly, and, still because of the concatenation property, {\it the resulting family $\V$ is abundant in the resulting  $\W$:} 
 Therefore, by applying the infimum-gap result stated in Corollary \ref{MP_INF_GAP} we get:

\begin{thm}\label{gap_nc}
Consider a feasible extended process $$\Big(\big(y^{0},y,\nu\big)\ , \ \big((\hat w^{0},\hat w), \hat d \big)\Big), \quad \hat d \equiv 0,$$ and assume that it satisfies the infimum gap condition.  Then, for all QDQ  approximating cones     $C$  to $\cruk :=\bar{\cruk}\times [0,K]$ at $(\hat{y}^{0}, \hat{y}, \hat{\nu})({\hat S})$,    there exist a number $\beta\leq0$,
 an absolutely continuous    path $(\lambda^{0}, \lambda, \lambda^{\nu})\in  W^{1,1}([0,{\hat S}];\,\rr^{(1+n+1)})$ and a zero-measure subset $I_0$  such that the following conditions hold true:

\begin{itemize}
\item[$(i)\;$] $(\lambda^{0}, \lambda, \lambda^{\nu})\neq 0  ;\qquad$
\item[$(ii)\;$] $ \ds \frac{d\lambda^{0}}{ds}(s)= -\lambda(s)\cdot\left(\frac{\partial f}{\partial y^{0}}(\hat{y}^{0}(s),\hat{y}(s))\hat{w}^{0}(s)+\sum_{j=1}^{m}\frac{\partial g_{j}}{\partial y^{0}}(\hat{y}^{0}(s),\hat{y}(s))\hat{w}^{j}(s)\right)$

 $ \ds \frac{d\lambda}{ds}(s)= -\lambda(s)\cdot\left(\frac{\partial f}{\partial y}(\hat{y}^{0}(s),\hat{y}(s))\hat{w}^{0}_{i}(s)+\sum_{j=1}^{m}\frac{\partial g_{j}}{\partial y}(\hat{y}^{0}(s),\hat{y}(s))\hat{w}^{j}_{i}(s)\right),\,\,\,\,
   \; \vspace{0.3cm}$
   
    for a.e. $s\in [0,{\hat S}]$;


 \item[$(iii)\;$] \begin{small}
 $\ds \left(1+d\right)\Big[\lambda^{0}(s)w^{0}+\lambda(s)\cdot\Big(\hat f(s)w^{0}+\sum_{j=1}^{m}\hat g_{j}(s)w^{j}\Big)+  \beta |w| \Big]$ 
 
 $\qquad\qquad\qquad\qquad\qquad
 \leq \ds\Big[ \lambda^{0}(s)\hat{w}^{0}(s)+\lambda(s)\cdot\Big(\hat f(s)\hat{w}^{0}(s)+\sum_{j=1}^{m}\hat g_{j}(s)\hat{w}^{j}(s)\Big)+  \beta|\hat{w}(s)|\Big]  $
\end{small}

 for every $(w^{0},w,d)\in \mathbf{W}^{nc}\times[-0.5,0.5]\,$ and $\,s\in [0,{\hat S}]\backslash I_0$
 
\item[$(iv)\;$] $(\lambda^{0}({\hat S}),\lambda({\hat S}),\beta)\in- {C}^{\perp}.$

 \end{itemize}
\end{thm}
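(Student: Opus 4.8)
The statement is obtained by specializing the Normality No-Gap Criterion (Corollary \ref{MP_INF_GAP}) to the abstract control system underlying the non-convex space-time problem $(P^{nc})_{\W}^{h}$ of \eqref{spnc2}. The plan is: (a) recast that system in the framework of Sections 3--4; (b) check that its data satisfy Hypothesis (SH) and that $\V$ is abundant in $\W$; (c) apply Corollary \ref{MP_INF_GAP} to deduce that the gap-generating process is an abnormal extremal, for every QDQ approximating cone $C$ to the target; and (d) translate Definition \ref{abextremalityn} into the coordinates $(z^{0},z,\nu)$ to read off (i)--(iv). Concretely: the state manifold is the $C^{2}$ Riemannian manifold $\R\times\N\times\R$ with coordinates $(z^{0},z,\nu)$, the control-value set is $\FW:=\mathbf{W}^{nc}\times[-\tfrac12,\tfrac12]$, the (reparameterized-time-autonomous) dynamics is the right-hand side of the differential system in \eqref{spnc2}, the target is $\cruk=\bar{\cruk}\times[0,K]$, and the original and extended control families are $\V$ and $\W$ as introduced just before \eqref{spnc2}.

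The first point to verify is abundance. The family $\V$ satisfies the concatenation property of Definition \ref{concat}: membership in $\V$ is described by pointwise constraints on the control (essentially $(v^{0},v)\in\mathbf{V}$, so $v^{0}>0$ and $v\in U$, together with $d$ valued in $[-\tfrac12,\tfrac12]$), and such constraints are plainly stable under the concatenation $v_{1}\chi_{[0,\bar s[}+v_{2}\chi_{[\bar s,S]}$. Moreover $\V$ is dense in $\W$ for the pseudo-metric $d_{f}$: this is precisely where hypothesis \eqref{conic} is used, since it provides, along every ray through $U$, controls $u\in U$ of arbitrarily large modulus, whose normalized images $\tfrac{1}{1+|u|}(1,u)$ approximate the purely impulsive values of $\mathbf{W}^{nc}$; a Gronwall estimate for \eqref{spnc2} then upgrades this pointwise approximation to a $d_{f}$-approximation of the corresponding trajectories, as in the constructions of \cite{MRV, AR}. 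By Proposition \ref{Kaskosz}, $\V$ is therefore abundant in $\W$. Hypothesis (SH) holds: $f$ and the $g_{j}$ are $C^{1}$ in the state variable $z$, the augmented field is affine --- hence continuous --- in the control $((w^{0},w),d)$ and measurable in $s$, and the bound (SH)(ii) follows from $w^{0}+|w|=1$, $|d|\le\tfrac12$ and the $L^{1}$-bound on $f,g_{j}$, taken in the local form near $\hat y([0,S])$ allowed by the remark following Hypothesis (SH). The one delicate point is that $z^{0}$ now plays the role of the original time, so the field is merely measurable in $z^{0}$; this is handled exactly as in \cite{MRV, AR}, via the local version of (SH) and the Scorza-Dragoni framework of Section 2.

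With these hypotheses granted, Corollary \ref{MP_INF_GAP} applies: since $(\hat y,\hat w)$ satisfies the infimum-gap condition, for every QDQ approximating cone $C$ to $\cruk$ at $(\hat y^{0},\hat y,\hat\nu)(\hat S)$ the process is an abnormal extremal with respect to $C$, in the sense of Definition \ref{abextremalityn}. This yields a non-zero adjoint lift with components $(\lambda^{0},\lambda,\lambda^{\nu})\in W^{1,1}([0,\hat S];\R^{1+n+1})$ satisfying the adjoint equation $\tfrac{d}{ds}(\lambda^{0},\lambda,\lambda^{\nu})=-(\lambda^{0},\lambda,\lambda^{\nu})\cdot\partial_{(z^{0},z,\nu)}f(s,\hat y(s),\hat w(s))$, the maximization relation of Definition \ref{abextremalityn}$(ii)$, and $(\lambda^{0},\lambda,\lambda^{\nu})(\hat S)\in-C^{\perp}$. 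It remains to read off (i)--(iv). Because the right-hand side of \eqref{spnc2} does not depend on $\nu$, the $\nu$-component of the adjoint equation is $\tfrac{d\lambda^{\nu}}{ds}=0$, so $\lambda^{\nu}$ is constant; setting $\beta:=\lambda^{\nu}(\hat S)$ gives the non-triviality (i) and the adjoint equations (ii), while substituting $\lambda^{\nu}\equiv\beta$ and $\hat d\equiv 0$ in the maximization relation gives (iii). Condition (iv) is just $(\lambda^{0}(\hat S),\lambda(\hat S),\beta)\in-C^{\perp}$, and the sign $\beta\le 0$ follows from this transversality relation together with the structure of the terminal constraint $\nu(\hat S)\in[0,K]$ and the monotonicity $\dot{\hat\nu}=|\hat w|\ge 0$, $\hat\nu(0)=0$, which forces the $\nu$-projection of a QDQ approximating cone to $\cruk$ at $(\hat y^{0},\hat y,\hat\nu)(\hat S)$ to lie in $]-\infty,0]$ (compare \cite{MRV}).

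The essential difficulty is not the invocation of Corollary \ref{MP_INF_GAP}, which is bookkeeping once the abstract set-up is fixed, but the verification that $\V$ is dense in $\W$ for $d_{f}$: one must show that ordinary space-time controls genuinely $C^{0}$-approximate the reparameterized impulsive trajectories of the non-convex extension, and this is where hypothesis \eqref{conic} does its work and where the Gronwall-type estimate has to be carried out in detail. The regularity bookkeeping forced by promoting the original time $t$ to a state variable $z^{0}$ entering $f,g_{j}$ only measurably is a secondary, routine point, dealt with via the local form of (SH).
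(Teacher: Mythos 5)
Your argument follows the same path as the paper's very terse proof of Theorem \ref{gap_nc}: recast $(P^{nc})_\W^h$ in the abstract framework of Sections 3--4, verify abundance of $\V$ in $\W$ via Proposition \ref{Kaskosz} (concatenation plus $d_f$-density), and then apply Corollary \ref{MP_INF_GAP}, reading off (i)--(iv) from Definition \ref{abextremalityn} in the coordinates of \eqref{spnc2} with $\beta=\lambda^\nu(\hat S)$ constant. The paper's proof glosses over the density verification, attributing abundance only to ``the concatenation property,'' so the passage where you explain the role of hypothesis \eqref{conic} and of a Gronwall estimate in establishing $d_f$-density is a useful filling-in of detail rather than a departure from the intended route.
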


\appendix 
\section{Appendix}


\subsection{An example  on why abundance is crucial}
\label{ex_suss} The following example,  which is due to H.J. Sussmann,\footnote{Personal communication.} shows  how  the abundance hypothesis plays  crucial for the validity of Theorem \ref{mainth}.

Consider the families 
of controls $\V\subset\W$ defined as
$$
 \mathcal{W}:= L^1([0,1],[0,5])
 \qquad\mathcal{V}:=\left\{v\in \mathcal{W}:\; \int_{0}^{1}v(s)ds \neq 1 \right\},$$
 and  the optimal control problems
\[
(P)_{\mathcal{V}}\,\left\{ \begin{array}{l}
\mathrm{Minimize}\;  y(1)\,\,\\
\mathrm{over\:  processes\:}(y, v)(\cdot) \in  W^{1,1}([0,1],\R)  \times \mathcal{V}\quad  \\
\ds\frac{dy}{ds}(s)=v(s),\qquad\mathrm{a.e.\,\, s\in [0,1]}\qquad\\
y(0)=0, \qquad y(1)=1.
\end{array}\right. ,
\]
\[
(P)_{\mathcal{W}}\,\left\{ \begin{array}{l}
\mathrm{Minimize}\;  y(1)\,\,\\
\mathrm{over\:  processes\:}(y, w)(\cdot) \in  W^{1,1}([0,1],\R)  \times \mathcal{W}\quad  \\
\ds\frac{dy}{ds}(s)=w(s),\qquad\mathrm{a.e.\,\, s\in [0,1]}\qquad\\
y(0)=0, \qquad y(1)=1.
\end{array}\right. ,
\]
The process $(\hat{y}, \hat{w})(s) := (s,1)$ is a minimizer of the extended problem $(P)_\W$, with cost equal to $1$. If we restrict the controls to the {\it original} family of controls $\mathcal{V}$, 
 the cost of the problem raises to  $+\infty$, since  every solution $y[v]$ with $v\in \V$ fails to be feasible. In other words  the process $ (\hat{y}, \hat{w})$ satisfies the infimum gap condition.

By  applying the Pontryagin's Maximum Principle to the minimizer $(\hat{y}, \hat{w})$ of $(P)_\W$, we get that  there exist multipliers $(\lambda(\cdot), \lambda_{c})\neq (0,0)$ such that 
$$\ds\frac{d\lambda}{ds}(s)\equiv 0,\qquad \lambda(s) \w\leq \lambda(s) \qquad \forall \w\in[0,5], \quad s\in[0,1].$$
In particular this implies $\lambda(s) \equiv 0$ and $\lambda_{c}>0$. Therefore,  if we set $h(y):=y$ for every $y\in\R$,
the process $(\hat{y}, \hat{w})$   turns out to be a 
{\it normal $h$-extremal}.
Therefore, in view of Corollary \ref{MP_INF_GAP} the set $\mathcal{V}$,   though being dense in $\W$,  cannot be abundant in $\W$.  As a matter of fact, one can easily find a positive integer $N$, $\delta>0$ and $N+1$ controls $w, w_1,\ldots, w_N$ for which  $\theta^{\delta}_{w, w_1,\ldots,w_N}: \Gamma_N \rightarrow \mathcal{V}$ verifying the properties of  Definition \ref{abundant} does not exist.  Indeed, consider  $\Gamma_1(=[0,1])$, 
 $w(s):=0, w_1(s):=2$, $\forall s\in [0,1]$, 
$\delta>0$, and take any  mapping $\theta^{\delta}: [0,1]\rightarrow \mathcal{V}$. In view  of Definition \ref{abundant},  one has 
$$\ds \lim_{\delta\to 0} \int_{0}^{1}\theta^{\delta}(\gamma)(s)ds =  \int_{0}^{1}  w(s)+\gamma\big(w_1(s)-w(s)\big)=  2\gamma\qquad \forall \gamma\in \Gamma_1.$$

Then, for every $\delta$ sufficiently small, either there exists a $\gamma_{\delta}\in[0,1]$ such that 
$$\int_{0}^{1}\theta^{ \delta}(\gamma_{\delta})(s)ds=1,$$
or the map $\gamma \mapsto \int_{0}^{1}\theta^{\delta}(\gamma)(s)ds$ is not continuous. Since the  former case is ruled out by the fact that  the map $\theta^{\delta}(\cdot)(s)$  has to take values in $\mathcal{V}$, the map  $\gamma \mapsto \int_{0}^{1}\theta^{\delta}(\gamma)(s)ds$ is not continuous, so providing a contradiction.

%
%

\end{document}